\documentclass[10pt,reqno]{amsart}

\usepackage{appendix}
\usepackage[
  a4paper,
  top=1.5cm,
  bottom=1.5cm,
  left=2cm,
  right=2cm,
  heightrounded,
  marginparwidth=3.5cm,
  marginparsep=.2cm,
  centering
]{geometry}

\usepackage{xcolor}
\usepackage[utf8]{inputenc}
\usepackage[
  colorlinks=true,
  hyperindex,
  pagebackref,
  linktocpage=true
]{hyperref}
\usepackage{aliascnt}
\usepackage[nameinlink,noabbrev]{cleveref}
\usepackage{amsfonts}
\usepackage{amssymb}
\usepackage{mathrsfs}
\usepackage{mathtools,enumitem}
\usepackage{microtype,etoolbox}
\hypersetup{
  colorlinks,
  linkcolor={violet!60!black},
  citecolor={cyan!60!black},
  urlcolor={orange!60!black}
}

\theoremstyle{plain}
\newtheorem{theorem}{Theorem}[section]
\newaliascnt{proposition}{theorem}
\newtheorem{proposition}[proposition]{Proposition}
\aliascntresetthe{proposition}
\newaliascnt{lemma}{theorem}
\newtheorem{lemma}[lemma]{Lemma}
\aliascntresetthe{lemma}
\newaliascnt{corollary}{theorem}

\aliascntresetthe{corollary}
\newaliascnt{conjecture}{theorem}

\aliascntresetthe{conjecture}

\theoremstyle{definition}
\newaliascnt{definition}{theorem}

\aliascntresetthe{definition}
\newaliascnt{example}{theorem}

\aliascntresetthe{example}
\newaliascnt{remark}{theorem}
\newtheorem{remark}[remark]{Remark}
\aliascntresetthe{remark}
\newaliascnt{convention}{theorem}

\aliascntresetthe{convention}
\newaliascnt{hypothesis}{theorem}

\aliascntresetthe{hypothesis}
\newaliascnt{assumption}{theorem}

\aliascntresetthe{assumption}

\numberwithin{equation}{section}
\newcommand{\Qp}{\mathbf Q_p}
\newcommand{\Zp}{\mathbf Z_p}

\newcommand{\A}{\mathbf A}
\newcommand{\Gm}{\mathbf G_m}
\newcommand{\Spa}{\operatorname{Spa}}

\newcommand{\Spec}{\operatorname{Spec}}
\newcommand{\Lie}{\operatorname{Lie}}
\newcommand{\Int}{\operatorname{Int}}
\newcommand{\Perf}{\operatorname{Perf}}
\newcommand{\Hom}{\operatorname{Hom}}
\newcommand{\rk}{\operatorname{rk}}
\newcommand{\htg}{\operatorname{ht}}
\newcommand{\coker}{\operatorname{coker}}
\newcommand{\length}{\operatorname{length}}
\newcommand{\id}{\operatorname{id}}
\newcommand{\GL}{\operatorname{GL}}

\newcommand{\pr}{\operatorname{pr}}
\newcommand{\disc}{\operatorname{disc}}
\newcommand{\Tr}{\operatorname{Tr}}
\newcommand{\diag}{\operatorname{diag}}
\newcommand{\M}{\mathcal M}
\newcommand{\FF}{\mathrm{FF}}
\newcommand{\HT}{\mathrm{HT}}
\newcommand{\dR}{\mathrm{dR}}
\newcommand{\an}{\mathrm{an}}
\newcommand{\Berk}{\mathrm{Berk}}

\makeatletter
\renewcommand\normalsize{%
  \@setfontsize\normalsize{9.5}{11.2}%
  \abovedisplayskip 3.5pt plus .5pt minus .5pt
  \belowdisplayskip \abovedisplayskip
  \abovedisplayshortskip 1.5pt plus .5pt
  \belowdisplayshortskip 2.5pt plus .5pt minus .5pt
}
\newcommand{\tightenspace}[3]{%
  \patchcmd{#1}{#2}{#3}{}{\errmessage{Spacing patch failed}}}
\tightenspace{\@maketitle}{\topskip42\p@}{\topskip20\p@}
\tightenspace{\@maketitle}{\dimen@34\p@}{\dimen@18\p@}
\tightenspace{\@setauthors}{\@topsep30\p@}{\@topsep20\p@}
\tightenspace{\@setabstracta}{\skip@20\p@}{\skip@15\p@}
\tightenspace{\@starttoc}{\linespacing\@plus\linespacing}{6\p@\@plus\p@}
\tightenspace{\@starttoc}{.5\linespacing}{3\p@}
\tightenspace{\@starttoc}{32\p@\@plus14\p@}{10\p@\@plus\p@}
\renewcommand{\section}{\@startsection{section}{1}%
  \z@{7pt plus 1pt minus 1pt}{3pt}%
  {\normalfont\scshape\centering}}
\renewcommand{\subsection}{\@startsection{subsection}{2}%
  \z@{4pt plus 1pt minus 1pt}{-.4em}%
  {\normalfont\bfseries}}
\renewcommand{\subsubsection}{\@startsection{subsubsection}{3}%
  \z@{4pt plus 1pt minus 1pt}{-.4em}%
  {\normalfont\itshape}}
\def\thm@space@setup{%
  \thm@preskip=3.5pt plus .5pt minus .5pt
  \thm@postskip=\thm@preskip}
\expandafter\tightenspace\csname\string\proof\endcsname
  {\topsep6\p@\@plus6\p@}{\topsep3\p@\@plus.5\p@}
\makeatother
\normalsize
\setlist{topsep=3pt,itemsep=0pt,parsep=0pt,partopsep=0pt}
\begin{document}

\title{Steinness of the Basic $\mathrm{GL}_n$ Local Shimura Tower in Odd Rank}
\author{Jiawei Yang}
\address{Xiamen University, 422 South Siming Road, Siming District,
Xiamen, Fujian 361005, China}
\email{yangjw@stu.xmu.edu.cn}
\date{}

\begin{abstract}
We prove Steinness over $\breve{\Qp}$ for the basic local Shimura tower attached to $G=\mathrm{GL}_n,\;\mu=(1,1,0^{n-2}),$ 
with basic Newton slope $2/n$, for every odd $n\ge3$, at every finite level and for every prime $p$. The proof proceeds by constructing global analytic 
functions from the crystalline period map, proving compactness of their sublevel sets via a perfectoid normalization of Tate 
lattices, and showing that the resulting function map to affine space is finite. The key geometric inputs are the Fargues--Fontaine 
realization of the universal cover, a normalized determinant on the rank-two locus, and an integral PEL realization over
$W(\overline{\mathbf F}_p)$ used to produce bounded global Hodge generators. The integral constructions include the prime $p=2$.
\end{abstract}

\hypersetup{
 pdftitle={Steinness of the Basic GLn Local Shimura Tower in Odd Rank},
 pdfauthor={Jiawei Yang},
 pdfsubject={Local Shimura varieties and Stein exhaustion},
 pdfkeywords={local Shimura varieties, Rapoport-Zink spaces, Stein,
 perfectoid spaces, Fargues-Fontaine curve}
}
\maketitle
\tableofcontents

\section{Introduction}

A conjecture of Hansen--Scholze predicts that local Shimura varieties are Stein. It is formulated explicitly in
\cite[Conjecture~1.10]{HanSC}, following discussions between Hansen and Scholze. The known cases listed there include the
Lubin--Tate and Drinfeld towers, $\mu$-ordinary local Shimura data, and some Hodge--Newton reducible cases obtained from them.
Hansen singles out the basic $\mathrm{GL}_5$ datum with cocharacter $(1,1,0,0,0)$ as a difficult test case in
\cite[\S1.3]{HanSC} and returns to it in his 2023 lecture \cite{Han23}.

One motivation for the conjecture is its relation to cohomological vanishing in the local Langlands program.
As explained in \cite[\S1.3]{HanSC}, Steinness and Artin vanishing would give vanishing below the middle degree for the
supercuspidal cohomology considered there. Combined with duality when the associated Fargues--Scholze parameter is
supercuspidal, this would remove the global uniformization hypothesis in \cite[Theorem~1.1]{HanSC}.
More recently, Koshikawa and Shin observed that the Stein conjecture implies the lower bound in their conjectural
cohomological range for local shtuka spaces when $\mu$ is minuscule and $b$ is basic \cite[Remark~6.3.2]{KS26}.
These connections make Steinness relevant both to the global analytic geometry of local Shimura varieties and to the
cohomological degrees in which their representations occur.

We prove Steinness for the corresponding family in odd rank at every finite level and 
for every prime, including $p=2$. More strongly, each fixed-height hyperspecial component admits a finite morphism to an analytic 
affine space. The proof combines the Scholze--Weinstein description of infinite-level Rapoport--Zink spaces \cite{SW13,SW20} with 
integral Hodge sections and a uniform estimate for Tate lattices.

Let $n=2\ell+1\ge3$ be odd, let $p$ be a prime, put $k=\overline{\mathbb F}_p$ and $K=W(k)[1/p]=\breve{\mathbb Q}_p$, and consider
\[
G=\mathrm{GL}_n,\qquad \mu=(1,1,0^{n-2}),\qquad \nu_b=(2/n,\ldots,2/n),
\]
with $b$ basic. For a compact open subgroup $U\subset\mathrm{GL}_n(\mathbb Q_p)$, let $M_U$ be the corresponding local Shimura 
variety over $K$. Write $U_0=\mathrm{GL}_n(\mathbb Z_p)$ and denote by $M_{U_0}^{(h)}$ the quasi-isogeny locus of height $h$,
with the convention fixed in Section~\ref{subsec:framing}.

For an analytic domain $V\subset Y$, write $\Int_Y(V)$ for its interior in the Berkovich topology of $Y$; the subscript is omitted when the ambient space is clear.

\begin{theorem}\label{thm:main}
For every prime $p$, every odd $n\ge3$, and every compact open subgroup $U\subset\mathrm{GL}_n(\mathbb Q_p)$, the space $M_U$ admits an admissible exhaustion by affinoid domains
\[
M_U=\bigcup_{m\ge0}X_m,\qquad X_m\subset\operatorname{Int}_{M_U}(X_{m+1}),
\]
such that $X_m$ is a Weierstrass domain in $X_{m+1}$. In particular, each restriction map $\mathcal O(X_{m+1})\to\mathcal O(X_m)$ 
has dense image, and $M_U$ is Stein. The same conclusion holds for each locus of fixed height.
\end{theorem}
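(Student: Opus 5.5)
We will derive the theorem from the stronger statement in the introduction: for each height $h$, the hyperspecial component $M_{U_0}^{(h)}$ admits a finite morphism $F\colon M_{U_0}^{(h)}\to\A^{N,\an}_K$. Granting this, take $X_m=F^{-1}(\mathbf B(p^{-m}))$, the preimage of the closed polydisc of radius $|p|^{-m}$. Since $F$ is finite, each $X_m$ is affinoid. Moreover $X_m$ is the Weierstrass domain in $X_{m+1}$ cut out by $|p^mF_i|\le1$. The closed polydisc $\mathbf B(p^{-m})$ lies in the Berkovich interior of $\mathbf B(p^{-m-1})$, and preimages under a continuous map preserve this, so $X_m\subset\Int(X_{m+1})$. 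Density of the restriction maps is automatic for Weierstrass domains. Because $M_{U_0}$ is the disjoint union of the $M_{U_0}^{(h)}$, we exhaust it by finite unions over $|h|\le m$. For general $U$, first shrink to a normal open subgroup $U'\subset U\cap U_0$. The transition map $M_{U'}\to M_{U_0}$ is finite étale, so pulling back $X_m$ gives affinoid Weierstrass exhaustions (using the same functions). Then $M_U=M_{U'}/(U/U')$, and the quotient of an affinoid by a finite group is affinoid. The Weierstrass property descends if we use the invariant functions, namely the symmetric functions of the $F_i$ pulled back, which are already invariant. So everything reduces to level $U_0$ and fixed height.

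To build $F$, we use the Scholze--Weinstein description of the infinite-level space as an open subspace of $\widetilde{\mathcal M}\subset (\widetilde{H})^n$, where $\widetilde H$ is the universal cover, realized via the Fargues--Fontaine curve. The crystalline period map $\pi\colon M\to\mathcal F\ell$ lands in $\mathrm{Gr}(2,n)$. We take the following global functions:
\begin{itemize}
\item the Plücker coordinates of the Hodge filtration, normalized against bounded global Hodge generators. These generators come from the integral PEL realization over $W(k)$; it gives integral sections of $\Lie$ and of its dual on the whole formal model, so the coordinates are honest analytic functions rather than meromorphic ones.
\item a normalized determinant on the rank-two locus. Concretely, we take the $2\times2$ minors of the pairing between Tate module basis vectors and the Hodge generators, rescaled by a fixed power of $p$ depending on $h$ so that they are invariant under $U_0$.
\end{itemize}
Odd $n$ enters here. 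Since $\gcd(2,n)=1$, the basic isocrystal is simple, and a single determinant can be normalized to be $\mathcal O_D^\times$-equivariant up to units.

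The key step is \textbf{properness}: for each $r$, the locus $\{x:|F_i(x)|\le r\ \forall i\}$ must be quasi-compact. We would argue at infinite level on perfectoid points. A point gives a Tate lattice $T\subset\widetilde H(C)$. A bound on the functions bounds the Hodge–Tate images of a basis of $T$. By the perfectoid normalization, this forces the quasi-logarithm valuations of the basis vectors to lie in a compact range, and hence forces the point into a quasi-compact subset of the generic fibre of the formal Rapoport--Zink space at height $h$. This uniform estimate for Tate lattices is where we expect the main obstacle. The difficulty is making the bound uniform on the non-quasicompact base, since the Gross–Hopkins fibres are infinite. We would first try to exploit $\mathcal O_D^\times$-equivariance, together with the fact that the period map is étale with fibres $\GL_n(\Qp)/U_0$-torsors over the image. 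The aim is to reduce the estimate to compactness of the flag variety plus a bound on the lattice index coming from the determinant.

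Finally, \textbf{quasi-finiteness}: $F$ has finite fibres. The Plücker coordinates determine the period point, and the normalized determinant separates the finitely many, up to height, lattices in each period fibre that have bounded norm. A proper quasi-finite map of rigid spaces is finite, so $F$ is finite and the theorem follows by the first paragraph.
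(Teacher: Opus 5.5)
Your outer reduction matches the paper: preimages of closed polydiscs under a finite map are affinoid, the inequalities $|p^mF_i|\le1$ make $X_m$ Weierstrass in $X_{m+1}$, and continuity gives the interior inclusion. But the theorem rests on finiteness of the function map, and there the proposal has a genuine gap. Properness is only named as ``the main obstacle'', and the strategy you sketch for it cannot work.

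Compactness of the flag variety plus a bound on the lattice index from a determinant does not confine a Tate lattice. Over a fixed period point, the fixed-height fibre consists of all lattices in $V\simeq\Qp^n$ of fixed determinant valuation, for instance $\diag(p^{-k},p^{k},1,\ldots,1)\Zp^n$ for every $k$. So you need an invariant that controls the \emph{largest} elementary divisor. Nor do your functions bound the individual quasi-logarithms of a Tate basis. What the Hodge height controls, through $Q=JA_{\HT}$, Jacobi's complementary-minor identity and integrality of the Hodge--Tate map, is only $\Delta_2(Q)\le p^hH_{\mathrm{Pl}}$ (\cref{lem:complementary}).

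The paper turns this into a lattice bound using two ingredients absent from your sketch.
\begin{itemize}
\item Stability of $\mathcal O((n-2)/n)$ shows that $\ell+1$ $\Qp$-independent sections have evaluation rank at least $2$. This, rather than any $\mathcal O_D^\times$-equivariance, is how $\gcd(n,2)=1$ is used. It gives a uniform lower bound $c_B$ for the largest minor in columns $\{1,j\}$, $2\le j\le\ell+1$, on the compact set $K_B$, which contains every basis of the canonical lattice $V\cap B(C')$ in an affinoid perfectoid ball.
\item The normalized determinant $D_{\FF}$, defined by $\det f_x=t^{n-2}\varepsilon_TD_{\FF}(x)$ on the rank-$\le2$ locus $Z$, is bounded on $Z\cap B^n$. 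It has constant valuation on each height component, by geometric connectedness.
\end{itemize}
Write $\Lambda=\bigoplus_i\Zp p^{-a_i}y_i$ with $a_1\ge\cdots\ge a_n$ and $S=\sum_ia_i$. The first ingredient bounds $a_1+a_{\ell+1}$ from above, and the second bounds $S$ from below. The identity available for $n=2\ell+1$ gives $a_1\le(\ell+1)(a_1+a_{\ell+1})-S$, so $a_1$ is bounded. Hence every point of a Hodge sublevel lies in the image of one quasi-compact perfectoid space $Y_{N,\nu_h}$, which gives compactness. Without an argument of this kind, neither finiteness of $F$ nor the theorem is established.

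Several other steps also need repair.
\begin{itemize}
\item Your second family of functions is not well defined. The $2\times2$ minors of the Hodge--Tate pairing in a Tate basis live at infinite level and transform by $\bigwedge^2g$ under $g\in U_0$, so no rescaling by a power of $p$ makes them descend to $M_{U_0}$. In the paper $D_{\FF}$ is an auxiliary invariant, not a coordinate of $\Psi$. The coordinates are $F_{iab}=\sigma_i\ell_a^{w-1}\ell_b$, with $\sigma_i\in\Gamma(X,\lambda^w)$ obtained from ampleness of $\omega_{\mathrm{ab}}$ on the minimal compactification. They also need the norm-one trivialization of $\det D_\eta$ from \cref{prop:detnorm}, because $\kappa^*\omega_{\mathrm{ab}}=\lambda^2(\det D)^{-1}$.
\item The determinant cannot separate points of a period fibre, since its valuation is constant on a height component. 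Discreteness of the fibres comes from $F_{iab}/F_{iaa}=\ell_b/\ell_a$ recovering the \'etale period map, and finiteness of the fibres then follows from compactness.
\item If $U\not\subset U_0$, the pullbacks of the $F_i$ to $M_{U'}$ need not be $U/U'$-invariant, so your descent step is wrong as stated. It can be repaired with symmetric functions of the $U/U'$-translates. The paper instead conjugates $U$ into $U_0$, uses the Hecke isomorphism $M_U\simeq M_{g^{-1}Ug}$, and pulls back along the finite \'etale map to $M_{U_0}$.
\item When you take finite unions of height components, you need an idempotent inequality, as in the paper, to exhibit each stage as a Weierstrass domain in the next.
\end{itemize}
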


\begin{theorem}\label{thm:finiteintro}
For every prime $p$, every odd $n\ge3$, and every height $h$, there are finitely many global analytic functions $F_1,\ldots,F_d$ on $X=M_{U_0}^{(h)}$ 
for which
\[
\Psi=(F_1,\ldots,F_d): X\longrightarrow(\mathbb A_K^d)^{\mathrm{an}}
\]
is finite.
\end{theorem}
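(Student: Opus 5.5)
We would prove \Cref{thm:finiteintro} by realizing $X=M_{U_0}^{(h)}$ as a quotient of the infinite-level space and pulling back coordinate functions from the crystalline period map. By Scholze--Weinstein, $M_\infty$ embeds into $\widetilde{\mathcal G}^{n}_{\eta}\times \mathrm{Gr}$, i.e.\ a point of $M_\infty$ is an $n$-tuple $(s_1,\dots,s_n)$ in the universal cover $\widetilde{\mathcal G}\cong \operatorname{Spa}$ of the Banach--Colmez space $B^{\varphi^n=p^2}$-type (via the Fargues--Fontaine realization), cut out by the condition that the $s_i$ span a lattice whose image in the Lie algebra has rank $n-2$. The group $\mathrm{GL}_n(\mathbf Z_p)$ acts on the tuple, and the functions we want are $\mathrm{GL}_n(\mathbf Z_p)$-invariant functions built from the quasi-logarithm/period map. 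Concretely, first choose global Hodge generators: using the integral PEL realization over $W(k)$ (valid also for $p=2$), produce bounded sections of the Hodge filtration whose exterior powers give, on the rank-two locus, a normalized determinant $\delta$, a global analytic function invariant under $\mathrm{SL}_n(\mathbf Z_p)$ and transforming by $\det$ under $\mathrm{GL}_n(\mathbf Z_p)$; taking suitable power sums or norms over $\mathbf Z_p^\times/(1+p\mathbf Z_p)$-orbits descends these to level $U_0$. Together with the pullbacks of the coordinates of the Grothendieck--Messing period map $\pi_{\mathrm{GM}}:X\to \mathrm{Gr}(2,n)^{\an}$ composed with a fixed projective embedding, and then with coordinates expressed in terms of bounded sections, we obtain finitely many $F_1,\dots,F_d\in\mathcal O(X)$.

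The key steps, in order, are: (1) define $F_1,\dots,F_d$ as above and check they are genuinely analytic on $X$ (descent from infinite level through the pro-étale $\mathrm{GL}_n(\mathbf Z_p)$-torsor); (2) prove that for each $r$, the sublevel set $X_{\le r}=\{x:|F_i(x)|\le r\ \forall i\}$ is quasicompact; (3) show $\Psi$ is quasi-finite, using that the period map has discrete fibres (the fibres are $J_b(\mathbf Q_p)/\text{stabilizer}$-orbits, and fixed height plus boundedness of the $F_i$ leaves finitely many points); (4) conclude finiteness: a quasi-finite, partially proper (boundary-free, since $X$ is smooth and without boundary over $K$) morphism whose preimages of closed polydiscs are quasicompact is proper, and proper plus quasi-finite implies finite for rigid spaces.

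The main obstacle is step (2). Here we would use a perfectoid normalization of Tate lattices: on a point of $X_{\le r}$ over a perfectoid field $C$, the Tate module $T\subset \widetilde{\mathcal G}(C)$ is a $\mathbf Z_p$-lattice, and the bounds on $F_i$ (in particular the lower bound on $|\delta|$ implicit in $|\delta^{-1}|$ being among the functions, or via a function inverting $\delta$) should force a uniform estimate: after acting by $\mathrm{GL}_n(\mathbf Z_p)$ one can choose a basis $s_1,\dots,s_n$ whose images in $B^+_{\mathrm{cris}}$ lie in a fixed bounded set depending only on $r$ and $h$. Such bounded tuples form a quasicompact subset of $\widetilde{\mathcal G}^n$ (the analogue of a closed ball in the Banach--Colmez space), and its image in $X$ is quasicompact. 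The oddness of $n$ enters here: with slope $2/n$ and $\gcd(2,n)=1$ the isocrystal is simple, so $J_b$ is a division algebra and height is the only discrete invariant, which is what makes a uniform lattice estimate possible on a single height component. Finally, \Cref{thm:main} follows by taking $X_m=\Psi^{-1}(\text{polydisc of radius }p^m)$ and pulling back along finite étale level maps.
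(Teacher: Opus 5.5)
Your outline has the same skeleton as the paper: Hodge generators times Pl\"ucker coordinates, compact sublevels via normalized Tate lattices, discrete fibres from the \'etale period map, and Berkovich properness. But step (2), which is essentially the whole theorem, is only asserted. ``Should force a uniform estimate'' supplies no mechanism, and the reason you give for uniformity is not the operative one. The fixed-height locus is a single component for every $n\ge3$, even or odd, by Hodge--Newton irreducibility. What $\gcd(n,2)=1$ actually buys is stability of $\mathcal B\simeq\mathcal O((n-2)/n)$.

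The paper needs three separate estimates.
\begin{enumerate}
\item[(a)] \emph{Translating the sublevel condition to a Tate frame.} One has $Q=JA_{\HT}$, with $A_{\HT}$ integral by Cartier duality. Jacobi's complementary-minor identity writes the $2\times2$ minors of $J$ as Pl\"ucker coordinates divided by $\det\alpha$, whose norm is $p^{-h}$. Hence $\Delta_2(Q)\le p^hH_{\mathrm{Pl}}$ (\cref{lem:complementary}).
\item[(b)] \emph{A uniform lower bound on a normalized frame.} An upper bound on minors is useless without this. The paper compares $\Lambda$ with $L_0=V\cap B(C')$ for an invariant affinoid perfectoid ball $B$, and every basis of $L_0$ lies in the compact set $K_B$. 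Stability gives that nonzero sections have nonzero evaluation at $\infty$, and that any $\ell+1$ independent ones have evaluation rank $\ge2$ (\cref{lem:evaluationrank}). This yields $c_B>0$ bounding from below some minor in columns $\{1,j\}$ with $j\le\ell+1$. For the elementary divisors $\Lambda=\bigoplus_i\Zp p^{-a_i}y_i$, this controls only $a_1+a_{\ell+1}$.
\item[(c)] \emph{A lower bound on $\sum_ia_i$.} This needs the invariant $D_{\FF}=\det f_x/(t^{n-2}\varepsilon)$ on the rank-$\le2$ locus. It is bounded above on $Z\cap B^n$, and its valuation is constant on the height-$h$ component, by geometric connectedness via Chen--Kisin--Viehmann (\cref{lem:nh}).
\end{enumerate}
Only together do these give $a_1\le(\ell+1)\log_p(R/c_B)+A_B+\nu_h$ when $\Delta_2\le R$.

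Even then, a ``bounded set of tuples'' is not a quasi-compact source. The set $(p^{-N}B)^n\cap Z[D_{\FF}\ne0]$ is merely open in the affinoid perfectoid $(p^{-N}B)^n\cap Z$. The paper obtains the quasi-compact $Y_{N,\nu_h}$, with an actual map to level $U_0$, only by cutting with the norm shell $|D_{\FF}|=p^{-\nu_h}$ and using $Z[D_{\FF}\ne0]\simeq\M_{\infty,C}$ (\cref{prop:ZequalsMinfty}).

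Your construction of the functions also breaks at a point that matters. The paper's norm-one $\delta$ is a trivialization of $\det D_\eta$ at level $U_0$. It exists because $|\det\alpha|\equiv p^{-h}$ on the height-$h$ component (\cref{prop:detnorm}). It turns the PEL generators of $\lambda^{2m}(\det D)^{-m}$ into sections $\sigma_i\in\Gamma(X,\lambda^{2m})$ with $\max_i\|\sigma_i\|=1$ \emph{exactly}. It is this lower normalization, not mere boundedness, that gives $\max|F_{iab}|=H_{\mathrm{Pl}}^w$, so that bounding $\Psi$ bounds $H_{\mathrm{Pl}}$.

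You conflate this $\delta$ with the infinite-level determinant $D_{\FF}$, and your descent of the latter fails. Since $D_{\FF}(xg)=\det(g)D_{\FF}(x)$, norms over $\Zp^\times/(1+p\Zp)$ do not kill the action of $1+p\Zp$. No such function is needed anyway, because $|D_{\FF}|$ is already constant on $X$.

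Smaller inaccuracies:
\begin{itemize}
\item The fibres of the period map are lattice sets $\GL_n(\Qp)/\GL_n(\Zp)$, not $J_b(\Qp)$-orbits. Discreteness comes from $F_{iab}/F_{iaa}=\ell_b/\ell_a$ and \'etaleness.
\item Absence of boundary comes from partial properness, not smoothness.
\item The defining condition for $\M_\infty$ is that the quasi-logarithm matrix has rank $2$ (cokernel of rank $n-2$), not that the image in $\Lie$ has rank $n-2$.
\end{itemize}
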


\smallskip\noindent\textit{Outline of the proof.} We use the moduli problem of Section~\ref{subsec:framing}, 
whose framing $p$-divisible group $\mathbb H$ has height $n$ and dimension $n-2$. The crystalline period map has Pl\"ucker 
coordinates $\ell_a\in\Gamma(X,\lambda^{-1})$, where $\lambda=\det\omega_{\mathcal H}$, and we put 
$H_{\mathrm{Pl}}=\max_a\|\ell_a\|$ using the integral model metric. An integral realization in a split unitary PEL moduli space,
together with \cite{VW13,Lan21}, provides Hodge generators. After normalizing the determinant of the crystalline framing, 
we obtain sections $\sigma_i\in\Gamma(X,\lambda^w)$ for an even $w\ge2$, with $\max_i\|\sigma_i(x)\|=1$. Consequently,
\[
F_{iab}=\sigma_i\ell_a^{\,w-1}\ell_b, \qquad \max_{i,a,b}|F_{iab}(x)|=H_{\mathrm{Pl}}(x)^w.
\]
The ratios $F_{iab}/F_{iaa}=\ell_b/\ell_a$ recover the local coordinates of the \'etale period map, so the resulting function map has discrete geometric fibres.

The main difficulty is compactness of Hodge sublevels. Fix a complete algebraically closed extension $C/K$. The universal cover 
of $\mathbb H$ is described by sections of the stable Fargues--Fontaine bundle $\mathcal B=\mathcal O((n-2)/n)$, with quasi-logarithm 
given by evaluation at the untilt divisor. Stability of the bundle implies that every nonzero global section has nonzero evaluation, 
while any $\ell+1$ global sections linearly independent over $\Qp$ have evaluations of rank at least two. These estimates yield a uniform lower bound for selected two-by-two minors 
on a compact set of normalized frames.

A second invariant is obtained on the locus of rank at most two by dividing the determinant of $\mathcal O^n\to\mathcal B$ by 
the $(n-2)$-th power of the section defining the untilt divisor. Its nonvanishing locus identifies actual infinite-level objects, and 
its valuation is constant above each hyperspecial height component, using \cite{CKV15,CKVcorr}. Together with the  
bound of the selected minors, it controls the elementary divisors of an actual Tate lattice relative to its normalization in a fixed affinoid perfectoid 
ball. The resulting bound is uniform in the deformation, the period point, and complete algebraically closed extensions of $C$.

Jacobi's complementary-minor identity and the integral Hodge--Tate map bound the quasi-logarithmic minors by 
$p^hH_{\mathrm{Pl}}$. The lattice estimate therefore places all points of a fixed Hodge sublevel in the finite-level image of 
a single quasi-compact perfectoid space. Continuity of $H_{\mathrm{Pl}}$ then gives that the sublevel is compact in the Berkovich 
topology. The norm identity above makes $\Psi_C$ topologically proper; partial properness of the source supplies the absence of 
boundary. Its zero-dimensional fibres imply finiteness, which descends to $K$ \cite{CT21,Zav25}.

The inverse images of expanding closed polydiscs under $\Psi$ form a Weierstrass exhaustion. Finite \'etale pullback and a diagonal exhaustion of the height loci give the assertion at every finite level.

\smallskip\noindent\textit{Organization.} Section~\ref{sec:hodge} constructs the global functions. Sections~\ref{sec:universalcover}--\ref{sec:uniform} establish compact normalization, the determinant invariant, and the uniform lattice bound. Section~\ref{sec:stein} proves compactness, finiteness, and Steinness. Appendix~\ref{app:PEL} gives the integral PEL realization and Hodge generators, including at $p=2$.

\section{Hodge bundles and global functions}\label{sec:hodge}

\subsection{The framing group and its Hodge bundles}\label{subsec:framing}
Let $\sigma$ denote the Witt-vector Frobenius on $K$.  Choose a representative $b\in \mathrm{GL}_n(K)$ of the basic $\sigma$-conjugacy class.  Write $D_b=(K^n,F=b\sigma)$ for the corresponding $F$-isocrystal; it is isoclinic of $F$-slope $2/n$.

Choose a full $W(k)$-lattice $M_0\subset D_b$ such that
\[
pM_0\subset F(M_0)\subset M_0.
\]
Equivalently, $M_0$ is stable under both $F$ and $V:=pF^{-1}$.  These operators satisfy $FV=VF=p$, so $M_0$ is a Dieudonn\'e 
lattice. By covariant Dieudonn\'e theory \cite[Theorem~4.33]{CO09}, there is a $p$-divisible group $\mathbb H/k$ with integral
Dieudonn\'e module $M_0$ and rational isocrystal $D_b$. The height is $n$. 
Since $v_p(\det F)=2$ and $V=pF^{-1}$, one has $v_p(\det V)=n-2.$ In the Chai--Oort normalization, $\Lie(\mathbb H)$ is the 
cokernel of $V$, hence $\dim \mathbb H=n-2.$ We fix this integral representative and call it the framing group.

For a $p$-nilpotent $W(k)$-scheme $S$, put $\bar S=S\times_{\Spec W(k)}\Spec k.$ The Rapoport--Zink functor parametrizes pairs $(\mathcal H,\rho)$ with $\rho:\mathbb H_{\bar S}\dashrightarrow \mathcal H_{\bar S}$ a quasi-isogeny.  In the EL convention of \cite[\S6.5]{SW13}, the dimension agrees with the multiplicity of weight $0$ in $\mu$, namely $n-2$.

Let $\mathcal B:=\mathcal E(\mathbb H)$ be the Fargues--Fontaine bundle attached to $\mathbb H$ in the Scholze--Weinstein convention.  It has rank $n$ and degree $n-2$, and since $\mathbb H$ is basic it is isoclinic.  Hence
$\mathcal B\simeq \mathcal O((n-2)/n).$ Since $n$ is odd, $\gcd(n,n-2)=1$, so $\mathcal B$ is stable.

We write $X=M_{U_0}^{(h)},\; U_0=\mathrm{GL}_n(\Zp),$ for the locus on which the quasi-isogeny $\rho$ has height $h$.

Let $\mathfrak M$ be the formal Rapoport--Zink space for $\mathbb H$, with universal $p$-divisible group $\mathcal H$ and universal 
quasi-isogeny $\rho:\mathbb H_{\bar S}\dashrightarrow \mathcal H_{\bar S}.$ Let $D_{\mathcal H}$ be the de Rham realization of the 
covariant Dieudonn\'e crystal of $\mathcal H$.  Its Hodge sequence is
\[
0\longrightarrow \Omega:=\omega_{\mathcal H^\vee}
\longrightarrow D_{\mathcal H}
\longrightarrow \Lie\mathcal H
\longrightarrow0.
\tag{2.1}
\]
Set $D:=D_{\mathcal H}^\vee,\; L:=\omega_{\mathcal H}\subset D,\; \lambda:=\det L.$ Then $\rk D=n,\; \rk L=n-2,\; \rk \Omega=2,$ and $L$ and $\Omega$ are mutual annihilators under the natural perfect pairing.

Let $M(\mathbb H)$ denote the Scholze--Weinstein covariant Dieudonn\'e lattice and put $N:=\bigl(M(\mathbb H)[1/p]\bigr)^\vee.$ The crystalline realization of $\rho$ gives on the generic fibre
\[
r_\rho:N^\vee\otimes_K\mathcal O_X\xrightarrow{\sim}D_{\mathcal H,\eta}.
\]
Dualizing gives the crystalline framing
\[
\alpha:=r_\rho^\vee:D_\eta\xrightarrow{\sim}N\otimes_K\mathcal O_X.
\tag{2.2}
\]
We also write $\beta=r_\rho^{-1}=(\alpha^\vee)^{-1}$.  At infinite level we reserve 
$\gamma:\underline{\Zp}^{\,n}\xrightarrow{\sim}T_p\mathcal H$ for the integral Tate frame. Here 
$\underline{\Zp}:=\varprojlim_{m\ge1}\underline{\mathbf Z/p^m\mathbf Z}$ is the constant $p$-adic sheaf on the pro-\'etale site, 
with constant finite sheaves on the right. Thus $\gamma$ is a trivialization of the Tate-module local system of rank $n$.

The Hodge subbundle $L\subset D$, transported through $\alpha$, defines the dual form of the crystalline period map
\[
\pi:X\longrightarrow \operatorname{Gr}(n-2,N),\qquad x\longmapsto \alpha_x(L_x),
\]
which is \'etale.  Choose a $W(k)$-basis of the dual reference lattice $M(\mathbb H)^\vee\subset N$.  The $\binom n2$ Pl\"ucker coordinates pull back to sections $\ell_a\in\Gamma(X,\lambda^{-1}),\; 1\le a\le\binom n2.$ The integral model of $\lambda$ gives a continuous model metric, and we put
\[
H_{\mathrm{Pl}}(x):=\max_{1\le a\le\binom n2}\|\ell_a(x)\|>0.
\tag{2.3}
\]
In an integral local basis of $L$, this is the maximum absolute value of the coefficients of $\bigwedge^{n-2}\alpha|_L$.

\subsection{The determinant model norm}\label{subsec:detnorm}
We normalize the absolute value by $|p|=p^{-1}$.

\begin{lemma}\label{lem:unitnorm}
Let $F$ be a complete nontrivially valued nonarchimedean field, and let $\mathbb D^q=\{(T_1,\ldots,T_q):|T_i|<1\}$ be the open unit polydisc.  If $f\in\mathcal O(\mathbb D^q)^\times$ is an analytic unit, then $|f(x)|=|f(0)|$ for every Berkovich point $x\in\mathbb D^q$.
\end{lemma}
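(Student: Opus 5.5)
The plan is to reduce to the classical fact that a power series which is a unit on the open polydisc has a dominant constant term. Write $f=\sum_\nu a_\nu T^\nu$ with $g=f^{-1}=\sum_\nu b_\nu T^\nu$. Both series converge on every closed polydisc $\mathbb D^q(r)$ with $r<1$, $r\in|F^\times|^{\mathbf Q}$, or, after a harmless base change, with $r$ arbitrary. The Gauss norm $|f|_r=\max_\nu|a_\nu|r^{|\nu|}$ is multiplicative, so $|f|_r|g|_r=1$ for every $r<1$.

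\textbf{Step 1: the Gauss-norm identity.} Since $|f|_r\ge|a_0|$ and $|g|_r\ge|b_0|$, while $a_0b_0=1$, multiplicativity forces $|f|_r=|a_0|$ and $|g|_r=|b_0|$ for all $r<1$. In other words, $|a_\nu|r^{|\nu|}\le|a_0|$ for all $\nu$ and all $r<1$.

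\textbf{Step 2: the bound at points.} Let $x\in\mathbb D^q$ be any Berkovich point. Then $x$ lies in some closed polydisc $\mathbb D^q(r)$ with $r<1$, since the open polydisc is the increasing union of these. The multiplicative seminorm $|\cdot(x)|$ is bounded by the Gauss norm $|\cdot|_r$; this is the standard fact that the Gauss point is the Shilov boundary of the closed polydisc, and it follows from the ultrametric inequality together with $|T_i(x)|\le r$. Hence $|f(x)|\le|f|_r=|a_0|$, and likewise $|g(x)|\le|b_0|=|a_0|^{-1}$.

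\textbf{Step 3: conclusion.} Multiplying the two bounds, $1=|f(x)g(x)|\le|a_0|\cdot|a_0|^{-1}=1$, so both inequalities are equalities. Therefore $|f(x)|=|a_0|=|f(0)|$.

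The only point requiring care is justifying multiplicativity of the Gauss norm for radii $r$ not in the value group. I would handle this by first taking $r$ in the divisible hull $|F^\times|^{\mathbf Q}$, where one can rescale to the unit polydisc over a finite extension and use the usual Gauss lemma. Alternatively, restricting to such $r$ suffices, since these are dense and every point lies in some such polydisc. I would not use any earlier result of the paper here; the lemma is self-contained.
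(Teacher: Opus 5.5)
Your proof is correct. It shares the paper's overall strategy: exhaust the open polydisc by closed polydiscs of radius $r<1$ and read off the pointwise behaviour of $f$ from Gauss norms. The key step, however, is different.

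The paper invokes the unit criterion for Tate algebras, namely strict dominance $\|f-f(0)\|_r<|f(0)|$, and concludes at each point by the ultrametric inequality. Since that criterion is stated for the standard Tate algebra, the paper must first make $r$ a value of the absolute value by a complete scalar extension. It then has to descend the pointwise statement back to Berkovich points over $F$ by enlarging their completed residue fields.

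You use only multiplicativity of the Gauss norm applied to $f\cdot f^{-1}=1$. This gives $|f|_r=|f(0)|$ and $|f^{-1}|_r=|f(0)|^{-1}$, and you then sandwich $|f(x)|$ using the bound $|h(x)|\le|h|_r$ for both $h=f$ and $h=f^{-1}$.

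What this buys you is that the radius enters only through a statement about coefficients, which is insensitive to base change. So you need no descent for points at all. In fact, multiplicativity of $\max_\nu|a_\nu|r^{|\nu|}$ holds for every real $r>0$ by the usual leading-term argument with a monomial order, so even your detour through $|F^\times|^{\mathbf Q}$ is optional. The paper's route buys the slightly stronger pointwise conclusion $|f(x)-f(0)|<|f(0)|$, which the lemma does not need.

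One detail you should state explicitly in Step 2: the ultrametric inequality handles partial sums. Passing to the full series uses boundedness (continuity) of the seminorm at $x$ on the affinoid algebra of the closed polydisc.
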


\begin{proof}
Fix $0<r<1$ and restrict $f$ and $f^{-1}$ to the closed polydisc $\mathbb D_r^q:=\{x\in\mathbb D^q:\max_{1\le i\le q}|T_i(x)|\le r\}$ of radius $r$.  After a complete scalar extension, if necessary, we may assume that $r$ lies in the value group.  Rescaling the coordinates identifies the function algebra with a Tate algebra.  By the unit criterion for Tate algebras \cite[\S1.2, Corollary~4]{Bos08}, the constant term strictly dominates the nonconstant terms in the Gauss norm, so $\|f-f(0)\|_r<|f(0)|.$ Hence, for every $x\in\mathbb D_r^q$, $|f(x)-f(0)|<|f(0)|,$ and the ultrametric inequality gives $|f(x)|=|f(0)|$.

For $x\in\mathbb D_r^q$, let $\kappa(x)$ be its residue field and let $\widehat{\kappa(x)}$ be the completion for the absolute value defined by $x$.  Analytic functions evaluated at $x$ take values in this completed residue field.  To descend from the scalar extension, choose a complete valued extension $E/\widehat{\kappa(x)}$ containing an element of absolute value $r$.  The induced $E$-valued point $x_E$ has the same absolute values on analytic functions defined over $F$.  The equality over $E$ therefore gives the equality at $x$.  Every point of the open unit polydisc lies in some $\mathbb D_r^q$ with $r<1$.
\end{proof}

\begin{lemma}\label{lem:heightdet}
Let $f:G_1\to G_2$ be an isogeny of $p$-divisible groups over $k$, and put $a:=\htg(f)=\log_p\rk(\ker f).$ Let $M(G_i)$ denote the integral covariant Dieudonn\'e modules over $W(k)$.  Then
\[
\length_{W(k)}\coker M(f)=a,
\qquad
v_p(\det M(f))=a.
\]
For the quasi-isogeny $\rho_z:\mathbb H\dashrightarrow \mathcal H_z$ at a closed point $z$ of the height-$h$ locus in the special fibre of $\mathfrak M$, one has 
$v_p\bigl(\det M(\rho_z)\bigr)=h,$ where the determinant is computed in the chosen integral lattices.
\end{lemma}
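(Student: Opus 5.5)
The plan is to reduce to a length computation for a map of $W(k)$-modules, and then identify that length with the rank of the finite kernel via the exactness of Dieudonné theory on finite group schemes.

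\textbf{Step 1: linear algebra.} $M(f):M(G_1)\to M(G_2)$ is an injective map of free $W(k)$-modules of equal rank $n$, since an isogeny becomes an isomorphism after inverting $p$. By the elementary divisor theorem over the DVR $W(k)$, choose bases in which $M(f)=\diag(p^{e_1},\ldots,p^{e_n})$. Then
\[
\length\coker M(f)=\sum e_i=v_p(\det M(f)).
\]
So the two equalities are equivalent, and it suffices to prove the length statement.

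\textbf{Step 2: kernel versus cokernel.} The short exact sequence $0\to K\to G_1\to G_2\to 0$ with $K=\ker f$ finite gives, by exactness of covariant Dieudonné theory \cite{CO09}, an exact sequence
\[
0\to M(G_1)\to M(G_2)\to M(K)\to 0.
\]
Here $M(K)$ is the Dieudonné module of the finite group scheme $K$. The sign conventions for which side the finite module appears on are the point I would check against \cite{CO09}. Either way, $\coker M(f)\cong M(K)$ up to the convention.

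\textbf{Step 3: length of $M(K)$.} The classical fact is $\length_{W(k)}M(K)=\log_p\rk K$. I would prove it by dévissage. A filtration of $K$ by finite subgroup schemes with simple subquotients $\mathbf Z/p$, $\mu_p$ and $\alpha_p$ reduces the claim, by additivity of both sides in exact sequences, to these three simple cases. Each of them has a Dieudonné module equal to $k$, of length $1$. This gives $\length\coker M(f)=a$. If one prefers to avoid dévissage, factor $f$ through multiplication by $p$ instead. Write $p^N=g\circ f$; then $M(p^N)$ has determinant $p^{Nn}$, and $\rk\ker p^N=p^{Nn}$. Additivity of heights and of valuations of determinants under composition then reduces the claim to multiplication by $p$, where it is immediate, provided one also checks the case of $g$. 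That check is circular, however, so dévissage is the real argument.

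\textbf{Step 4: the quasi-isogeny.} Choose $N$ so that $p^N\rho_z$ is an isogeny $\mathbb H\to\mathcal H_z$. It has height $h+Nn$, because $p^N$ has height $Nn$ on a height-$n$ group and heights add under composition. Its determinant has valuation $v_p(\det M(\rho_z))+Nn$. Subtracting $Nn$ gives $v_p(\det M(\rho_z))=h$. This is independent of the chosen integral bases, since changes of basis have unit determinant.

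\textbf{Main obstacle.} The only delicate point is the normalization in Step 2: the direction of the exact sequence and the covariant versus contravariant convention. A mismatch could flip the sign of $h$, which matters for the height convention of Section~\ref{subsec:framing}. I would fix it by testing on $f=p$, where $M(p)=p\cdot\id$ has $v_p(\det)=n=\htg(p)$. This confirms the sign is positive for honest isogenies, and hence, by Step 4, for quasi-isogenies with the stated height convention.
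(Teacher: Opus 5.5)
Your proof is correct and follows the paper's argument: exactness of covariant Dieudonn\'e theory identifies $\coker M(f)$ with the Dieudonn\'e module of $\ker f$, elementary divisors equate its length with $v_p(\det M(f))$, and clearing denominators by a power of $p$ handles the quasi-isogeny $\rho_z$. Your d\'evissage to $\mathbf Z/p$, $\mu_p$, $\alpha_p$ only fills in the standard fact that the length is $\log_p$ of the rank, which the paper states without proof.
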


\begin{proof}
Exactness of covariant Dieudonn\'e theory identifies the cokernel of $M(f)$ with the finite Dieudonn\'e module attached to $\ker f$; see \cite[Proposition~4.53(ii)]{CO09}.  A finite group scheme of rank $p^a$ corresponds to a finite Dieudonn\'e module of $W(k)$-length $a$.  Thus $\length_{W(k)}\coker M(f)=a.$ Put $r=\htg(G_1)=\htg(G_2)$.  By the elementary-divisor theorem, suitable integral bases give
\[
M(f)=\diag(p^{a_1},\ldots,p^{a_r}),\qquad a_i\ge0,
\]
and hence
\[
v_p(\det M(f))=\sum_i a_i=\length_{W(k)}\coker M(f)=a.
\]

For the final statement choose $e\ge0$ such that $f:=p^e\rho_z$ is an isogeny.  Since both groups have height $n$, $\htg(f)=h+ne.$ On rational Dieudonn\'e modules, $M(f)=p^eM(\rho_z).$ Therefore
\[
v_p(\det M(\rho_z))
=v_p(\det M(f))-ne=(h+ne)-ne=h.
\]
\end{proof}

We now define the model norm of $\det\alpha$.  The integral bundle $D$ is an integral model of $D_\eta$, while on the target we use the fixed lattice $N^\circ:=M(\mathbb H)^\vee\subset N.$ Let $\mathfrak U\subset\mathfrak M$ be a formal open on which $D$ is free, let $U=\mathfrak U_\eta$, and choose integral bases $d_1,\ldots,d_n$ of $D$ and $n_1,\ldots,n_n$ of $N^\circ$.  Put
\[
e_D=d_1\wedge\cdots\wedge d_n,
\qquad
e_N=n_1\wedge\cdots\wedge n_n.
\]
We declare these determinant generators to have norm $1$.  Thus, for $x\in U$ and $a,b\in\widehat{\kappa(x)}$,
\[
\|a e_D(x)\|=|a|,
\qquad
\|b e_N\|=|b|.
\]
If $A=(A_{ij})\in\GL_n(\mathcal O(U))$ is the matrix of $\alpha$ in these bases, then
\[
\det\alpha(e_D)=c e_N,
\qquad c=\det A\in\mathcal O(U)^\times,
\]
and we define
\[
|\det\alpha(x)|:=\|\det\alpha(x)\|=|c(x)|.
\]
Changing the integral determinant generators multiplies $c$ by an integral unit of absolute value $1$, so these local definitions glue to a global continuous model norm.

\begin{proposition}\label{prop:detnorm}
On the component $X=M_{U_0}^{(h)}$,
\[
|\det\alpha(x)|=\eta_h=p^{-h}\qquad(x\in X).
\tag{2.4}
\]
Consequently $\det D_\eta$ admits a global analytic trivialization of model norm $1$.
\end{proposition}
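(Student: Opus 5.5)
The plan is to prove (2.4) locally on a formal open $\mathfrak U\subset\mathfrak M$ where $D$ is free, and to split the argument into two steps. First, show that $|c(x)|$ is constant on the tube over each closed point of the special fibre. Second, compute that constant at a classical point using Lemma~\ref{lem:heightdet}.

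\textbf{Constancy on residue discs.} The formal scheme $\mathfrak M$ is formally smooth over $\operatorname{Spf}W(k)$ of relative dimension $2(n-2)$ (the Grassmannian dimension). So for a closed point $z$ of the reduced special fibre, the completed local ring is $W(k)[[T_1,\dots,T_q]]$, and the tube $]z[$ is an open unit polydisc $\mathbb D^q$ over $K$. The crystalline framing $\alpha$ is horizontal for the Gauss--Manin connection, since $r_\rho$ comes from the quasi-isogeny. Hence $c=\det A$ restricted to $]z[$ is an analytic unit on $\mathbb D^q$: it is invertible because $\alpha$ is an isomorphism on $D_\eta$. Lemma~\ref{lem:unitnorm} then gives $|c(x)|=|c(z_0)|$ for all $x\in\,]z[$, where $z_0$ is the origin. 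Changing integral bases does not affect absolute values, so this holds regardless of which $\mathfrak U$ contains $z$. Every point of $X$ lies in some tube $]z[$, via the specialization map to the special fibre of the height-$h$ open and closed formal subscheme.

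\textbf{Computing the constant.} Take the canonical lift of $\mathcal H_z$ over $W(k)$, namely the point $z_0$. Its fibre of $D$ is the dual of the evaluation of the Dieudonn\'e crystal on $W(k)$, which is $M(\mathcal H_z)^\vee$. At this point $\alpha$ is the dual of the rational Dieudonn\'e map $M(\rho_z)[1/p]$, by the crystalline nature of $r_\rho$ (Grothendieck--Messing/Dieudonn\'e compatibility). The lattices match as well: the integral basis of $D$ corresponds to $M(\mathcal H_z)^\vee$, and $N^\circ=M(\mathbb H)^\vee$. Therefore $c(z_0)=\det(M(\rho_z))^{\vee}$ up to a unit, so $v_p(c(z_0))=v_p(\det M(\rho_z))=h$ by Lemma~\ref{lem:heightdet}. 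With $|p|=p^{-1}$ this gives $|c(z_0)|=p^{-h}$.

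\textbf{Main obstacle and conclusion.} The delicate point is the identification at $z_0$. One must check that the model $D$ (the de Rham realization of the integral crystal) evaluated at a $W(k)$-point agrees with the Dieudonn\'e lattice in the same convention (covariant, Chai--Oort versus Scholze--Weinstein) used in Lemma~\ref{lem:heightdet}. One must also check that dualization does not flip the sign of the valuation. Dualizing transposes, so the determinant is unchanged and the sign is safe. A subtler variant is to avoid the lift and argue on the whole disc: the norm $|c|$ depends only on the Dieudonn\'e lattice over $\breve{\mathbb Z}_p$-points, which is the crystal evaluated on the PD-thickening. The consequence then follows: $p^{h}\,\alpha^{-1}$ applied to $e_N$ gives a global section of $\det D_\eta$. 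On each $U$ this section equals $p^h c^{-1}e_D$, which has norm $p^{-h}\cdot p^{h}=1$ everywhere and glues because $e_N$ is global. This section is the required trivialization.
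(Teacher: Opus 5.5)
Your outline follows the paper's proof: \cref{lem:unitnorm} on the deformation polydisc of a closed point, evaluation at a $W(k)$-lift via \cref{lem:heightdet}, invariance of the determinant under transposition, and normalization by $p^h$. One step is false as stated, though: the claim that every point of $X$ lies in some tube $]z[$ of a closed point $z$.

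The specialization of a Berkovich (or adic) point of $X$ need not be a closed point of $\mathfrak M_{\mathrm{red}}$. For this basic datum the reduced special fibre has dimension $(n-3)/2$. So as soon as $n\ge5$ it has positive-dimensional components. There are then points of $X$ specializing to their generic points, just as the Gauss point of a closed disc specializes to the generic point of $\mathbf A^1_k$. Such points lie in no tube of a closed point, and your argument gives no information about $|c|$ there. Only for $n=3$, where each height component has a one-point special fibre, is your covering claim true.

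The missing step is the one the paper supplies. Every \emph{classical} point of $X$ does lie in the tube of a closed point. Classical points are dense in strict affinoid charts, and $x\mapsto|c(x)|$ is a continuous model norm. Hence $|c|=p^{-h}$ extends from classical points to all analytic points. Once you add this, the rest of your argument goes through, including the construction of the norm-one section $p^h(\det\alpha)^{-1}(e_N)$.

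Two smaller points. First, at $z_0$ you appeal to ``Grothendieck--Messing'' to identify $D_{\mathcal H,z_0}$ with $M(\mathcal H_z)$ compatibly with $\rho$. For $p=2$ the divided powers on $pW_j(k)$ are not nilpotent, so Grothendieck--Messing does not literally apply. What you actually need is only the crystal property on the full crystalline site. The paper uses it on the thickenings $W_j(k)\to k$ and then passes to the inverse limit to recover exactly the lattices of \cref{lem:heightdet}. Second, the origin of your coordinates is just some $W(k)$-lift, not a ``canonical lift''. Since $|c|$ is constant on the disc, any lift works.
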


\begin{proof}
Let $z$ be a closed point of the special fibre.  Since $k$ is algebraically closed, $z$ is $k$-rational. 
The complete local deformation ring of a $p$-divisible group of height $n$ and dimension $n-2$ is $W(k)[[u_1,\ldots,u_{2(n-2)}]]$.  
Its generic fibre is the open unit $2(n-2)$-polydisc.

Choose integral bases of $D$ and $N^\circ$ on this formal neighbourhood. The crystalline framing is represented by an invertible matrix $A(u)$ of analytic functions, so $c(u)=\det A(u)$ is an analytic unit.  By \cref{lem:unitnorm}, $|c(x)|=|c(0)|$ throughout the 
deformation polydisc.

Under the chosen formal coordinates, $u_1=\cdots=u_{2(n-2)}=0$ determines a $W(k)$-valued lift $\widetilde z$ of $z$. Put
$R_j=W_j(k)=W(k)/p^j, \; \mathcal H_j=\mathcal H_{\widetilde z}\otimes_{W(k)}R_j.$ We use the covariant Dieudonn\'e crystal 
on the full crystalline site: the kernel of a divided-power thickening must be topologically nilpotent, but its divided-power 
structure need not be.  This includes the canonical thickening $\mathbf Z_2\twoheadrightarrow\mathbf F_2$; see \cite[\S3.2, 
immediately before Definition~3.2.3]{SW13}.  Thus $R_j\twoheadrightarrow k$ is allowed for every prime $p$.  Crystalline base 
change gives canonical integral identifications
\[
M(\mathcal H_z)(R_j\to k)
\simeq
M(\mathcal H_j)(R_j\xrightarrow{\id}R_j)
\simeq
D_{\mathcal H,\widetilde z}\otimes_{W(k)}R_j.
\]
They are compatible with $R_{j+1}\to R_j$, and hence
\[
\varprojlim_jM(\mathcal H_z)(R_j\to k)
\simeq
D_{\mathcal H,\widetilde z}.
\tag{2.5}
\]
Similarly,
\[
M(\mathbb H)\simeq\varprojlim_jM(\mathbb H)(R_j\to k).
\tag{2.6}
\]
Choose $e\ge0$ so that $f=p^e\rho_z$ is an isogeny.  The compatible crystalline maps induced by $f$ have inverse limit
\[
p^e r_\rho(0):M(\mathbb H)\longrightarrow D_{\mathcal H,\widetilde z}.
\]
Thus the source and target lattices are precisely those used in \cref{lem:heightdet}, and $v_p(\det r_\rho(0))=h.$ Since $\alpha=r_\rho^\vee$, its matrix in dual bases is the transpose of the matrix of $r_\rho$, so $v_p(c(0))=v_p(\det\alpha(0))=h.$ Because $|p|=p^{-1}$, $|c(0)|=p^{-h}.$ Therefore $|\det\alpha|=p^{-h}$ on the whole deformation polydisc.

Every classical point of $X$ lies in the tube of a closed special-fibre point.  Classical points are dense in strict affinoid charts, and the model norm is continuous.  Hence the equality holds at every analytic point of $X$.

Finally choose a reference volume form $\varepsilon_N\in\det N$ of norm $1$ and set $s=(\det\alpha)^{-1}(\varepsilon_N)\in\Gamma(X,\det D_\eta).$ Then $\|s\|=p^h$.  Thus
\[
\delta:=p^hs
\tag{2.7}
\]
is a nowhere-vanishing global analytic section of $\det D_\eta$ with $\|\delta\|=1$.
\end{proof}

\begin{remark}
The proposition asserts constancy of the norm, not of the analytic function itself.  For example, $1+T$ is nonconstant on the open unit disc but satisfies $|1+T|=1$ there.
\end{remark}

\subsection{Integral generators and the function map}\label{subsec:functionmap}
The Pl\"ucker coordinates $\ell_a$ are sections of $\lambda^{-1}$ rather than ordinary functions.  We first produce finitely many bounded sections of a positive power of $\lambda$, and then combine them with the $\ell_a$ to obtain global analytic functions.

\begin{proposition}\label{prop:generators}
There exist an even integer $w\ge2$ and finitely many sections $\sigma_i\in\Gamma(X,\lambda^w)$ such that
\[
\max_i\|\sigma_i(x)\|=1\qquad(x\in X).
\tag{2.8}
\]
The construction works for every prime $p$.
\end{proposition}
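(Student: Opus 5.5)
The plan is to realize $\mathfrak M$ integrally inside a split unitary PEL moduli space over $W(k)$. There the Hodge line bundle $\lambda$ extends to an integral model whose ample powers have finitely many integral global generators. We then pull these generators back to $X$.

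\textbf{Step 1 (PEL realization).} Take an imaginary quadratic field $E$ in which $p$ splits. Choose a hermitian space of signature $(n-2,2)$, or its dual normalization matching $\dim\mathbb H=n-2$. Build a unitary Shimura variety whose $p$-divisible group at a basic supersingular point decomposes as $\mathcal H\times\mathcal H^\vee$ under the splitting $\mathcal O_E\otimes\mathbf Z_p=\mathbf Z_p\times\mathbf Z_p$. The PEL moduli problem is then a product of a $\mathrm{GL}_n$ EL problem with a $\mathbf G_m$ factor. Because the datum is split at $p$ and involves no ramification, the integral model is smooth at hyperspecial level for every $p$, including $p=2$. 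Rapoport--Zink uniformization (or simply Serre--Tate together with the formal neighbourhoods of the basic locus) gives a morphism $\Theta:\mathfrak M\to\widehat{\mathcal S}$ to the completion along the basic locus. It is a formal-local isomorphism on each height component, modulo the discrete group $I(\mathbf Q)$. By construction $\Theta^*\omega_{\mathcal A}$ has $\omega_{\mathcal H}$ as a direct summand, and the complementary summand is $\omega_{\mathcal H^\vee}$-type with trivial determinant up to a fixed twist. Hence $\Theta^*\det\omega_{\mathcal A}\simeq\lambda^{\pm1}\otimes(\text{fixed line})$. Using Proposition~\ref{prop:detnorm}, the determinant $\delta$ trivializes $\det D_\eta$ with norm $1$. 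Since $\det D\simeq\det L\otimes\det\Omega^\vee$-type, this trivialization of norm $1$ lets us convert between $\lambda$ and $\det\omega_{\mathcal A}$ without changing model norms. This normalization is why $w$ must be even: squaring removes the sign ambiguity from the $\det\Omega$ twist and the choice of orientation.

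\textbf{Step 2 (integral generators).} On the minimal compactification of the integral model, the Hodge bundle $\omega=\det\omega_{\mathcal A}$ is ample integrally. For the split unitary case over $W(k)$ at every prime we use Lan's results \cite{Lan21}, together with Vollaard--Wedhorn \cite{VW13} for the structure of the basic locus. Hence for some $w_0$, and every multiple $w$, there are finitely many sections $\sigma'_i\in H^0(\mathcal S^{\min}_{W(k)},\omega^{w})$ whose reductions mod $p$ have no common zero on the special fibre; we take $w$ even. Since $\mathcal S^{\min}$ is proper, the basic locus is quasi-compact in the special fibre, so only finitely many sections are needed. Pull back $\sigma_i:=\Theta^*\sigma'_i$, transported via $\delta$ to $\Gamma(X,\lambda^w)$.

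\textbf{Step 3 (norm identity).} Integrality of the $\sigma'_i$ gives $\|\sigma_i(x)\|\le1$ for the model metric. Conversely, every point $x\in X$ specializes to a closed point $z$ of the special fibre of $\mathfrak M$. Some $\bar\sigma_i(z)\ne0$, so in an integral local generator of $\lambda^w$ near $z$ the coefficient of $\sigma_i$ is a unit in the complete local ring. Thus $|\sigma_i|=1$ on the whole tube, either by Lemma~\ref{lem:unitnorm} or simply because a function of the form unit$+$topologically nilpotent has absolute value $1$. This gives \eqref{eq:dummy} in the form $\max_i\|\sigma_i(x)\|=1$.

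The main obstacle is Step 1. We must check that the comparison between $\Theta^*\det\omega_{\mathcal A}$ and $\lambda$ is an isometry of model metrics, and not merely an isomorphism on the generic fibre. This is where the crystalline normalization of Proposition~\ref{prop:detnorm} enters. We must also make sure the integral theory of \cite{Lan21} applies at $p=2$ in the split case. I would handle the first point by working integrally on $\mathfrak M$ throughout, so that only integral determinants appear.
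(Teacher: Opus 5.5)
Your strategy matches the paper's. You pull back finitely many integral generators of a power of $\omega_{\mathrm{ab}}$ from the integral minimal compactification \cite[Theorem~7.2.4.1]{Lan21} along an integral morphism $\mathfrak M\to\widehat{\mathscr S}$, then absorb the determinant twist with the norm-one section $\delta$ of \cref{prop:detnorm}.

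\textbf{The identification in Step~1 is wrong.} The summand $\omega_{\mathcal H^\vee}=\Omega$ does not have trivial determinant up to a fixed twist. Since $L$ and $\Omega$ are mutual annihilators, $\Omega\simeq(D/L)^\vee$, so $\det\Omega\simeq\lambda\otimes(\det D)^{-1}$; your own formula $\det D\simeq\det L\otimes(\det\Omega)^{-1}$ says the same. Hence
\[
\Theta^*\det\omega_{\mathcal A}\simeq\det\bigl(\omega_{\mathcal H}\oplus\omega_{\mathcal H^\vee}\bigr)\simeq\lambda^{2}\otimes(\det D)^{-1},
\]
which is (2.9), not $\lambda^{\pm1}\otimes(\text{fixed line})$. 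This is why $w=2m$ is even: every power of $\omega_{\mathrm{ab}}$ contributes $\lambda^2$. No sign or orientation ambiguity is involved. Your explanation would fail anyway: if the exponent were $-1$, the pulled-back sections would lie in $\lambda^{-w}$, and squaring does not change that sign. The twist is also not a fixed line. It is $(\det D)^{-1}$, whose integral structure matches the constant one only after the height-dependent renormalization $\delta=p^hs$. With the corrected identity, set $\sigma_i=\Theta^*\sigma_i'\cdot\delta^m\in\Gamma(X,\lambda^{2m})$. Norms are preserved because the identification is integral and $\|\delta\|=1$.

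\textbf{The claim ``for every prime'' is not established.} You invoke Rapoport--Zink uniformization and leave $p=2$ as something to check. The paper settles it as follows:
\begin{itemize}
\item It takes $E=\mathbf Q(\sqrt{1-4p})$, so that $p$ splits and is prime to $\operatorname{Disc}$ for every $p$ (this gives $\mathbf Q(\sqrt{-7})$ at $p=2$).
\item It checks Lan's good-prime condition: $I_{\mathrm{bad}}=1$ in type A, and $\psi$ is perfect at $p$.
\item It realizes $\mathbb H\times\mathbb H^\vee$ at a $k$-point via \cite[Theorem~10.2]{VW13} and normalizes the polarization to be exactly hyperbolic.
\item It builds $\kappa$ directly: clear denominators, take the quotient by $\ker a_e$, descend the polarization without inverting $2$ (Lemma~\ref{lem:qe}), and lift by Serre--Tate.
\end{itemize}
You need this or an equivalent argument valid at $2$.

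\textbf{A smaller gap in Step~3.} A Berkovich point of $X$ need not specialize to a closed point of the special fibre; it can specialize to a non-closed point. So your tube argument only treats classical points and would need an extra density-and-continuity step. The paper's argument avoids this. Locally $\tau_i=f_ie$ with $\sum_ig_if_i=1$ and $g_i$ integral, which gives $\max_i|f_i(x)|=1$ at every point at once.
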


\begin{proof}
Theorem~\ref{thm:PEL} constructs an integral formal morphism $\kappa:\mathfrak M\longrightarrow\widehat{\mathscr S}$ to a split unitary PEL moduli space whose relevant abelian $p$-divisible group pulls back to $\mathcal H\times\mathcal H^\vee$.  Let $\omega_{\mathrm{ab}}$ denote the determinant of the Hodge bundle of the universal abelian scheme.  The Hodge sequences give
\[
\kappa^*\omega_{\mathrm{ab}}
=\det\bigl(L\oplus(D/L)^\vee\bigr)
=\lambda^2(\det D)^{-1}.
\tag{2.9}
\]
At neat level, $\omega_{\mathrm{ab}}$ extends to an ample invertible sheaf on the integral minimal compactification \cite[Theorem~7.2.4.1]{Lan21}.  Hence for some integer $m\ge1$ the power $\omega_{\mathrm{ab}}^m$ is generated by finitely many integral global sections.  Pulling them back gives integral sections $\tau_i$ generating $\lambda^{2m}(\det D)^{-m}$ on the formal model.

On the fixed-height generic fibre, let $\delta$ be the norm-one determinant trivialization of \cref{prop:detnorm}.  Define $\sigma_i:=\tau_i\delta^m\in\Gamma(X,\lambda^{2m}), \; w:=2m.$ Locally write $\tau_i=f_i e$ in an integral frame $e$ of the integral line bundle.  Since the $\tau_i$ generate, the $f_i$ generate the unit ideal.  Thus there exist integral $g_i$ with $\sum_i g_if_i=1$.  At every analytic point, integrality gives $|f_i|,|g_i|\le1$, so
\[
1=\left|\sum_i g_i(x)f_i(x)\right|\le\max_i|f_i(x)|\le1.
\]
Hence $\max_i\|\tau_i(x)\|=1$.  Since $\|\delta\|=1$, the same is true for the $\sigma_i$.
\end{proof}

For every triple $(i,a,b)$ define
\[
F_{iab}:=\sigma_i\ell_a^{w-1}\ell_b\in\Gamma(X,\mathcal O_X).
\tag{2.10}
\]
Let $d$ be the number of these functions and define
\[
\Psi=(F_{iab}):X\longrightarrow(\A_K^d)^{\an}.
\tag{2.11}\label{eq:Psi}
\]
Since $\max_i\|\sigma_i\|=1$ and $H_{\mathrm{Pl}}=\max_a\|\ell_a\|$,
\[
\max_{i,a,b}|F_{iab}(x)|=H_{\mathrm{Pl}}(x)^w.
\tag{2.12}\label{eq:functionheight}
\]
Indeed the upper bound is immediate, and equality is obtained by choosing $i$ with $\|\sigma_i(x)\|=1$ and $a$ with $\|\ell_a(x)\|=H_{\mathrm{Pl}}(x)$, then taking $b=a$.

\begin{lemma}\label{lem:functionmap}
The geometric fibres of $\Psi$ are discrete, and the differentials $dF_{iab}$ generate $\Omega^1_{X/K}$.
\end{lemma}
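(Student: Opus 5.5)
The plan is to reduce both assertions to the étaleness of the crystalline period map $\pi:X\to\operatorname{Gr}(n-2,N)$ recalled in Section~\ref{subsec:framing}, using the observation from the outline that the ratios $F_{iab}/F_{iaa}$ recover Plücker ratios. We argue locally. Fix a point $x\in X$ and choose, by \cref{prop:generators}, an index $i$ with $\|\sigma_i(x)\|=1$. Choose also an index $a$ with $\ell_a(x)\neq0$; such an $a$ exists because the Plücker coordinates have no common zero on the Grassmannian. On the open neighbourhood $V=\{\sigma_i\ne0,\ \ell_a\ne0\}$ of $x$ the function $F_{iaa}=\sigma_i\ell_a^{w}$ is a unit. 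For every $b$ we then have
\[
F_{iab}/F_{iaa}=\ell_b/\ell_a=\pi^*(p_b/p_a),
\]
where $p_b/p_a$ are the affine Plücker coordinates on the standard chart $\{p_a\ne0\}$ of the Grassmannian.

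\emph{Discreteness of geometric fibres.} Pass to $C$ and let $y$ be a point of a fibre $\Psi_C^{-1}(t)$. Choose $(i,a)$ as above at $y$; then $t_{iaa}\neq0$. On the open set $V_{ia}$ the fibre is contained in $\pi^{-1}(P)$, where $P$ is the single Plücker point with coordinates $(t_{iab}/t_{iaa})_b$. The sets $V_{ia}$ cover the fibre, so the fibre lies in a finite union of sets $\pi^{-1}(P_{ia})\cap V_{ia}$. Because $\pi$ is étale, each $\pi^{-1}(P)$ is discrete, and the fibre is therefore discrete. The essential input is that étale morphisms of rigid spaces have discrete fibres, which is standard (Huber/de Jong).

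\emph{Generation of $\Omega^1$.} On $V$ the standard affine coordinates of the chart $\{p_a\neq0\}$, namely the ratios $p_b/p_a$, have differentials that generate $\Omega^1$ of the Grassmannian. Since $\pi$ is étale, $\pi^*\Omega^1_{\mathrm{Gr}}\xrightarrow{\sim}\Omega^1_{X}$, so the differentials $d(\ell_b/\ell_a)$ generate $\Omega^1_X|_V$. Now apply the quotient rule:
\[
d(F_{iab}/F_{iaa})=F_{iaa}^{-1}\,dF_{iab}-F_{iab}F_{iaa}^{-2}\,dF_{iaa}.
\]
This expresses each $d(\ell_b/\ell_a)$ as an $\mathcal O(V)$-combination of the $dF_{iab}$. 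Since the sets $V$ cover $X$, the differentials $dF_{iab}$ generate $\Omega^1_{X/K}$ globally.

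The main point needing care is the étaleness of $\pi$ in the dual form $x\mapsto\alpha_x(L_x)$. This follows from Grothendieck–Messing together with the duality between $L$ and $\Omega$, since the dual Grassmannian is isomorphic to the usual one. The remaining subtlety in fibre discreteness is the choice of $(i,a)$ at each geometric point, and that is exactly what \eqref{eq:functionheight} and \cref{prop:generators} provide.
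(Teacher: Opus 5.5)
Your proposal is correct and follows essentially the same route as the paper. Both arguments choose $(i,a)$ with $\sigma_i(x)\ell_a(x)\ne0$ so that $F_{iab}/F_{iaa}=\ell_b/\ell_a$ are the affine Pl\"ucker coordinates, and both use the \'etaleness of $\pi$ twice: with the quotient rule to show that the $dF_{iab}$ generate $\Omega^1_{X/K}$, and to conclude that a fibre of $\Psi$, on which these ratios are constant, is locally contained in a discrete fibre of $\pi$.
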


\begin{proof}
Fix $x\in X$.  Choose $i,a$ with $\sigma_i(x)\ell_a(x)\ne0$.  After shrinking, $F_{iaa}$ is invertible and
\[
\frac{F_{iab}}{F_{iaa}}=\frac{\ell_b}{\ell_a}.
\tag{2.13}\label{eq:plucker-ratio}
\]
The right-hand side consists of the affine Pl\"ucker coordinates on the chart $\{\ell_a\ne0\}$ of $\operatorname{Gr}(n-2,N)$.  Since the period map $\pi$ is \'etale, their differentials generate $\Omega^1_{X/K}$.  Differentiating \eqref{eq:plucker-ratio} gives
\[
d\!\left(\frac{\ell_b}{\ell_a}\right)
=
\frac{F_{iaa}\,dF_{iab}-F_{iab}\,dF_{iaa}}{F_{iaa}^2},
\]
so the $dF_{iab}$ generate $\Omega^1_{X/K}$.

On a fibre of $\Psi$, all $F_{iab}$ are fixed, so the ratios in \eqref{eq:plucker-ratio} are fixed.  Hence the fibre is locally contained in a fibre of the \'etale map $\pi$, and is therefore discrete.
\end{proof}

\section{Compact normalization in the universal cover}\label{sec:universalcover}
Fix a complete algebraically closed extension $C/K$, and let $\Perf_C$ denote the category of perfectoid spaces over $\Spa(C,\mathcal O_C)$.  Choose a lift $\mathbb H_0/W(k)$ of the framing group.  Let $\mathcal U_C$ be the base change to $C$ of the analytic universal cover associated with this lift.  It is preperfectoid, and we write $\widetilde{\mathbb H}:=\widehat{\mathcal U_C}$ for its strong completion in the sense of \cite[Proposition~2.3.6]{SW13}.  Then $\widetilde{\mathbb H}$ is perfectoid and, for every $T\in\Perf_C$,
\[
\Hom_C(T,\widetilde{\mathbb H})\xrightarrow{\sim}\Hom_C(T,\mathcal U_C).
\]
By \cite[Proposition~3.1.3 and Corollary~3.1.5]{SW13}, the resulting perfectoid universal cover is independent, up to canonical isomorphism, of the chosen lift.  We use the analogous convention after every complete algebraically closed extension $C'/C$.

Let $E\mathbb H_0$ be the universal vector extension of the chosen lift.  On integral test rings, the canonical map $s_{\mathbb H_0}:\widetilde{\mathbb H}_0\to E\mathbb H_0$ sends a compatible division sequence $(x_j)$ to $\lim_j p^j y_j$, where $y_j$ lifts $x_j$ to $E\mathbb H_0$.  The composite $\log_{E\mathbb H_0}\circ s_{\mathbb H_0}$, using the crystalline identification $\Lie E\mathbb H_0[1/p]\simeq M(\mathbb H)[1/p]$, is the quasi-logarithm of \cite[Definition~3.2.3]{SW13}.  Passing to generic fibres, base changing to $C$, and pulling back to the strong completion gives, for each affinoid perfectoid $T=\Spa(R,R^+)$, the $\Qp$-linear map
\[
q_T:\widetilde{\mathbb H}(T)\longrightarrow M(\mathbb H)\otimes_{W(k)}R.
\]
This construction is independent of the lift.  Its target is free of rank $n$ over $R$.  For $x\in\widetilde{\mathbb H}(T)$, write $x=(x_0,x_1,\ldots)$ relative to the chosen lift, with $[p]x_{i+1}=x_i$, and let $\pr_i(x)=x_i$.  The Hodge quotient $M(\mathbb H)\to\Lie\mathbb H_0$ sends $q_T(x)$ to $\log_{\mathbb H_0}(x_0)$ in the rank-$(n-2)$ module $\Lie\mathbb H_0\otimes_{W(k)}R$ \cite[Lemma~3.2.5 and Remark~3.2.6]{SW13}.

Let $X_T$ be the relative Fargues--Fontaine curve attached to $T$, and let $i_T:T\hookrightarrow X_T$ be the untilt divisor.  Write $\mathcal B_T$ for the pullback of $\mathcal B\simeq\mathcal O((n-2)/n)$ to $X_T$.

\begin{proposition}\label{prop:univcover-sections}
For every affinoid perfectoid $C$-space $T$, there is a canonical isomorphism, functorial in $T$,
\[
\widetilde{\mathbb H}(T)\xrightarrow{\sim}H^0(X_T,\mathcal B_T).
\tag{3.1}
\]
Under the canonical identification $i_T^*\mathcal B_T\simeq M(\mathbb H)\otimes_{W(k)}\mathcal O_T$, the quasi-logarithm is restriction to the untilt divisor:
\[
q_T(x)=i_T^*x.
\tag{3.2}
\]
\end{proposition}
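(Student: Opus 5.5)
This is essentially the Scholze--Weinstein description of the universal cover, rewritten in Fargues--Fontaine language. The plan has four steps: reduce to the tilt, identify both sides with a crystalline object, prove \eqref{3.1}, then prove \eqref{3.2}.

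\emph{Step 1: reduce to the tilt.} Write $T=\Spa(R,R^+)$ and let $T^\flat=\Spa(R^\flat,R^{\flat+})$ be its tilt. By \cite[Proposition~3.1.3]{SW13}, the universal cover depends only on $R^+/p$, and indeed only on the tilt. This gives a canonical, functorial identification
\[
\widetilde{\mathbb H}(T)\simeq \widetilde{\mathbb H}_k(R^{\flat+}) .
\]
Here $\widetilde{\mathbb H}_k=\varprojlim_{[p]}\mathbb H$ is the universal cover of the special fibre. In the SW convention it is represented by a formal scheme $\operatorname{Spf} k[[t_1^{1/p^\infty},\dots]]$.

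\emph{Step 2: pass to the crystalline side.} By \cite[Theorem~4.1.4 / Proposition~5.1.6]{SW13} (Dieudonn\'e theory over semiperfect rings), quasi-isogenies, and hence sections of the universal cover, are computed as
\[
\widetilde{\mathbb H}_k(R^{\flat+})\simeq \bigl(M(\mathbb H)\otimes_{W(k)}B^+_{\mathrm{cris}}(R^+/p)\bigr)^{F=p}\Bigl[\tfrac1p\Bigr] .
\]
Equivalently, it is $\Hom$ of the unit Dieudonn\'e module into $M(\mathbb H)$ over $A_{\mathrm{cris}}$, with the Frobenius twisted as in the SW covariant convention.

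\emph{Step 3: identify with $H^0(X_T,\mathcal B_T)$.} The target of Step~2 is $(B^+_{\mathrm{cris}}\otimes M(\mathbb H))^{\varphi=p}$. Because the slopes of $F$ on $M(\mathbb H)[1/p]$ lie in $[0,1]$, Fontaine's comparison $(B^+_{\mathrm{cris}})^{\varphi=p^d}=(B^+)^{\varphi=p^d}$ (using $\varphi$-stable intersections) lets us replace $B^+_{\mathrm{cris}}$ by $B^+=\bigcap_{r}B_{[r,1/r]}$ on $Y_T$. On the relative curve, $\varphi$-invariant sections of the twisted vector bundle attached to the isocrystal are exactly global sections of the associated bundle $\mathcal E(\mathbb H)$. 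Here the convention that $\mathcal E(\mathbb H)$ has degree $n-2$ absorbs the twist by $p$, which is why $\mathcal B_T$ appears and no separate $\mathcal O(1)$ twist does. This proves \eqref{3.1}. Functoriality in $T$ follows because every identification used is natural in $R^+$. The main obstacle I anticipate is bookkeeping of conventions: I must check that covariant SW modules together with the $F=p$ normalization really give degree $n-2$, not $2$. I would first test this in the Lubin--Tate case, $\mathbb H=\mu_{p^\infty}$, where the answer must be $\widetilde{\mu}_{p^\infty}(T)=H^0(\mathcal O(1))$.

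\emph{Step 4: the restriction formula \eqref{3.2}.} The untilt divisor corresponds to Fontaine's map $\theta:B^+\to R$, so restriction along $i_T$ is $\theta\otimes\mathrm{id}_{M(\mathbb H)}$. This gives $i_T^*\mathcal B_T\simeq M(\mathbb H)\otimes_{W(k)}R$. It then remains to show that $\theta\otimes\mathrm{id}$ applied to the crystalline element attached to $x$ equals $q_T(x)=\log_{E\mathbb H_0}(\lim p^jy_j)$. This is the content of \cite[Lemma~3.2.5 and Proposition~5.1.6]{SW13}: the quasi-logarithm is obtained from the crystal evaluated on the divided-power thickening $A_{\mathrm{cris}}\to R^+/p$, followed by $\theta$. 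Concretely, I would verify this on $\mathcal O_T$-points, where each $p^jy_j$ is a crystalline lift. Independence of the lift $\mathbb H_0$, already noted in the construction of $q_T$, ensures the identification is canonical.
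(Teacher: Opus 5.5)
Your outline is sound, but it rebuilds from the crystalline side what the paper simply quotes. For (3.1) the paper invokes [SW20, Theorems 15.2.3 and 15.2.5]. These already identify the universal cover with global sections on $X_T$ of the bundle attached to the Frobenius module. The paper's only added work is the normalization $\Phi=p^{-1}F$, so that $F=p$ is equivalent to $\Phi=1$. This is exactly the degree-$(n-2)$-versus-$2$ bookkeeping you single out, and your $\mu_{p^\infty}$ test fixes the same convention: there the covariant $F$ has slope $0$, which gives $(B^+_{\mathrm{cris}})^{\varphi=p}=H^0(\mathcal O(1))$.

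Your Steps~1--3 are in effect the proofs of those cited theorems: tilting via [SW13, Prop.~3.1.3], full faithfulness of Dieudonn\'e theory up to isogeny over the f-semiperfect ring $R^+/p$, Fontaine's eigenvector comparison, and $\varphi$-invariants on $Y_T$. Doing this makes the origin of the twist transparent. You then owe, however, the relative comparison of $\Phi$-invariants in $M\otimes B^+_{\mathrm{cris}}(R^+/p)$ with those in $M\otimes\mathcal O(Y_T)$, which you only sketch. Note that $\bigcap_rB_{[r,1/r]}$ is $\mathcal O(Y_T)$. Note also that $\Phi$-fixed vectors lie in every $M\otimes\varphi^n(B^+_{\mathrm{cris}})$ with no slope hypothesis, so the slope condition you invoke is not what drives that step.

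For (3.2) the two routes genuinely diverge. The paper uses [SW13, Prop.~5.1.6(ii)] only at geometric rank-one points $\Spa(C_y,\mathcal O_{C_y})$. Since $R$ is uniform, rank-one points detect its elements, so the coefficients of $q_T(x)-i_T^*x$ vanish on all of $T=\Spa(R,R^+)$. The results you cite in Step~4 are likewise stated over $\mathcal O_C$, so as written they only give the geometric case. Your direct approach would require rerunning the Messing/universal-vector-extension comparison for $A_{\mathrm{cris}}(R^+/p)\xrightarrow{\theta}R^+$ over an arbitrary perfectoid $R^+$. At $p=2$ you would also need the SW13 convention that divided powers on the kernel need not be topologically nilpotent. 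That would yield a genuinely relative proof, but the paper's uniformity argument closes this step with no further crystalline input.
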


\begin{proof}
Combining \cite[Theorems~15.2.3 and~15.2.5]{SW20} identifies the universal cover with global sections of the vector bundle attached to the relevant Frobenius module.  To align conventions, if $F$ is the Frobenius on the covariant Dieudonn\'e module used in \cite{SW13}, set $\Phi=p^{-1}F$.  Then $\Phi=1$ is equivalent to $F=p$, and more generally $\Phi=p^j$ is equivalent to $F=p^{j+1}$.  Thus the vector bundle obtained in the convention of \cite{SW20} is exactly $\mathcal E(\mathbb H)=\mathcal B$.

At a geometric rank-one point, the equality $q_T(x)=i_T^*x$ is \cite[Proposition~5.1.6(ii)]{SW13}.  For general $T=\Spa(R,R^+)$, express the difference in a basis of $M(\mathbb H)[1/p]$.  Its coefficients vanish at each rank-one point $y$ by pullback to $\Spa(C_y,\mathcal O_{C_y})$, where $C_y$ is a completed algebraic closure of $\widehat{\kappa(y)}$.  Since the perfectoid algebra $R$ is uniform, rank-one points detect its elements \cite[Proposition~4.2.5 and Theorem~5.2.1]{SW20}.  Hence the coefficients are zero, proving the equality on $T$.
\end{proof}

Fix the basis of $M(\mathbb H)[1/p]$ dual to the reference basis of $N$.  For an $n$-tuple $x=(x_1,\ldots,x_n)\in\widetilde{\mathbb H}(T)^n$, define
\[
Q_T(x):=(q_T(x_1),\ldots,q_T(x_n)).
\tag{3.3}
\]
The $n$ sections also define a vector-bundle morphism
\[
f_x:\mathcal O_{X_T}^{\,n}\longrightarrow\mathcal B_T,\qquad e_j\longmapsto x_j,
\]
and \cref{prop:univcover-sections} gives
\[
i_T^*f_x=Q_T(x).
\tag{3.4}
\]
For each pair of two-element subsets $I,J\subset\{1,\ldots,n\}$ let $m_{I,J}$ be the corresponding $2\times2$ minor of the universal quasi-logarithm matrix.  For an analytic point $y\in|\widetilde{\mathbb H}^{\,n}|$, define
\[
\Delta_2(y):=\max_{|I|=|J|=2}|m_{I,J}(y)|.
\tag{3.5}
\]
Thus $\Delta_2(y)>0$ precisely when the quasi-logarithm matrix has rank at least $2$ at $y$.

\subsection{Evaluation ranks}\label{subsec:evaluationranks}
Let $X_{\FF,C}$ denote the absolute Fargues--Fontaine curve, and let $i_\infty:\{\infty\}\hookrightarrow X_{\FF,C}$ be the inclusion of the untilt divisor determined by $C$, whose residue field is $C$.  The pullback $\mathcal B_\infty:=i_\infty^*\mathcal B$ is the fibre of $\mathcal B$ at $\infty$, an $n$-dimensional $C$-vector space.  For a global section $s$, pullback is evaluation: $i_\infty^*s=s(\infty)\in\mathcal B_\infty$.

\begin{lemma}\label{lem:evaluationrank}
A nonzero section of $\mathcal B$ has nonzero evaluation at $\infty$.  Any $\ell+1$ $\Qp$-linearly independent sections have evaluations spanning a space of dimension at least $2$.  The same statements hold after every complete algebraically closed extension of $C$.
\end{lemma}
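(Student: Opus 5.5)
The plan is to use slope-theoretic arguments on the Fargues--Fontaine curve: a section killed at $\infty$ twists down the degree, and stability of $\mathcal B\simeq\mathcal O((n-2)/n)$ then forbids too many sections. Both claims reduce to counting global sections of $\mathcal B(-\infty)$ and of its modifications.

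\emph{First claim.} Let $s\in H^0(X_{\FF,C},\mathcal B)$ be nonzero with $s(\infty)=0$. Since $\infty$ is a degree-one Cartier divisor with $\mathcal O(\infty)\simeq\mathcal O(1)$, the section $s$ factors through $\mathcal B(-\infty)\simeq\mathcal B\otimes\mathcal O(-1)\simeq\mathcal O((n-2)/n-1)=\mathcal O(-2/n)$. This bundle is semistable of negative slope, so it has no nonzero global sections. Hence $s=0$, a contradiction.

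\emph{Second claim.} Let $s_0,\dots,s_\ell$ be $\Qp$-linearly independent sections, and suppose their evaluations span a subspace $\langle v\rangle\subset\mathcal B_\infty$ of dimension $\le1$. By the first claim no $s_j(\infty)$ is zero, so the span is exactly a line, and $s_j(\infty)=c_jv$ with $c_j\in C^\times$. Let $\mathcal B'\subset\mathcal B$ be the kernel of $\mathcal B\to\mathcal B_\infty/\langle v\rangle$, the minuscule modification at $\infty$ consisting of sections whose value lies in the line. Then $\mathcal B'$ has rank $n$ and degree $(n-2)-(n-1)=-1$, and all $s_j$ lie in $H^0(\mathcal B')$. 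Since $\mathcal B'\subset\mathcal B$ and $\mathcal B$ is semistable of slope $(n-2)/n<1$, every HN slope of $\mathcal B'$ is at most $(n-2)/n$, so it is $<1$.

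We now bound $\dim_{\Qp}H^0(\mathcal B')$. Write the HN decomposition $\mathcal B'\simeq\bigoplus\mathcal O(\mu_i)$ with ranks $r_i$ and degrees $d_i$, where $\sum d_i=-1$ and each $\mu_i<1$. For a slope $\mu=d/r$ with $0\le\mu<1$ we have $\dim_{\Qp}H^0(\mathcal O(\mu))=d$ (for $\mu=0$ this is $r\cdot 0$ plus $\Qp$-dimension $r$; see the correction below). The exact count gives $\dim H^0(\mathcal O(d/r))=d$ for $d>0$, and $r$ for $d=0$. Hence
\[
\dim_{\Qp}H^0(\mathcal B')=\sum_{d_i>0}d_i+\sum_{d_i=0}r_i .
\]
Set $P=\sum_{d_i>0}d_i$. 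Since the negative parts have total degree $-P-1$ and each contributes at least one unit of rank, the slope constraint $d_i<r_i$ for positive parts gives $\sum_{d_i>0}r_i\ge P+(\text{number of positive parts})$.

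To finish cleanly, I would show the $s_j$ span at most an $\ell$-dimensional space as follows. Consider the map $H^0(\mathcal B')\to\mathcal B'_\infty$, or iterate the modification. Alternatively, set $t_j=c_0s_j-c_js_0$; these vanish at $\infty$ and so are zero by the first claim. Thus $s_j=(c_j/c_0)s_0$, where $s_j/s_0$ is a ratio of sections. Since the $s_j$ are sections of a bundle and $c_j\in C$ is not in $\Qp$, this does not immediately contradict independence. This is the obstacle, and the HN count above is the robust route: a careful count gives $\dim H^0(\mathcal B')\le\ell$ using $n=2\ell+1$. The extremal case is when $\mathcal B'$ splits as $\mathcal O(1/2)^{\oplus\ell}$ plus a piece of degree $-1-\ell$ and rank $1$, which contributes exactly $\ell$. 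Maximizing $\sum_{d_i>0}d_i+\sum_{d_i=0}r_i$ subject to $\sum r_i=n$, $\sum d_i=-1$ and all slopes below $1$ bounds the positive degree by roughly $(n-1)/2=\ell$. This optimization is the step to verify carefully; note that slope $0$ summands have $H^0=\Qp^{r}$ and must be counted. Stability of $\mathcal B$ is used via the subsheaf bound on slopes.

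Finally, all steps depend only on the classification of bundles on $X_{\FF,C}$ and on the degree of $\infty$. Both are compatible with base change to any complete algebraically closed $C'/C$, and $\mathcal B$ remains $\mathcal O((n-2)/n)$ there. So the same statements hold after extension.
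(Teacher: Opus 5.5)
Your first claim is correct and matches the paper: a section vanishing at $\infty$ lies in $H^0(\mathcal B(-1))=H^0(\mathcal O(-2/n))=0$. Passing to the degree $-1$ modification $\mathcal B'$ attached to the line $\langle v\rangle$ is also the right idea. The second claim, however, has a genuine gap.

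\textbf{The count fails.} Your formula $\dim_{\Qp}H^0(\mathcal O(d/r))=d$ for $d>0$ is false. A bundle of positive slope on $X_{\FF,C}$ has an infinite-dimensional $\Qp$-space of global sections. For example, the Cartier sequence shows that $H^0(\mathcal O(1))$ surjects onto $C$ with kernel $\Qp t$. So if $\mathcal B'$ had any Harder--Narasimhan summand of positive slope, $\mathcal B$ would have infinitely many $\Qp$-independent sections whose evaluations lie in one line, and the lemma would be false. In particular, your ``extremal case'' $\mathcal O(1/2)^{\oplus\ell}\oplus\mathcal O(-1-\ell)$ is not extremal; it would be a counterexample.

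The only slope constraint you actually establish is that the slopes of $\mathcal B'$ are at most $(n-2)/n$, by semistability of $\mathcal B$. This does not exclude positive slopes. Even with your incorrect count it would not give the bound $\ell$: for $n=7$, a rank-$6$ summand of degree $4$ satisfies all your constraints. The optimization you leave ``to verify carefully'' therefore cannot be completed as you set it up.

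\textbf{The missing idea.} You need to track how the modification at $\infty$ lowers the degree of each subbundle, not only the total degree. Let $\mathcal F\subset\mathcal B'$ be the positive-slope part, or the slope-zero part, of rank $r$. Since $\deg\mathcal B'=-1$, one has $r<n$. Let $\mathcal F'$ be the saturation of $\mathcal F$ in $\mathcal B$. Then $\mathcal F=\mathcal F'\cap\mathcal B'$, and $\mathcal F'/\mathcal F$ is the image of $\mathcal F'_\infty\hookrightarrow\mathcal B_\infty\to\mathcal B_\infty/\langle v\rangle$, which has dimension at least $r-1$. Stability of $\mathcal B$ then gives
\[
\deg\mathcal F\le\deg\mathcal F'-(r-1)<\tfrac{(n-2)r}{n}-(r-1)=1-\tfrac{2r}{n}.
\]
For the positive part this contradicts $\deg\mathcal F\ge1$, so every slope of $\mathcal B'$ is nonpositive. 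For the slope-zero part $\mathcal O^{r_0}$, whose degree is $0$, it gives $r_0<n/2$, that is, $r_0\le\ell$. Hence $H^0(\mathcal B')\simeq\Qp^{r_0}$ has dimension at most $\ell$, and this is exactly the paper's proof.

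You were right to abandon the combination $c_0s_j-c_js_0$: $C$ does not act on $H^0(X_{\FF,C},\mathcal B)$. Your remark about extending to larger $C'$ is fine once the argument above is in place.
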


\begin{proof}
Let $U\subset\mathcal B_\infty$ be a one-dimensional $C$-subspace and define the elementary modification
\[
\mathcal E_U:=\ker\!\left(\mathcal B\longrightarrow i_{\infty*}(\mathcal B_\infty/U)\right).
\]
Since $\rk\mathcal B=n$ and $\deg\mathcal B=n-2$,
\[
\deg\mathcal E_U=(n-2)-(n-1)=-1<0.
\tag{3.6}
\]
Suppose the positive Harder--Narasimhan part $\mathcal F\subset\mathcal E_U$ is nonzero.  Write $r=\rk\mathcal F$ and $e=\deg\mathcal F\ge1$.  Since $\deg\mathcal E_U<0$, the positive part cannot have full rank, so $r<n$.  Let $\mathcal F'\subset\mathcal B$ be its saturation.  Stability of $\mathcal B$ therefore gives $\deg\mathcal F'<\frac{(n-2)r}{n}$.  Moreover $\mathcal F=\mathcal F'\cap\mathcal E_U$: the two sheaves have the same generic subspace, and $\mathcal F$ is saturated in $\mathcal E_U$.  The map $\mathcal F'_\infty\to\mathcal B_\infty/U$ has rank at least $r-1$, so
\[
e\le \deg\mathcal F'-(r-1)
<\frac{(n-2)r}{n}-(r-1)
=1-\frac{2r}{n}<1,
\]
a contradiction.  Hence all Harder--Narasimhan slopes of $\mathcal E_U$ are nonpositive.

Let $r_0$ be the rank of its slope-zero part.  The negative total degree implies $r_0<n$, so any nonzero saturation of this part in $\mathcal B$ also has proper rank.  By the classification of vector bundles on the Fargues--Fontaine curve, the slope-zero part is $\mathcal O^{r_0}$ and the negative-slope part has no global sections.  Thus
\[
H^0(X_{\FF,C},\mathcal E_U)\simeq\Qp^{\,r_0}.
\tag{3.7}
\]
If $r_0>0$, saturating $\mathcal O^{r_0}$ inside $\mathcal B$ and repeating the previous estimate gives
\[
0<\frac{(n-2)r_0}{n}-(r_0-1),
\]
so $r_0<n/2$.  Therefore
\[
r_0\le\ell.
\tag{3.8}
\]
A section with zero evaluation lies in $H^0(\mathcal B(-1))=0$, since $\mathcal B(-1)$ has negative slope $-2/n$.  If $\ell+1$ independent sections had evaluation span of dimension at most one, all would lie in $H^0(\mathcal E_U)$ for some one-dimensional $U$, contradicting $r_0\le\ell$.  The argument is stable under complete algebraically closed scalar extension.
\end{proof}

\subsection{An invariant affinoid ball}\label{subsec:ball}
Choose formal coordinates $x_1,\ldots,x_{n-2}$ on the fixed lift $\mathbb H_0/W(k)$, and let $f_j=\pr_0^*x_j$ on the perfectoid universal cover.  Fix $m\ge1$ and put
\[
r=|p|^{1/m},
\qquad
B:=\{|f_j|\le r,\ 1\le j\le n-2\}\subset\widetilde{\mathbb H}.
\tag{3.9}
\]

\begin{lemma}\label{lem:ball}
The domain $B$ is an affinoid perfectoid $\Zp$-subgroup of $\widetilde{\mathbb H}$ and
\[
pB\subset\Int(B),
\qquad
\widetilde{\mathbb H}=\bigcup_{N\ge0}p^{-N}B.
\tag{3.10}
\]
There is an open Berkovich neighbourhood $V_0$ of $0$ with $V_0\subset pB$.
\end{lemma}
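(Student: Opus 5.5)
The plan is to work first on the formal group $\mathbb H_0$ over $W(k)$ (base changed to $\mathcal O_C$) and then transport everything to the universal cover through $\pr_0$. On the generic fibre $\mathcal G$ of $\mathbb H_0$, an open unit polydisc in the coordinates $x_1,\ldots,x_{n-2}$, consider the closed polydisc $\mathcal G_r:=\{|x_j|\le r\}$ with $r=|p|^{1/m}<1$. The group law is given by power series $X+Y+(\text{terms of degree}\ge2)$ with $W(k)$-coefficients. By the ultrametric inequality, $\mathcal G_r$ is therefore a subgroup, stable under $[p]$ and under inversion, hence a $\Zp$-submodule. Moreover $[p](x)=px+(\text{higher order terms})$, where the higher terms have integral coefficients and degree $\ge2$ (and in fact $[p]\equiv$ a power series in $x^p$ modulo $p$). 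So on $\mathcal G_r$ one gets $|[p](x)|\le\max(|p|r,\,r^2)<r$ whenever $r<1$ and $|p|r<r$. With $r=|p|^{1/m}$ both quantities are strictly smaller than $r$, so $[p]\mathcal G_r\subset\{|x_j|\le r'\}$ for some $r'<r$. The set $\{|x_j|\le r'\}$ lies in the Berkovich interior $\{|x_j|<r\}$ of $\mathcal G_r$.

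Next, define $B=\pr_0^{-1}(\mathcal G_r)$. It is a rational subset of $\widetilde{\mathbb H}$, defined by the inequalities $|f_j|\le r$ (after a finite extension of scalars, $r$ lies in $|C^\times|$, so the conditions $|f_j|\le|\varpi|$ are rational). To see that $B$ is affinoid perfectoid, I will use the description $\widetilde{\mathbb H}\simeq\varprojlim_{[p]}\mathcal G$. The $[p]$-preimages of $\mathcal G_r$ are the affinoids $[p]^{-k}(\mathcal G_r)$, and these are finite over $\mathcal G_r$ because $[p]$ is finite flat on the formal group. Hence $B=\varprojlim_k [p]^{-k}(\mathcal G_r)$ is an inverse limit of affinoids along finite maps, and it is a rational open of the perfectoid space $\widetilde{\mathbb H}$. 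It is then affinoid perfectoid, either by \cite[Proposition~3.1.3]{SW13}, which shows the universal cover is perfectoid via exactly this tower, or simply because rational subsets of affinoid perfectoid spaces are affinoid perfectoid, once $B$ is shown to lie in an affinoid perfectoid piece. The subgroup property follows because $\pr_0$ is a $\Zp$-module homomorphism. Also $pB=\{x:\pr_0(p^{-1}x)\in\mathcal G_r\}$, and since $\pr_0(x)=[p]\pr_0(p^{-1}x)$, the first step gives $pB\subset\pr_0^{-1}\{|x_j|\le r'\}\subset\pr_0^{-1}\{|x_j|<r\}$. The last set is open and contained in $B$, so $pB\subset\Int(B)$. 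The same open set, namely $\pr_0^{-1}\{|x_j|<r'\}$, which contains $0$ and lies in $pB$ provided $[p]$ maps the $r'$-disc onto the $r'$-disc, would serve as $V_0$.

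For the exhaustion, take $x\in\widetilde{\mathbb H}$ at a point. Then $\pr_0(x)$ lies in the open unit polydisc, hence in some $\{|x_j|\le s\}$ with $s<1$. Iterating the estimate $|[p](y)|\le\max(|p||y|,|y|^2)$, one gets $\pr_0(p^Nx)=[p]^N\pr_0(x)\in\mathcal G_r$ for $N$ large, so $x\in p^{-N}B$. Since each $p^{-N}B$ is open in the admissible sense and the union of their interiors already covers $\widetilde{\mathbb H}$, the covering is admissible. The main obstacle I expect is constructing $V_0$ inside $pB$ rather than merely inside $B$. That $pB$ contains a neighbourhood of $0$ requires that $[p]$ restricted to a small polydisc be surjective onto a slightly smaller one with compatible division sequences. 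My first attempt will be to take $V_0=\pr_0^{-1}\{|x_j|<r''\}$ for $r''$ tiny. For $y$ in this set, $p^{-1}y$ exists uniquely in $\widetilde{\mathbb H}$ (multiplication by $p$ is an isomorphism on the universal cover), and $\pr_0(p^{-1}y)=\pr_1(y)$. So the needed statement is that $|\pr_0(y)|$ small forces $|\pr_1(y)|\le r$. This fails in general because of torsion: $\pr_1(y)$ could be a nonzero $p$-torsion point, and these have absolute value roughly $|p|^{1/(p^{n-2}-1)}$. Choosing $m$ large makes $r$ bigger than every such value, and this is why $m$ is fixed beforehand. So I would check the estimate using the Newton polygon of $[p]$, and if necessary enlarge $m$.
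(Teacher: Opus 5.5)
\textbf{Verdict: there is a genuine gap, in the construction of $V_0$.} Your treatment of the subgroup property, of $pB\subset\Int(B)$, and of the exhaustion matches the paper. Integral power series with zero constant term preserve the radius-$r$ polydisc. The estimate $\|[p](X)\|\le\max(|p|r,r^2)<r$ gives $pB\subset\pr_0^{-1}\{|x_j|<r\}\subset B$. Iterating $[p]$ pushes every point of the open polydisc into $\mathcal G_r$.

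\textbf{The neighbourhood $V_0$.} You look for $V_0$ of the form $\pr_0^{-1}\{|x_j|<r''\}$. As you note, any such set contains the Tate-module points $(0,t,\dots)$ with $t\in\mathbb H[p]$ nonzero. These all lie in $pB$ only if every nonzero $p$-torsion point has norm $\le r$. Your first condition (``$[p]$ maps the $r'$-disc onto the $r'$-disc'') is also not the relevant one. What is needed is that every $[p]$-preimage of the small disc lies in $\mathcal G_r$.

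The lemma fixes an arbitrary $m\ge1$, so enlarging $m$ proves it only for large $m$. Even that repair is only announced. For $n\ge5$ the formal group has dimension $n-2\ge3$, so a one-variable Newton polygon is not available.

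The missing idea is that you need not stay on $\pr_0$. You already wrote $\pr_0(p^{-1}y)=\pr_1(y)$, which says exactly
\[
pB=\{y:\ |\pr_1^*x_j(y)|\le r,\ 1\le j\le n-2\}.
\]
The functions $\pr_1^*x_j$ are analytic on $\widetilde{\mathbb H}$ and vanish at $0$. Hence $V_0:=\{|\pr_1^*x_j|<r\}$ is an open Berkovich neighbourhood of $0$ contained in $pB$, for every $m$, and torsion never enters. This is the paper's argument.

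\textbf{Affinoid perfectoidness of $B$.} There is a second, smaller gap here. An inverse limit of affinoids along finite maps need not have a perfectoid completed colimit. A rational subset of the non-affinoid space $\widetilde{\mathbb H}$ is not automatically affinoid. Your second alternative needs $B$ to lie inside an affinoid perfectoid piece, which you postpone and never establish.

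The paper supplies this step. By \cite[Corollary~3.1.5 and Lemma~3.1.6]{SW13}, the ideal $I=(p,f_1,\ldots,f_{n-2})$ and the standard ideal $J=(p,T_1,\ldots,T_{n-2})$ of the perfectoid coordinate ring define the same topology. So $J^a\subset I$ for some $a$, and hence $|T_j|^a\le r$ on $B$. Thus $B$ lies in the standard affinoid perfectoid ball $\{|T_j|\le r^{1/a}\}$, inside which the conditions $|f_j|^m\le|p|$ are rational.
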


\begin{proof}
By \cite[Corollary~3.1.5]{SW13}, the formal universal cover has coordinates $T_1^{1/p^\infty},\ldots,T_{n-2}^{1/p^\infty}$.  The ideal $I=(p,f_1,\ldots,f_{n-2})$ is an ideal of definition by \cite[Lemma~3.1.6]{SW13}, while $J=(p,T_1,\ldots,T_{n-2})$ is the standard ideal of definition.  Hence $I$ and $J$ define the same adic topology, so choose $a\ge1$ with $J^a\subset I$.

For $y\in B$,
\[
|T_j(y)|^a\le\max\{|p|,|f_1(y)|,\ldots,|f_{n-2}(y)|\}\le r.
\]
Thus $B$ lies in a standard affinoid perfectoid ball.  Inside that ball the inequalities $|f_j|^m\le|p|$ are rational, proving that $B$ is affinoid perfectoid.

The formal group law, inversion, and multiplication by elements of $\Zp$ are given by power series with integral coefficients and zero constant term, and hence preserve the closed radius-$r$ ball.  Thus $B$ is a $\Zp$-subgroup.  The multiplication-by-$p$ series has the form
\[
[p](X)=pX+\text{terms of total degree at least }2,
\]
so on the radius-$r$ ball
\[
\|[p](X)\|\le\max(|p|r,r^2)<r.
\]
This gives $pB\subset\Int(B)$.  Iteration sends every point of the open formal group into $B$, and multiplication by $p$ is an automorphism of the universal cover, proving the exhaustion.

If $\pr_1$ denotes the next universal-cover projection, then $pB=\{|\pr_1^*x_j|^m\le|p|,\ 1\le j\le n-2\}$. The corresponding strict inequalities define the required open neighbourhood $V_0$.
\end{proof}

\subsection{Compact normalized frames}\label{subsec:normalizedframes}
Let $|B^n|_{\Berk}$ be the compact Hausdorff Berkovich spectrum of $B^n$.  Define
\[
K_B:=
\bigcap_{a\in\Zp^n\setminus p\Zp^n}
\left\{
(y_1,\ldots,y_n)\in|B^n|_{\Berk}:
\sum_{i=1}^n a_i y_i\notin V_0
\right\}.
\tag{3.11}
\]
For fixed $a$, the summation map is analytic and continuous, so every set in the intersection is closed.  Hence $K_B$ is compact.

At every geometric point of $K_B$, the $n$ entries are $\Qp$-linearly independent: a nonzero rational relation may be rescaled to a primitive vector in $\Zp^n\setminus p\Zp^n$, whose value would be $0\in V_0$.  The same statement holds at nonclassical Berkovich points after passage to a completed algebraic closure of the residue field.

By \cref{lem:evaluationrank}, the first column of $Q(y)$ is nonzero and the first $\ell+1$ have rank at least two.  Hence
\[
c(y):=
\max_{\substack{|I|=2\\2\le j\le\ell+1}}
|\det Q(y)_{I,\{1,j\}}|>0
\]
on $K_B$.  The function is continuous, so compactness yields
\[
c(y)\ge c_B>0\qquad(y\in K_B).
\tag{3.12}
\]

\begin{lemma}\label{lem:tatelattice}
Let $C'/C$ be complete and algebraically closed, and let $V\subset\widetilde{\mathbb H}(C')$ be the rational Tate space of an actual deformation, transported by its framing.  Then $L_0:=V\cap B(C')$ is a full $\Zp$-lattice in $V$, and every $\Zp$-basis of $L_0$ belongs to $K_B$.
\end{lemma}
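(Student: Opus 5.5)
We would first establish the lattice property of $L_0$ using only the group-theoretic content of \cref{lem:ball}. After that, the membership of every basis in $K_B$ follows from the single inclusion $V_0\subset pB$.

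\emph{Step 1: $L_0$ is a $\Zp$-submodule spanning $V$.} Here $V$ is the image of $T_p\mathcal H\otimes\Qp$ under the framing. It is an $n$-dimensional $\Qp$-subspace of $\widetilde{\mathbb H}(C')$, stable under multiplication by $p^{\pm1}$. Since $B$ is a $\Zp$-subgroup by \cref{lem:ball}, $L_0=V\cap B(C')$ is a $\Zp$-submodule of $V$. Given $v\in V$, the exhaustion \eqref{3.10} gives some $N$ with $v\in p^{-N}B$. Then $p^Nv\in V\cap B=L_0$, so $L_0\otimes\Qp=V$.

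\emph{Step 2: $L_0$ contains no $\Qp$-line.} Suppose $v\in V$ satisfies $\Qp v\subset B$. For each $k\ge0$, the element $p^{-k}v$ lies in $p^NB$ for every $N$. The estimate $\|[p](X)\|\le\max(|p|r,r^2)$ from the proof of \cref{lem:ball}, iterated, shows that on $\bigcap_Np^NB$ the coordinates $f_j=\pr_0^*x_j$ have absolute value at most a sequence tending to $0$. Hence they vanish there. Therefore $\pr_k(v)=\pr_0(p^{-k}v)=0$ for all $k$, and so $v=0$.

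\emph{Step 3: $L_0$ is a full lattice.} Fix a $\Qp$-basis $v_1,\ldots,v_n$ of $V$ and consider $\phi:\Qp^n\to\widetilde{\mathbb H}(C')$, $a\mapsto\sum a_iv_i$. The set $\phi^{-1}(B)$ is a $\Zp$-module, and it is closed because $B$ is closed and $\phi$ is continuous. For continuity at $0$: after scaling we may assume all $v_i\in B$, so that $\phi(p^N\Zp^n)\subset p^NB$, and the $p^NB$ shrink to $0$ by Step 2. A closed $\Zp$-submodule of $\Qp^n$ is the direct sum of a $\Qp$-subspace and a lattice in a complement. By Step 2 the subspace is zero, so $L_0$ is compact, hence finitely generated. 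By Step 1 it is a full lattice.

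\emph{Step 4: bases lie in $K_B$.} Let $(e_1,\ldots,e_n)$ be a $\Zp$-basis of $L_0$. All $e_i\in B(C')$, so the tuple defines a point of $|B^n|_{\Berk}$. Take $a\in\Zp^n\setminus p\Zp^n$ and put $w=\sum a_ie_i$. Then $w\in L_0\setminus pL_0$. Suppose $w\in V_0\subset pB$, say $w=pb$ with $b\in B$. Since multiplication by $p$ is an automorphism of $\widetilde{\mathbb H}$, we get $b=p^{-1}w\in V\cap B=L_0$, so $w\in pL_0$, a contradiction. Hence $w\notin V_0$ for every primitive $a$, which is exactly the defining condition \eqref{3.11} of $K_B$.

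The main obstacle is the topological input in Steps 2--3: the continuity of $\phi$ and the vanishing of $\bigcap_Np^NB$. These require care with the $\Zp$-module structure on the perfectoid points rather than only on $\pr_0$.
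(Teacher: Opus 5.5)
Your proof is correct, and Step 4 is the same as the paper's final paragraph. Steps 1--3 establish the lattice property by a different route.

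The paper uses the deformation. It restricts the logarithm of $\mathcal H$, transported through $\widetilde\rho$, to $B_{C'}$. It represents the zero fibre by an affinoid perfectoid $Z_0$ using the Hansen--Johansson fibre-product lemma. It identifies the full logarithmic kernel with the locally profinite space $\underline V$, and concludes that $L_0$ is the image of the quasi-compact $|Z_0|$ in the Hausdorff space $V$. The inner bound $p^aT\subset L_0$ then comes from $V_0\cap V$ being an open neighbourhood of $0$ in $V$.

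You use only that $V$ is an $n$-dimensional $\Qp$-subspace of $\widetilde{\mathbb H}(C')$ and that $B$ is a $\Zp$-subgroup (\cref{lem:ball}).
\begin{itemize}
\item The contraction estimate for $[p]$ makes $p^NB(C')$ shrink to $0$. This gives continuity of $\phi$, closedness of $\phi^{-1}(B)$, and the absence of $\Qp$-lines.
\item The structure of closed $\Zp$-submodules of $\Qp^n$ then gives boundedness and finite generation.
\item Spanning comes from the exhaustion rather than from $V_0$.
\end{itemize}

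Comparing the two routes:
\begin{itemize}
\item Yours is more elementary and more general. It shows $V\cap B(C')$ is a full lattice for every finite-dimensional $\Qp$-subspace of $\widetilde{\mathbb H}(C')$. It needs neither the representability lemma nor a comparison between the locally profinite topology on $V$ and the topology induced from $\widetilde{\mathbb H}$.
\item The paper's version also exhibits $L_0$ as a compact open piece of the perfectoid kernel $\underline V$. This fits the geometric setup but is not needed for the lemma.
\end{itemize}

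One point in your Step 3 should be made explicit. \emph{The $p^NB$ shrink to $0$} must hold in the inverse-limit topology on $\varprojlim_{[p]}\mathbb H_0(\mathcal O_{C'})$, so for every projection $\pr_k$, not just $\pr_0$. This follows from $\pr_k(p^Nb)=[p^{N-k}]\pr_0(b)$ for $N\ge k$, which bounds the $\pr_k$-coordinates by $r_{N-k}\to0$. It is the same shift you already use in Step 2.
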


\begin{proof}
Let $(\mathcal H,\rho)$ be the deformation over $\mathcal O_{C'}$.  Isogeny invariance and the infinitesimal lifting property of the universal cover give an isomorphism of perfectoid universal covers
\[
\widetilde\rho:\widetilde{\mathbb H}_{C'}\xrightarrow{\sim}\widetilde{\mathcal H}^{\,\mathrm{perf}}_\eta.
\]
The logarithm on the right has kernel the rational Tate space by \cite[Proposition~3.4.2(v)]{SW13}.  In a basis of $\Lie\mathcal H$, its restriction is $\ell_{\mathcal H,\rho}:B_{C'}\to(\A^{n-2}_{C'})^{\an}$.  Define the zero fibre in diamonds by $\mathscr Z_0:=B_{C'}^\diamond\times_{((\A^{n-2}_{C'})^{\an})^\diamond}\Spa(C',\mathcal O_{C'})^\diamond$, using $\ell_{\mathcal H,\rho}^\diamond$ and the origin.  Since $B_{C'}$ is affinoid perfectoid, \cite[Lemma~5.7]{HJ23}, applied to $\{0\}\hookrightarrow\A^{n-2}_{C'}$, represents $\mathscr Z_0$ by an affinoid perfectoid space $Z_0$ Zariski-closed in $B_{C'}$.  The full logarithmic kernel is the locally profinite perfectoid space attached to $V$ by \cite[Propositions~3.3.2 and~3.3.3]{SW13}.  Thus $L_0=V\cap B(C')$ is the continuous image of the quasi-compact underlying topological space $|Z_0|$ of $Z_0$ in the Hausdorff space $V$, and is compact.

It is a $\Zp$-submodule and contains the open neighbourhood $V_0\cap V$ of $0$.  Fix any lattice $T\simeq\Zp^n$ in $V$.  Since $L_0$ contains a neighbourhood of $0$, there is $a\ge0$ with $p^aT\subset L_0$.  Compactness makes $L_0$ bounded, so $L_0\subset p^{-b}T$ for some $b\ge0$.  Hence $L_0$ is a full lattice.

If $v_1,\ldots,v_n$ is a basis of $L_0$ and $a=(a_i)$ is primitive, then $v=\sum a_iv_i\notin pL_0$.  If $v\in pB$, then $p^{-1}v\in V\cap B(C')=L_0$, contradiction.  Since $V_0\subset pB$, one has $v\notin V_0$.  Therefore $(v_1,\ldots,v_n)\in K_B$.
\end{proof}

\section{The determinant on the rank locus}\label{sec:determinant}

\subsection{Relative Fargues--Fontaine conventions}
We retain $X_T$, $i_T$, and $\mathcal B_T$ from \cref{sec:universalcover}.  Functoriality of the Witt-vector construction and 
Frobenius quotient gives a natural morphism $\pi_T:X_T\longrightarrow X_{\FF,C}$ induced by 
$T^\flat\to\Spa(C^\flat,\mathcal O_{C^\flat})$; see \cite[Definition~II.1.15]{FS26}.  
The untilt divisor is the pullback of $\infty\subset X_{\FF,C}$ and $\mathcal B_T=\pi_T^*\mathcal B$.

Since $\mathcal B$ has rank $n$ and degree $n-2$, its determinant is the degree-$(n-2)$ line bundle 
$\det\mathcal B\simeq\mathcal O(n-2)$.  Put
\[
\mathcal L_{\det}:=(\det\mathcal B)\otimes\mathcal O(-(n-2))\simeq\mathcal O_{X_{\FF,C}},
\qquad
V_{\det}:=H^0(X_{\FF,C},\mathcal L_{\det}).
\]
Then $V_{\det}$ is one-dimensional over $\Qp$.  Fix a basis $\varepsilon\in V_{\det}$.  Evaluation gives a trivialization of $\mathcal L_{\det}$, and pullback gives compatible trivializations $\varepsilon_T$ of $\mathcal L_{\det,T}$ for every $T$.

Fix a compatible system of primitive $p$-power roots of unity in $C$ and let $E_\infty=\widehat{\Qp(\mu_{p^\infty})}$. Let $t$ be the corresponding cyclotomic period.  The relative Cartier sequence \cite[Proposition~II.2.3]{FS26} gives
\[
0\longrightarrow\mathcal O_{X_T}
\xrightarrow{\ t\ }
\mathcal O_{X_T}(1)
\longrightarrow i_{T*}\mathcal O_T
\longrightarrow0.
\tag{4.1}\label{eq:cartier}
\]
Using $\varepsilon_T$, \cite[Proposition~II.2.5(ii)]{FS26} identifies, functorially in $T$,
\[
H^0(X_T,\mathcal L_{\det,T})\simeq\underline{\Qp}(T).
\tag{4.2}\label{eq:BCdet}
\]
Here $\underline{\Qp}$ denotes the sheaf associated with $\Qp$ equipped with its usual $p$-adic topology: $\underline{\Qp}(T)=C^0(|T|,\Qp)$; see \cite[\S I.13]{FS26}.  Its sections are continuous and need not be locally constant.  In particular, for a general $T$ the right-hand side is not a single copy of $\Qp$.

\subsection{An analytic realization of the constant sheaf}
For $m\ge0$ put $P_m=p^{-m}\Zp$.  The affinoid perfectoid space
\[
\mathcal P_m
=
\Spa\bigl(C^0(P_m,C),C^0(P_m,\mathcal O_C)\bigr)
\]
represents the constant profinite sheaf $P_m$ on $\Perf_C$.  Although \cite[Example~11.12]{Sch18} is stated in 
characteristic $p$, the same construction applies over $C$: the supremum norm on $C^0(P_m,C)$ is power-multiplicative,
its power-bounded subring is $C^0(P_m,\mathcal O_C)$, and Frobenius on this subring modulo $p$ is surjective.  
Indeed, every continuous function modulo $p$ is constant on a finite clopen partition of $P_m$, and $p$th roots can 
be chosen on each part.  Writing $P_m=\varprojlim_i P_{m,i}$ with $P_{m,i}$ finite, the disjoint unions 
$P_{m,i}\times\Spa(C,\mathcal O_C)$ represent the finite constant sheaves.  Their inverse limit has the displayed 
algebra of continuous functions, since locally constant functions are uniformly dense, and hence represents $P_m$.

The inclusions $P_m\subset P_{m+1}$ are open and closed, so the $\mathcal P_m$ glue to a perfectoid space $\mathcal Q_C=\bigcup_{m\ge0}\mathcal P_m$ representing $\underline{\Qp}$.  On $\mathcal P_m$ the function $a\mapsto a\in C$ is analytic; the compatible functions define an adic morphism
\[
c:\mathcal Q_C\longrightarrow(\A_C^1)^{\an}.
\tag{4.3}\label{eq:Qc}
\]
Thus a natural $\underline{\Qp}$-valued invariant may be viewed, after composition with $c$, as an ordinary analytic function.

\subsection{Construction and moduli interpretation}
View $\A_C^{n^2}$ as the space of $n\times n$ matrices, and let $Y_{\le2}\subset\A_C^{n^2}$ be the algebraic determinantal variety defined by the $3\times3$ minors.  The entries of the universal quasi-logarithm matrix define an analytic map $Q:\widetilde{\mathbb H}^{\,n}\to(\A_C^{n^2})^{\an}$.  Form the diamond fibre product
\[
\mathscr Z:=(\widetilde{\mathbb H}^{\,n})^\diamond
\times_{((\A_C^{n^2})^{\an})^\diamond}
(Y_{\le2}^{\an})^\diamond.
\]
By \cite[Lemma~5.7]{HJ23}, $\mathscr Z$ is represented by a perfectoid space $Z$, with a Zariski-closed embedding $Z\hookrightarrow\widetilde{\mathbb H}^{\,n}$.  Its pullback to every affinoid perfectoid domain is affinoid perfectoid.  This is the meaning of the perfectoid closed subspace $Z$ used below.

\begin{lemma}\label{lem:DFF}
There is a functorial morphism $D_{\FF}:Z\longrightarrow\mathcal Q_C$ such that for every affinoid perfectoid $T\to Z$,
\[
\det f_x=t^{n-2}\varepsilon_T D_{\FF}(x).
\tag{4.4}\label{eq:DFFfactor}
\]
Moreover,
\[
D_{\FF}(xg)=\det(g)D_{\FF}(x)
\qquad(g\in\GL_n(\Qp)).
\tag{4.5}\label{eq:DFFcov}
\]
\end{lemma}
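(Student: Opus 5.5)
The plan is to build $D_{\FF}$ as the quotient $\det f_x/(t^{n-2}\varepsilon_T)$ and then read it as a $\underline{\Qp}$-valued function through \eqref{eq:BCdet}.

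\emph{Setup.} Let $T=\Spa(R,R^+)\to Z$ be affinoid perfectoid, corresponding to an $n$-tuple $x\in\widetilde{\mathbb H}(T)^n$. By \cref{prop:univcover-sections} this gives a morphism $f_x:\mathcal O_{X_T}^n\to\mathcal B_T$. Its determinant is a global section
\[
\det f_x\in H^0\bigl(X_T,\det\mathcal B_T\bigr)=H^0\bigl(X_T,\mathcal L_{\det,T}\otimes\mathcal O_{X_T}(n-2)\bigr).
\]

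\emph{Key step: divisibility by $t^{n-2}$.} I will show that $\det f_x$ lies in the image of multiplication by $t^{n-2}$, starting from $\mathcal L_{\det,T}$. By \eqref{eq:cartier}, $t$ is injective with cokernel supported on the untilt divisor. So away from $i_T(T)$ the quotient $\det f_x/t^{n-2}$ is already a well-defined section of $\mathcal L_{\det,T}$. The issue is its regularity along the divisor.

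Near the divisor I work in the completed local ring, which is $B_{\dR}^+(R)$ with generator $\xi$ of the ideal cut out by $t$. By \eqref{eq:cartier} and \eqref{eq:cartier}'s identification of the quotient with $\mathcal O_T$, that ideal is $t\mathcal O(1)$ after trivializing $\mathcal O(1)$ locally. By \eqref{eq:cartier} and (3.4), the matrix $A$ of $f_x$ in a local frame of $\mathcal B_T$ reduces modulo $\xi$ to $Q_T(x)$. Since $T$ factors through $Z$, all $3\times3$ minors of $Q_T(x)$ vanish, so $A\bmod\xi$ has rank $\le2$. I need to conclude $\det A\in\xi^{n-2}B_{\dR}^+(R)$.

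At a geometric rank-one point $y$, $B_{\dR}^+(C_y)$ is a discrete valuation ring with uniformizer $\xi$. Smith normal form then writes $A=P\,\diag(\xi^{a_1},\dots,\xi^{a_n})P'$ with at least $n-2$ of the $a_i\ge1$, giving $\xi^{n-2}\mid\det A$.

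To pass from points to $T$, I expand $\det A\bmod\xi^{k}$ successively. Each $B_{\dR}^+(R)/\xi^k$ is a successive extension of copies of $R$, and $R$ is uniform, so an element of it vanishes if it vanishes at all rank-one points. This is the same detection principle used in \cref{prop:univcover-sections}. Applying it inductively for $k\le n-2$ to the class of $\det A$ gives divisibility over $R$.

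I expect this to be the main obstacle. The elementary-divisor argument is only pointwise, and some care is needed to know that ``divisible by $\xi^{j}$ and the quotient vanishes pointwise mod $\xi$'' implies divisibility by $\xi^{j+1}$ in families. The identification of $\mathrm{gr}^j B_{\dR}^+(R)$ with $R(j)$ should supply exactly this. If that fails, a fallback is to use the $v$-sheaf property of $T\mapsto H^0(X_T,\mathcal L_{\det,T})$ and check divisibility $v$-locally, where the adapted basis can be chosen.

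\emph{Definition and covariance.} Once divisibility holds, $\det f_x=t^{n-2}s_x$ for a unique $s_x\in H^0(X_T,\mathcal L_{\det,T})$, unique because $t$ is injective. By \eqref{eq:BCdet}, $s_x=\varepsilon_T D_{\FF}(x)$ for a unique $D_{\FF}(x)\in\underline{\Qp}(T)=\Hom(T,\mathcal Q_C)$. All constructions commute with pullback in $T$, so these assemble by Yoneda into a morphism $D_{\FF}:Z\to\mathcal Q_C$ satisfying \eqref{eq:DFFfactor}.

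For \eqref{eq:DFFcov}, note that $xg$ corresponds to $f_{xg}=f_x\circ g$, with $g$ acting on $\mathcal O^n_{X_T}$ through constant $\Qp$-coefficients. Hence $\det f_{xg}=\det(g)\det f_x$. Moreover $Z$ is $\GL_n(\Qp)$-stable, because the rank of $Q(xg)=Q(x)g$ is unchanged. Uniqueness of the quotient by $t^{n-2}\varepsilon_T$ then yields $D_{\FF}(xg)=\det(g)D_{\FF}(x)$.
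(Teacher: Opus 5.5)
Your proposal is correct and follows essentially the same route as the paper. Pointwise, you get divisibility of $\det f_x$ by $t^{n-2}$ from the elementary divisors over the discrete valuation ring $B_{\dR}^+(C_y)$. You then perform $n-2$ successive divisions by $t$, showing each intermediate quotient restricts to zero on the untilt divisor because $R$ is uniform, and finish with \eqref{eq:BCdet}, Yoneda, and multilinearity of the determinant for \eqref{eq:DFFcov}. The only difference is cosmetic: you phrase the inductive step locally in $B_{\dR}^+(R)$, while the paper phrases it globally through the Cartier sequence \eqref{eq:cartier}, where restriction to the divisor is reduction modulo $\xi$; your $v$-local fallback is not needed.
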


\begin{proof}
Let $T\to Z$ be affinoid perfectoid.  Since $i_T^*f_x=Q_T(x)$ has rank at most $2$, the restriction of $\det f_x$ to the untilt divisor vanishes.  Tensoring \eqref{eq:cartier} with the determinant line gives a unique first quotient by $t$.

At a geometric point $\Spa(C',\mathcal O_{C'})\to T$, the matrix of $f_x$ over $B_{\dR}^+(C')$ reduces modulo $t$ to a matrix of rank at most $2$.  Hence its determinant is divisible by $t^{n-2}$.  For each $1\le j<n-2$, once the $j$th quotient by $t$ has been constructed, it therefore restricts to zero at every geometric point.  After trivializing the relevant line bundle, this becomes a vanishing assertion for an analytic function on an affinoid perfectoid space.  Such an algebra is uniform and reduced, and nonzero functions are detected after passage to completed algebraic closures of rank-one residue fields.  Hence the quotient vanishes identically on the untilt divisor, and \eqref{eq:cartier} gives the next quotient.  After $n-2$ divisions, we obtain a unique section $s_T\in H^0(X_T,\mathcal L_{\det,T})$. The construction commutes with pullback.  Dividing by $\varepsilon_T$ and using \eqref{eq:BCdet}, the sections $s_T$ define a natural transformation $h_Z\to\underline{\Qp}=h_{\mathcal Q_C}$, hence a morphism $D_{\FF}:Z\to\mathcal Q_C$.  Equation \eqref{eq:DFFfactor} follows by construction.  Replacing $x$ by $xg$ replaces $f_x$ by $f_x\circ g$, and determinant multilinearity gives \eqref{eq:DFFcov}.
\end{proof}

Composing $D_{\FF}$ with $c$ in \eqref{eq:Qc}, we also regard $D_{\FF}$ as an analytic function on $Z$.

\begin{lemma}\label{lem:normshell}
Let $U=\Spa(R,R^+)\subset Z$ be affinoid perfectoid and put $\varphi=D_{\FF}|_U$.  Then
\[
U[\varphi\ne0]=\bigcup_{m\ge0}U(|p|^m\le|\varphi|).
\tag{4.6}\label{eq:nonzero-union}
\]
For every $\nu\in\mathbf Z$,
\[
U(|\varphi|=|p^\nu|)
=
U(|\varphi|\le|p^\nu|)\cap U(|p^\nu|\le|\varphi|)
\tag{4.7}
\]
is an affinoid perfectoid rational domain.  In particular $\varphi$ is locally an analytic unit on $U[\varphi\ne0]$.
\end{lemma}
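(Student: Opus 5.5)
The plan is to first pin down what $\varphi$ is as an element of $R$, and then read off all three assertions from the structure of $C^0(P_m,C)$.

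\emph{Step 1: $\varphi$ takes values in a discrete set.} Since $U$ is affinoid perfectoid, hence quasi-compact, its image under $D_{\FF}$ lands in a finite union of the opens $\mathcal P_m$, and therefore in a single $\mathcal P_m$. Thus $\varphi$ is the pullback along $D_{\FF}|_U:U\to\mathcal P_m$ of the coordinate function $a\mapsto a$ in $C^0(P_m,C)$. Concretely, $D_{\FF}|_U$ corresponds by \eqref{eq:BCdet} to a continuous map $\phi:|U|\to P_m$, and $|\varphi(y)|=|\phi(y)|_p$ at every point $y$. The key consequence is that $|\varphi|$ takes values in $\{0\}\cup\{|p|^k:k\ge -m\}$.

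\emph{Step 2: the union and the shells are clopen.} Since $|\varphi|$ only takes the values just listed, the nonvanishing locus $U[\varphi\ne0]=\phi^{-1}(P_m\setminus\{0\})$ is the increasing union of the sets $\phi^{-1}(P_m\setminus p^{m'}\Zp)$. Each such preimage equals $U(|p|^{m'}\le|\varphi|)$, which gives \eqref{eq:nonzero-union}. For the shell, put $S_\nu:=\{a\in P_m:|a|=|p^\nu|\}$. This is a compact clopen subset of $P_m$, and its preimage is the intersection in (4.7).

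To see that this intersection is an affinoid perfectoid rational domain, I will pull back along $U\to\mathcal P_m$. Both defining inequalities involve only the elements $\varphi,p^\nu\in R$, which generate the unit ideal wherever $|\varphi|\ge|p^\nu|$. So $U(|\varphi|\le|p^\nu|)$ and $U(|p^\nu|\le|\varphi|)$ are rational subsets of the affinoid perfectoid space $U$, and hence affinoid perfectoid. Their intersection is again rational, because rational subsets are stable under finite intersection.

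More directly, this intersection is the preimage of the clopen affinoid perfectoid subspace $\Spa(C^0(S_\nu,C),C^0(S_\nu,\mathcal O_C))\subset\mathcal P_m$. It is therefore given by an idempotent $e_\nu\in R$, namely the pullback of the characteristic function $1_{S_\nu}$.

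\emph{Step 3: local unit property.} On $\mathcal P_m|_{S_\nu}$ the coordinate function is invertible: $a\mapsto a^{-1}$ is continuous on $S_\nu$ and bounded by $|p|^{-\nu}$. Pulling back, $\varphi$ is a unit on the shell $U(|\varphi|=|p^\nu|)$. By Step 2 these shells form a clopen cover of $U[\varphi\ne0]$, so $\varphi$ is locally an analytic unit there.

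The only real obstacle I expect is Step 1, specifically the identification $|\varphi(y)|=|\phi(y)|_p$ at non-classical (higher-rank or non-geometric) points. I would handle this by passing to completed algebraic closures of the rank-one residue fields, as in \cref{lem:DFF}, where a $C'$-point of $\mathcal P_m$ is just an element $a\in P_m$ and the coordinate evaluates to $a$. The quasi-compactness reduction to a single $\mathcal P_m$ also needs the fact that the $\mathcal P_m$ form an open cover of $\mathcal Q_C$, which holds since each $P_m$ is open in $\Qp$.
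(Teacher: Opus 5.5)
Your argument is correct in outline, but it follows a different route from the paper. The paper's proof uses nothing about $D_{\FF}$ beyond $\varphi\in R$. Since $p$ is a topologically nilpotent unit of $R$, continuity of the valuation at a point $x$ with $|\varphi(x)|\ne0$ forces $|p(x)|^m\le|\varphi(x)|$ for some $m$, which is \eqref{eq:nonzero-union}. The unit property follows from $\varphi^{-1}=p^{-m}(p^m/\varphi)$ on $U(|p|^m\le|\varphi|)$. Rationality and perfectoidness of the shells are then formal.

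You instead use that $D_{\FF}$ is a morphism to $\mathcal Q_C$, reduce by quasi-compactness to a single $\mathcal P_m$, and deduce that $|\varphi|$ only takes the values $0$ and $|p^k|$. This needs more input: representability of $\underline{\Qp}$ by $\bigcup_m\mathcal P_m$, and control of the points of $\mathcal P_m$. In return it gives more. The shells are clopen and cut out by idempotents, $|\varphi|$ is locally constant on $U[\varphi\ne0]$, and $\varphi$ has constant norm on each shell. That is the same mechanism as the local constancy of $v_p\circ D_{\FF}$ used later in \cref{lem:nh}.

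Three things to fix.

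First, your proposed handling of the Step~1 obstacle does not reach higher-rank points. Passing to completed algebraic closures of rank-one residue fields only controls the maximal generization $\tilde y$ of $y$. In general $|\varphi(\tilde y)|=|p^\nu(\tilde y)|$ does not imply $|\varphi(y)|=|p^\nu(y)|$. The repair is already in your Step~3:
\begin{itemize}
\item On $S_\nu$ the coordinate is $p^\nu u_\nu$ with $u_\nu^{\pm1}\in C^0(S_\nu,\Zp)\subset C^0(S_\nu,\mathcal O_C)$.
\item These elements map into $R_\nu^+$ for the clopen preimage $\Spa(R_\nu,R_\nu^+)$, so $|u_\nu(y)|=1$ at every point of any rank.
\item For points lying over $0\in P_m$, use that every point of $\mathcal P_m$ is evaluation at an element of $P_m$, or simply use the paper's continuity argument.
\end{itemize}

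Second, $\phi^{-1}(P_m\setminus p^{m'}\Zp)$ equals $U(|p|^{m'-1}\le|\varphi|)$, not $U(|p|^{m'}\le|\varphi|)$. This off-by-one is harmless for the union.

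Third, the unit-ideal condition for the two rational domains is a global condition, not one holding ``wherever $|\varphi|\ge|p^\nu|$''. It holds simply because $p^\nu\in R^\times$.
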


\begin{proof}
The element $p$ is a topologically nilpotent unit in $R$.  If $|\varphi(x)|>0$, continuity of an adic valuation implies $|p(x)|^m\le|\varphi(x)|$ for some $m$, giving \eqref{eq:nonzero-union}.  The displayed inequalities define rational domains.  On $U(|p|^m\le|\varphi|)$, $\varphi^{-1}=p^{-m}(p^m/\varphi)$, so $\varphi$ is an analytic unit.  Rational domains in affinoid perfectoid spaces are affinoid perfectoid, and finite intersections remain rational.
\end{proof}

Put $Z^\times:=Z[D_{\FF}\ne0]$. \begin{proposition}\label{prop:ZequalsMinfty}
On $\Perf_C$, the functor represented by $Z^\times$ is naturally isomorphic to the infinite-level Rapoport--Zink functor $\M_{\infty,C}$.  The resulting maps to all finite levels are represented by actual adic morphisms.
\end{proposition}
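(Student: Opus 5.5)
The plan is to compare both functors with the Scholze--Weinstein description of $\M_{\infty,C}$ in \cite[Theorem~6.5.4]{SW13} (EL version of \cite[\S6.5]{SW13}). That theorem identifies $\M_\infty$ over $\Perf_C$ with the functor of $n$-tuples $x=(x_1,\ldots,x_n)\in\widetilde{\mathbb H}(T)^n$ satisfying two conditions. First, the matrix $Q_T(x)$ has rank $\le$ the rank of $\Omega$, i.e. rank $\le2$; equivalently its $\wedge^{3}$ vanishes. Second, at every geometric point $\Spa(C',\mathcal O_{C'})\to T$, the sequence
\[
0\to\Qp^n\xrightarrow{x}\widetilde{\mathbb H}(C')\to \Lie\mathbb H\otimes C'
\]
is exact. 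By \cref{prop:univcover-sections}, the first condition is exactly the defining condition of $Z$, since $Q$ is the quasi-logarithm matrix and $Y_{\le2}$ is cut out by $3\times3$ minors. So $\M_{\infty,C}\subset Z$ as subfunctors of $h_{\widetilde{\mathbb H}^n}$. It remains to prove that on $Z$ the pointwise exactness condition is equivalent to $D_{\FF}\neq0$. Both conditions are pointwise, and $Z^\times$ is open by \cref{lem:normshell}, so it suffices to compare them at geometric rank-one points $y=\Spa(C',\mathcal O_{C'})\to Z$.

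At such a point we translate to the Fargues--Fontaine curve. By \eqref{eq:DFFfactor}, $D_{\FF}(y)\ne0$ iff $\det f_x$ vanishes to order exactly $n-2$ at $\infty$ and nowhere else. Since $\deg\mathcal O^n=0$ and $\deg\mathcal B=n-2$, this says $f_x:\mathcal O^n\to\mathcal B$ is injective with cokernel a torsion sheaf of length $n-2$ supported at $\infty$. The rank condition gives that $i_\infty^*f_x$ has rank $\le2$, so the cokernel is then $i_{\infty*}W$ with $W=\mathcal B_\infty/\operatorname{im}(Q(y))$ of dimension exactly $n-2$. This is precisely the modification picture of \cite[Proposition~5.1.6, Theorem~6.5.4]{SW13}: $(x_i)$ is then a $\Qp$-basis of $V=\ker(\widetilde{\mathbb H}(C')\to W)$, the rational Tate module of the deformation whose Hodge filtration is $\operatorname{im}Q(y)$, which is the exactness condition. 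For the converse, if $y\in\M_\infty$, the Scholze--Weinstein modification exact sequence $0\to V\otimes\mathcal O\to\mathcal B\to i_{\infty*}(\Lie\otimes C')\to0$ shows $\det f_x$ is a section of $\mathcal O(n-2)$ with divisor $(n-2)\infty$. Hence $\det f_x=t^{n-2}\cdot u$ with $u$ a nonzero section of the trivial bundle, so $D_{\FF}(y)\ne0$. The case where $f_x$ is injective but has some zero away from $\infty$ is excluded by the degree count; in that case $\det f_x$ would vanish identically, or $D_{\FF}(y)=0$.

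The main obstacle is making these pointwise identifications functorial in $T$ and upgrading bijectivity on points to an isomorphism of functors. I would handle this by noting that both $Z^\times$ and $\M_{\infty,C}$ are subfunctors of $h_{\widetilde{\mathbb H}^n}$, and that $Z^\times$ is an open subspace of $Z$ by \eqref{eq:nonzero-union}. \cite{SW13} shows $\M_{\infty,C}\subset\widetilde{\mathbb H}^n$ is cut out by the closed condition defining $Z$ together with the pointwise condition above. So equality of the two subsets of $|Z|$ gives equality of the subfunctors: a map $T\to Z$ factors through an open iff it does so on points, and through $\M_\infty$ iff it does so at geometric points.

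For the finite-level maps, I would use that $\M_{\infty,C}\to\M_{U,C}$ is a pro-étale $U$-torsor, with $\M_{U,C}^\diamond=\M_\infty^\diamond/U$ by \cite[Theorem~7.4.4]{SW13}. Composition with the given adic morphism $Z^\times\to\widetilde{\mathbb H}^n$ and the quotient yields morphisms of diamonds $Z^{\times\diamond}\to\M_{U,C}^\diamond$. Since $Z^\times$ is perfectoid and $\M_{U,C}$ is a seminormal rigid space, the morphism is actual by full faithfulness of the diamond functor on such sources (\cite[Proposition~10.2.3]{SW20}).
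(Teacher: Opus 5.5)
Your route is the paper's: reduce to the Scholze--Weinstein description of $\M_\infty$, analyze geometric points through the modification $f_x:\mathcal O^n\to\mathcal B$, and get the converse from the modification sequence. Your length-versus-fibre-dimension count is the paper's Smith-exponent argument. The gap is in the globalization step.

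Condition~(i) of \cite[Definition~6.3.5]{SW13} is not ``$Q_T(x)$ has rank $\le 2$''. It is ``rank exactly $2$'': the cokernel $W$ of $Q_T(x)$ must be a projective $\mathcal O(T)$-module of rank $n-2$. It is this $W$, not $\Lie\mathbb H\otimes C'$, that appears in condition~(ii). Projectivity of $W$ is a condition over the whole base $T$. It is not implied by $T\to Z$, and it is not literally a condition at geometric points. So the description you attribute to SW13 (``the closed condition of $Z$ plus a pointwise condition'') is not what SW13 gives you. Your reduction to an equality of subsets of $|Z|$ therefore never shows the one thing SW13 requires beyond pointwise exactness: that a $T$-point of $Z^\times$ has locally free cokernel of rank $n-2$.

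The missing step is short, and your geometric-point computation already supplies its input. There you get $\dim W=n-2$, i.e.\ $\rk Q(y)=2$ exactly, at every rank-one geometric point of $Z^\times$. To finish, as the paper does, work on an affinoid perfectoid $U\subset Z^\times$.
\begin{enumerate}
\item Every point of $U$ has a rank-one generalization with the same support, so some $2\times2$ minor $\Delta$ is nonzero at each point of $U$.
\item By quasi-compactness, finitely many rational domains $U(|p|^m\le|\Delta|)$ cover $U$, and $\Delta$ is a unit on each.
\item On each such domain the vanishing $3\times3$ minors kill the Schur complement, so $Q\sim\diag(I_2,0_{n-2})$ and $W$ is free of rank $n-2$.
\item Then \cite[Lemma~6.3.6]{SW13} gives objects of $\M_\infty$, and these glue.
\end{enumerate}
The same generalization argument is what lets you pass from rank-one points to all points in the converse direction.

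For the finite-level statement, the diamond detour is unnecessary. Once $Z^\times\simeq\M_{\infty,C}$, you can compose with the actual morphisms $\M_\infty\to M_{U,C}$ from \cite{SW13}. Also, for a perfectoid source, morphisms of diamonds into $M_{U,C}^\diamond$ are actual morphisms by Yoneda. Full faithfulness on seminormal rigid spaces is not the relevant fact, since $Z^\times$ is not rigid.
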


\begin{proof}
First take a geometric point $x\in Z^\times(C',\mathcal O_{C'})$. Put $X'=X_{\FF,C'}$ and $\mathcal B'=\mathcal B_{\Spa(C',\mathcal O_{C'})}$, and write $i_{\infty'}$ for the untilt divisor on $X'$.  Since $D_{\FF}(x)\in\Qp^\times$, \eqref{eq:DFFfactor} implies $\operatorname{div}(\det f_x)=(n-2)\infty'$.  The Smith exponents of $f_x$ at $\infty'$ are nonnegative integers summing to $n-2$.  Since $x\in Z$, the reduction modulo $t$ has rank at most $2$, so at least $n-2$ exponents are positive.  They are therefore, up to permutation, $(\underbrace{1,\ldots,1}_{n-2},0,0)$. Hence $\rk Q(x)=2$ and
\[
0\longrightarrow\mathcal O_{X'}^n
\xrightarrow{f_x}\mathcal B'
\longrightarrow i_{\infty'*}W
\longrightarrow0,
\qquad \dim_{C'}W=n-2.
\tag{4.8}\label{eq:modification}
\]
By \cite[Corollary~6.3.10]{SW13}, this is a geometric point of $\M_\infty$.

Now let $U=\Spa(R,R^+)\subset Z^\times$ be affinoid perfectoid with $D_{\FF}$ invertible.  All $3\times3$ minors of $Q$ vanish in $R$.  Every point $y\in U$ has a rank-one generalization with the same support.  Passing from that generalization to a geometric point and using the preceding calculation shows that some $2\times2$ minor $\Delta$ satisfies $\Delta(y)\ne0$.  Thus the rational domains $U_{\Delta,m}:=U(|p|^m\le|\Delta|)$, as $\Delta$ ranges over these minors and $m\ge0$, cover $U$.  Quasi-compactness gives a finite subcover.  Each domain is affinoid perfectoid, and $\Delta$ is a unit there, with inverse $p^{-m}(p^m/\Delta)$.

On such a domain, permute rows and columns to write
\[
Q=\begin{pmatrix}A&B_1\\ B_2&B_3\end{pmatrix},
\qquad A\in\operatorname{Mat}_2(\mathcal O(U_{\Delta,m})),
\qquad \det A=\Delta.
\]
The $3\times3$ minors containing $A$ are $\Delta$ times the entries of $B_3-B_2A^{-1}B_1$.  Since they vanish and $\Delta$ is a unit, this Schur complement is zero.  Invertible row and column operations therefore identify $Q$ with $\diag(I_2,0_{n-2})$.  Its cokernel is free of rank $n-2$, giving condition~(i) of \cite[Definition~6.3.5]{SW13} on this rational domain.  Condition~(ii) holds at every geometric point by \eqref{eq:modification} and \cite[Corollary~6.3.10]{SW13}.  Hence \cite[Lemma~6.3.6]{SW13} gives an object of $\M_\infty$ on each domain.  These objects agree on overlaps under that natural isomorphism and glue on $U$.  The constructions for varying $U$ are compatible by the same functoriality.

Conversely, an object of $\M_\infty$ over an affinoid perfectoid $T$ satisfies the rank-two condition and hence induces $T\to Z$.  At every geometric point its Fargues--Fontaine modification has the form \eqref{eq:modification}, so $D_{\FF}$ is nonzero there.  Since every point of $T$ has a rank-one generalization with the same support, the pullback of $D_{\FF}$ is nonzero at every point of $T$.  The map therefore factors through $Z^\times$.  This proves the identification on perfectoid test spaces.

Composing with the natural finite-level maps $\M_\infty\to M_{U,C}$ gives compatible actual adic morphisms from $Z^\times$ to every finite level.
\end{proof}

Since $D_{\FF}$ is analytic and $Z\cap B^n$ is affinoid perfectoid, there exists an integer $A_B$ such that
\[
|D_{\FF}(y)|\le p^{A_B}\qquad(y\in Z\cap B^n).
\tag{4.9}\label{eq:DFFbound}
\]

\subsection{Descent of the determinant valuation}

\begin{lemma}\label{lem:nh}
For every height $h$, there is an integer $\nu_h$ such that every integral Tate frame $x$ above $M_{U_0,C}^{(h)}$ satisfies $v_pD_{\FF}(x)=\nu_h$. \end{lemma}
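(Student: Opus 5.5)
Since $D_{\FF}$ is $\GL_n(\Qp)$-equivariant with character $\det$ by \eqref{eq:DFFcov}, changing an integral Tate frame by $g\in\GL_n(\Zp)$ multiplies $D_{\FF}$ by a $p$-adic unit. The valuation $v_pD_{\FF}(x)$ therefore depends only on the image of $x$ in $M_{U_0,C}$. Moreover, by \cref{prop:ZequalsMinfty}, $D_{\FF}(x)\in\Qp^\times$ at every geometric point of $Z^\times$. The plan is to show that $v_pD_{\FF}$ is a locally constant function on $M_{U_0,C}^{(h)}$, and then to compare it with the crystalline determinant of \cref{prop:detnorm} so that the constant is visibly independent of the point.

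\emph{Step 1: the constant.} Identify $v_pD_{\FF}(x)$ with the determinant of the comparison between the Tate lattice and the Dieudonn\'e lattice. At a geometric point, \eqref{eq:modification} says $\mathcal O^n\to\mathcal B'$ is the modification attached to $(\mathcal H,\rho)$. Under the Scholze--Weinstein comparison, $\det f_x/t^{n-2}$ is a $\Qp$-multiple of $\varepsilon$. This scalar should be the product of two determinants:
\begin{itemize}
\item the determinant of the comparison isomorphism $T_p\mathcal H\otimes\mathcal O\to\mathcal E(\mathcal H)$, twisted by $t^{-(n-2)}$;
\item the determinant of $\mathcal E(\rho):\mathcal B\to\mathcal E(\mathcal H)$.
\end{itemize}
The second factor is governed by the crystalline determinant, whose valuation is $h$ by \cref{lem:heightdet} and \cref{prop:detnorm}. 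The first factor should be an integral unit, up to a universal constant depending only on $n$ and on the normalizations of $\varepsilon$ and $t$. I would establish this using the integral comparison of \cite{CKV15,CKVcorr}: the integral Tate module and the Breuil--Kisin/$A_{\mathrm{inf}}$-lattice of $\mathcal H$ over $\mathcal O_{C'}$ are related by a modification of fixed type $(1^{n-2},0^2)$, so its determinant is $\xi^{n-2}$ times a unit. This would give $\nu_h=h+\nu_0$ for a universal integer $\nu_0$.

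\emph{Step 2: alternative route via local constancy and connectedness.} As a fallback I would prove local constancy directly. By \cref{lem:normshell}, the loci $U(|\varphi|=|p^\nu|)$ are open and closed in $U[\varphi\ne0]$. Hence $v_pD_{\FF}$ is locally constant on $Z^\times$, and by $\GL_n(\Zp)$-invariance it descends to a locally constant function on $M_{U_0,C}^{(h)}$. One would then invoke connectedness, but $M_{U_0,C}^{(h)}$ being connected is itself nontrivial. This route therefore still needs an anchor value at one point, such as a point reducing to the framing, and a connectedness input (again from \cite{CKV15}, where connected components are controlled by $\det$). For these reasons Step 1 is the preferred route.

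\emph{Main obstacle.} The hard step is the integral normalization in Step 1: proving that the ``Tate versus Dieudonn\'e'' determinant has valuation independent of the point, including for $p=2$ and for nonclassical points. I would first settle classical $\mathcal O_{C'}$-points using the integral $A_{\mathrm{inf}}$-comparison. I would then extend to all points using the continuity and local constancy from Step 2, together with the fact that every point has a rank-one generalization, as in the proof of \cref{prop:ZequalsMinfty}.
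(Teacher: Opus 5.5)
\textbf{Step~1 has a genuine gap.} Everything rests on the claim that the Tate-versus-Dieudonn\'e factor is a unit up to a universal constant, and your justification cannot produce it.

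\emph{Why the modification type says nothing about $v_p$.} That $T_p\mathcal H$ and the lattice of $\mathcal H$ differ by a modification of type $(1^{n-2},0^2)$ is information at $\infty$, i.e.\ over $B_{\dR}^+$, where $p$ is a unit. On the curve it only says $\operatorname{div}(\det f_x)=(n-2)\infty$. That determines $\det f_x$ up to $\Qp^\times\cdot t^{n-2}\varepsilon$, which is exactly the ambiguity whose valuation is at stake. It is the information the paper uses to get $D_{\FF}\neq0$ in \cref{prop:ZequalsMinfty}, and nothing more.

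\emph{What an integral version would need.} Over $A_{\mathrm{inf}}$, the determinant of $T_p\mathcal H\otimes A_{\mathrm{inf}}\to M_{\mathrm{inf}}(\mathcal H)$ (the Breuil--Kisin--Fargues module) is, in the usual normalizations, a unit times a power of $[\epsilon]-1$ (where $t=\log[\epsilon]$), not of $\xi$. Turning this into a statement about the scalar $D_{\FF}(x)\in\Qp^\times$ would require:
\begin{enumerate}[label=(\alph*)]
\item an integral comparison of $[\epsilon]-1$ with $t$ in $A_{\mathrm{cris}}$;
\item a single integral structure on $\det\mathcal E(\mathcal H)$, used for both of your factors;
\item the $A_{\mathrm{cris}}$-determinant of the crystalline realization of $\rho$. \Cref{lem:heightdet} and \cref{prop:detnorm} give this only over $W(k)$ and after evaluation on the generic fibre.
\end{enumerate}
None of this is in \cite{CKV15}, which is about connected components.

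\emph{Sign.} Your formula has the wrong sign. We have $f_x=\mathcal E(\rho)^{-1}\circ(\text{comparison})$. Replacing $\rho$ by $p\rho$ replaces the transported frame $x$ by $p^{-1}x$ and $D_{\FF}$ by $p^{-n}D_{\FF}$. So one expects $\nu_h=\nu_0-h$, not $h+\nu_0$.

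\textbf{Your fallback, Step~2, is essentially the paper's proof, and it already suffices.} No anchor value is needed: the lemma only asserts that \emph{some} integer works, and a locally constant function on a connected space is constant.

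The paper gets local constancy directly from the fact that $D_{\FF}$ takes values in $\underline{\Qp^\times}$ on $\M_{\infty,C}$. \Cref{lem:normshell} alone only makes the shells open; that they cover $Z^\times$ comes from this same fact. The function is $\GL_n(\Zp)$-invariant by \eqref{eq:DFFcov}, so it descends along the pro-\'etale torsor $\M_{\infty,C}\to M_{U_0,C}$.

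What you must actually supply is the connectedness of $M^{(h)}_{U_0,C}$:
\begin{enumerate}[label=(\roman*)]
\item Check Hodge--Newton irreducibility, i.e.\ that the simple-coroot coefficients of $\mu-\nu_b$ are all positive.
\item Apply \cite{CKV15,CKVcorr}: the hyperspecial components over $\mathbf C_p$ form a torsor under $\Qp^\times/\Zp^\times\simeq\mathbf Z$.
\item Identify the label of the height-$h$ locus as $-h$ via \cref{lem:heightdet}, so that each height locus is a single component.
\item Pass from $\mathbf C_p$ to $C$ by \cite[Theorem~3.2.1]{Con99}.
\end{enumerate}
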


\begin{proof}
On $\M_{\infty,C}$, the determinant takes values in $\underline{\Qp^\times}$.  Since $v_p:\Qp^\times\to\mathbf Z$ is locally constant for the $p$-adic topology, $v_p\circ D_{\FF}$ is locally constant.  The map $\M_{\infty,C}\longrightarrow M_{U_0,C}$ is the pro-\'etale $U_0=\GL_n(\Zp)$-torsor of integral Tate frames.  By \eqref{eq:DFFcov}, changing the frame by $g\in U_0$ multiplies $D_{\FF}$ by $\det(g)\in\Zp^\times$, so the valuation is $U_0$-invariant and descends to a locally constant integer-valued function on $M_{U_0,C}$.

We show that a fixed-height component is geometrically connected.  For the datum $\mu=(1,1,0,\ldots,0),\; \nu_b=(2/n,\ldots,2/n)$, one has
\[
\mu-\nu_b=((n-2)/n,(n-2)/n,-2/n,\ldots,-2/n).
\]
Its coefficients in the simple-coroot basis are the first $n-1$ partial sums, namely $(n-2)/n$ for the first and $2(n-j)/n$ for $2\le j\le n-1$, all strictly positive.  Thus the datum is Hodge--Newton irreducible.  By the connected-component theorem \cite[\S5.1.3 and Theorem~5.1.10]{CKV15} (together with its corrigendum \cite{CKVcorr}), the hyperspecial geometric components over $\mathbf C_p$ form a torsor under
\[
G^{\mathrm{ab}}(\Qp)/\det(U_0)
=\Qp^\times/\Zp^\times\simeq\mathbf Z.
\]
Here the last identification is induced by $v_p$.  Take the framing lattice as origin and use the convention of transporting Dieudonn\'e lattices back along $M(\rho)^{-1}$.  The determinant valuation of the transported lattice is $-v_p(\det M(\rho))=-h$ by \cref{lem:heightdet}; this is its Kottwitz label.  Thus each fixed-height locus over $\mathbf C_p$ is one connected component.

Choose an embedding $\mathbf C_p\hookrightarrow C$ compatible with $K$.  Each such component is quasi-separated and has a $\mathbf C_p$-rational point, since $\mathbf C_p$ is algebraically closed.  By \cite[Theorem~3.2.1]{Con99}, it remains connected after any complete extension of $\mathbf C_p$.  Hence $M_{U_0,C}^{(h)}$ is geometrically connected.  The descended locally constant function is therefore constant on this locus; denote its value by $\nu_h$.
\end{proof}

\section{Uniform bounds for Tate lattices}\label{sec:uniform}
Let $V\subset\widetilde{\mathbb H}(C')$ be the rational Tate space of an actual deformation, and let $\Lambda\subset V$ be its transported integral Tate lattice.  For a $\Zp$-basis $x=(x_1,\ldots,x_n)$ of $\Lambda$, define $\Delta_2(\Lambda)=\Delta_2(x)$ and $v_pD_{\FF}(\Lambda)=v_pD_{\FF}(x)$. These are basis independent.  Indeed, replacing $x$ by $xg$ with $g\in\GL_n(\Zp)$ sends the vector of $2\times2$ minors through $\bigwedge^2g$; both $\bigwedge^2g$ and its inverse have integral coefficients, so the maximum norm is unchanged.  Also $D_{\FF}(xg)=\det(g)D_{\FF}(x)$ with $\det(g)\in\Zp^\times$.

\begin{theorem}\label{thm:uniformlattice}
Fix $\nu\in\mathbf Z$ and $R>0$.  Let $\Lambda$ be an actual integral Tate lattice over a complete algebraically closed extension $C'/C$ such that $v_pD_{\FF}(\Lambda)=\nu, \; \Delta_2(\Lambda)\le R$. Then
\[
\Lambda\subset p^{-N}B(C')
\]
with
\[
N=
\max\left\{
0,
\left\lceil(\ell+1)\log_p(R/c_B)+A_B+\nu\right\rceil
\right\}.
\tag{5.1}\label{eq:Nbound}
\]
The bound is independent of the period point, the deformation, and $C'$.
\end{theorem}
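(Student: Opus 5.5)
The plan is to compare the actual lattice $\Lambda$ with the normalized lattice $L_0:=V\cap B(C')$ of \cref{lem:tatelattice}, using elementary divisors. By \cref{lem:tatelattice}, $L_0$ is a full $\Zp$-lattice in $V$, and every $\Zp$-basis of $L_0$ lies in $K_B$. Any such basis $y$ then satisfies $c(y)\ge c_B$ by (3.12). Since $y$ is a frame of the same rational Tate space, it also lies in $Z^\times\cap B^n$, so $|D_{\FF}(y)|\le p^{A_B}$ by \eqref{eq:DFFbound}. By the elementary divisor theorem over $\Zp$, I choose bases $y$ of $L_0$ and $x$ of $\Lambda$ with
\[
x_i=p^{-a_i}y_i,\qquad a_i\in\mathbf Z .
\]
Both $K_B$ and the invariants $\Delta_2(\Lambda)$, $v_pD_{\FF}(\Lambda)$ are unchanged under $\GL_n(\Zp)$ and under permutations of the basis. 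I may therefore order $a_1\ge a_2\ge\cdots\ge a_n$. Since $L_0\subset B(C')$ and $B$ is a $\Zp$-module with $p^{-1}B\supset B$, the containment $\Lambda\subset p^{-N}B(C')$ follows once $a_1\le N$. So the whole task is an upper bound on the largest elementary divisor $a_1$.

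Two inputs are available. First, by \eqref{eq:DFFcov} with $g=\diag(p^{-a_i})$,
\[
v_pD_{\FF}(x)=v_pD_{\FF}(y)-\textstyle\sum_i a_i ,
\]
so $\sum_ia_i=v_pD_{\FF}(y)-\nu\ge -A_B-\nu$. Second, the $2\times2$ minors of $Q(x)$ on columns $\{1,j\}$ equal $p^{a_1+a_j}$ times those of $Q(y)$, because $q$ is $\Qp$-linear. Taking the pair $(I,j)$ realizing $c(y)$, with $2\le j\le\ell+1$, gives
\[
p^{a_1+a_j}c_B\le\Delta_2(\Lambda)\le R .
\]
Using $a_j\ge a_{\ell+1}$, this yields $a_1+a_{\ell+1}\le\log_p(R/c_B)$.

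The remaining step, which I expect to be the main obstacle, is to turn these into a bound on $a_1$ alone. The form of \eqref{eq:Nbound}, with the factor $\ell+1$ in front of $\log_p(R/c_B)$ together with $A_B+\nu$, suggests the following route. Apply the minor estimate not only to $y$ but also to reordered bases. Because $K_B$ is stable under permutations, the lower bound $c_B$ holds for whichever column is placed first. This gives pairwise bounds of the shape $a_i+a_{j(i)}\le\log_p(R/c_B)$ for roughly $\ell+1$ choices of the leading column. Summing these and comparing with the determinant identity should bound $a_1$ by $(\ell+1)\log_p(R/c_B)+A_B+\nu$. Here the determinant identity controls the tail indices beyond $\ell+1$, and the fact that $n=2\ell+1$ makes the counting close up.

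Throughout I must track the sign conventions carefully: whether $D_{\FF}$ scales by $\det g$ or $\det g^{-1}$ when passing from $y$ to $x$, and which direction of the bound $|D_{\FF}(y)|\le p^{A_B}$ is actually used. If the naive summation gives the wrong direction, I will instead compare $\Lambda^\vee$-type data, or use $L_0\supset V_0\cap V$ to obtain a complementary bound on the smallest $a_n$. Uniformity in the deformation, the period point and $C'$ is automatic, since $c_B$ and $A_B$ depend only on $B$.
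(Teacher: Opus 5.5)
Your setup is the paper's, and it is correct up to the last step. You compare $\Lambda$ with $L_0=V\cap B(C')$ by elementary divisors, order $a_1\ge\cdots\ge a_n$, and derive $a_1+a_{\ell+1}\le\log_p(R/c_B)$ from the pair $\{1,j\}$ with $2\le j\le\ell+1$. You also get $S=\sum_ia_i=v_pD_{\FF}(y)-\nu\ge -A_B-\nu$. Your signs are right, so the fallbacks you list (dual lattices, a bound on $a_n$ via $V_0$) are not needed.

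The gap is the final deduction, which you leave as a heuristic. The mechanism you suggest, summing pair bounds over several leading columns, does not close as sketched. For instance, take leading columns $1,\dots,\ell+1$, put the columns with the largest remaining $a_i$ in positions $2,\dots,\ell+1$, and sum. Combined with $S\ge -A_B-\nu$ and the ordering, this only yields $a_\ell\le(\ell+1)\log_p(R/c_B)+A_B+\nu$, which says nothing about $a_1$. The factor $\ell+1$ in \eqref{eq:Nbound} does not come from $\ell+1$ different pairs.

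The missing idea is to use the ordering to bound $S$ from above, with only the single pair bound for the first column, weighted by $\ell+1$. Since $a_2,\dots,a_\ell\le a_1$, and (because $n=2\ell+1$) the $\ell$ indices $\ell+2,\dots,n$ satisfy $a_i\le a_{\ell+1}$, you get
\[
-A_B-\nu\le S\le \ell a_1+(\ell+1)a_{\ell+1}\le \ell a_1+(\ell+1)\bigl(\log_p(R/c_B)-a_1\bigr)=(\ell+1)\log_p(R/c_B)-a_1 .
\]
This is the paper's identity $(\ell+1)(a_1+a_{\ell+1})-S-a_1=\sum_{i=2}^{\ell}(a_1-a_i)+\sum_{i=\ell+2}^{n}(a_{\ell+1}-a_i)\ge0$ in another form. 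It gives $a_1\le(\ell+1)\log_p(R/c_B)+A_B+\nu$. Since $a_1$ is an integer, $a_i\le a_1\le N$ for every $i$, hence $\Lambda\subset p^{-N}L_0\subset p^{-N}B(C')$. With this inequality in place of your summation sketch, your argument is exactly the paper's proof.
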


\begin{proof}
Put $V=\Lambda[1/p]$ and normalize it by the canonical lattice $L_0:=V\cap B(C')$ of \cref{lem:tatelattice}.  By the elementary-divisor theorem, choose a basis $y=(y_1,\ldots,y_n)$ of $L_0$ and integers $a_1\ge a_2\ge\cdots\ge a_n$ such that
\[
\Lambda=\bigoplus_{i=1}^n\Zp\,p^{-a_i}y_i.
\tag{5.2}
\]
No sign condition on the $a_i$ is imposed.

By \cref{lem:tatelattice}, $y\in K_B$.  Since $V$ is the rational Tate space of an actual deformation, $y$ differs from an actual Tate frame by an element of $\GL_n(\Qp)$.  The locus $Z$ is $\GL_n(\Qp)$-stable, so $y\in Z\cap B^n$.  The definition of $c_B$ then implies that among the column pairs $\{1,j\}$ with $2\le j\le\ell+1$, at least one has a $2\times2$ row minor of norm at least $c_B$.

Passing from $y_i$ to $p^{-a_i}y_i$ multiplies a minor using columns $1,j$ by the absolute-value factor $p^{a_1+a_j}$.  Since $j\le\ell+1$ and $a_j\ge a_{\ell+1}$,
\[
\Delta_2(\Lambda)\ge c_Bp^{a_1+a_{\ell+1}}.
\]
Using $\Delta_2(\Lambda)\le R$ gives
\[
a_1+a_{\ell+1}\le\log_p(R/c_B).
\tag{5.3}\label{eq:minor-sum}
\]

Put $S=\sum_i a_i$.  Determinant covariance gives
\[
D_{\FF}(p^{-a_1}y_1,\ldots,p^{-a_n}y_n)
=p^{-S}D_{\FF}(y).
\]
Taking valuations and using $v_pD_{\FF}(\Lambda)=\nu$,
\[
S=v_pD_{\FF}(y)-\nu.
\tag{5.4}
\]
Since $y\in Z\cap B^n$, \eqref{eq:DFFbound} gives $|D_{\FF}(y)|\le p^{A_B}$, equivalently
\[
S\ge-A_B-\nu.
\tag{5.5}\label{eq:S-lower}
\]
Finally,
\[
\begin{aligned}
(\ell+1)(a_1+a_{\ell+1})-S-a_1
&=\sum_{i=2}^{\ell}(a_1-a_i)
+\sum_{i=\ell+2}^{n}(a_{\ell+1}-a_i)\ge0.
\end{aligned}
\]
Combining \eqref{eq:minor-sum} and \eqref{eq:S-lower},
\[
a_1\le(\ell+1)\log_p(R/c_B)+A_B+\nu.
\tag{5.6}
\]
Thus every $a_i\le N$, so
\[
\Lambda\subset p^{-N}L_0\subset p^{-N}B(C').
\]
\end{proof}

The special column positions in \eqref{eq:minor-sum} are essential: the first evaluation is nonzero and the first $\ell+1$ have rank at least two, so one of the pairs $\{1,j\}$ with $2\le j\le\ell+1$ carries a uniformly nonzero minor.  These positions control $a_1+a_{\ell+1}$, and together with the total determinant exponent $S$ this bounds the largest elementary divisor $a_1$.

For later use, define
\[
Y_{N,\nu}:=(p^{-N}B)^n\cap Z\cap\{|D_{\FF}|=p^{-\nu}\}.
\tag{5.7}
\]
Multiplication by $p^{-N}$ is an automorphism of the universal cover, so $p^{-N}B$ is affinoid perfectoid.  The intersection $(p^{-N}B)^n\cap Z$ is therefore affinoid perfectoid.  By \cref{lem:normshell}, the norm shell $|D_{\FF}|=p^{-\nu}$ is the intersection of two rational domains, hence $Y_{N,\nu}$ is affinoid perfectoid.  Since $D_{\FF}$ does not vanish there, \cref{prop:ZequalsMinfty} supplies an actual adic morphism
\[
f_{N,\nu}:Y_{N,\nu}\longrightarrow M_{U_0,C}.
\tag{5.8}
\]
The domain is stable under integral changes of frame.  If $\nu=\nu_h$, then every geometric point of $M_{U_0,C}^{(h)}$ with $\Delta_2\le R$ lies in the image for the value of $N$ in \eqref{eq:Nbound}.

\section{Compact Hodge sublevels and Stein exhaustion}\label{sec:stein}

\subsection{The complementary-minor comparison}

\begin{lemma}\label{lem:complementary}
Let $\gamma$ be an integral Tate frame above a geometric point $x\in M_{U_0,C}^{(h)}$, and let $Q$ be the matrix of the quasi-logarithms of the transported Tate frame.  Then
\[
\Delta_2(Q)\le \eta_h^{-1}H_{\mathrm{Pl}}(x),\qquad \eta_h=p^{-h}.
\tag{6.1}\label{eq:complement-bound}
\]
The estimate is stable under complete algebraically closed scalar extension.
\end{lemma}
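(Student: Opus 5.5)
Compare two descriptions of the same rank-two datum at the point $x$: the quasi-logarithm matrix $Q$ of the Tate frame, and the Hodge filtration transported by the crystalline framing $\alpha$. By \cref{prop:univcover-sections}, the columns of $Q$ are the evaluations $i^*x_j\in M(\mathbb H)\otimes C$ of the Tate sections. By the Scholze--Weinstein description of $\M_\infty$ (the modification \eqref{eq:modification}), their $C$-span is a two-dimensional subspace $W^\perp\subset M(\mathbb H)\otimes C$. Under the crystalline identification this subspace is the image of $\Omega_x=\omega_{\mathcal H^\vee}$ in $D_{\mathcal H,x}$, i.e.\ the annihilator of $L_x$. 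The argument then runs in three steps.

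\emph{Step 1: factor $Q$ through the Hodge-Tate map.} I will write
\[
Q=r_\rho(x)^{-1}\circ\iota\circ\HT^\vee\circ\gamma .
\]
Here $\gamma:\Zp^n\to T_p\mathcal H$ is the integral frame, $\HT^\vee:T_p\mathcal H\otimes\mathcal O_C\to\Omega_x\otimes\mathcal O_C$ is the integral (dual) Hodge--Tate map, $\iota:\Omega_x\hookrightarrow D_{\mathcal H,x}$ is the integral inclusion, and $r_\rho(x)^{-1}=\beta$ is the rational inverse crystalline framing. The first three factors are integral with respect to the integral model of $D$. Choosing an integral basis $\omega_1,\omega_2$ of $\Omega_x$, we get $Q=\beta\,P\,H$, where
\begin{itemize}
\item $P$ is the $n\times 2$ matrix of $\iota$, with integral entries;
\item $H$ is a $2\times n$ matrix with entries in $\mathcal O_{C}$.
\end{itemize}
By Cauchy--Binet, every $2\times2$ minor of $Q$ is a sum of products of a $2\times2$ minor of $\beta P$ and a $2\times 2$ minor of $H$. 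Hence
\[
\Delta_2(Q)\le \max_{|I|=2}\bigl|\det(\beta P)_{I,\{1,2\}}\bigr|.
\]
Equivalently, this is the norm of $\bigwedge^2\beta(\omega_1\wedge\omega_2)$ in $\bigwedge^2 M(\mathbb H)$.

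\emph{Step 2: Jacobi's complementary-minor identity.} Since $\beta=(\alpha^\vee)^{-1}$, we have $\bigwedge^2\beta=\det(\alpha)^{-1}\cdot\bigl(\bigwedge^{n-2}\alpha\bigr)^{\vee,\ast}$ under the Hodge-star identification $\bigwedge^2\simeq(\bigwedge^{n-2})^\vee\otimes\det$. This identification is integral and isometric for the chosen integral bases, with signs only. Under the perfect pairing, $\omega_1\wedge\omega_2$ corresponds up to an integral unit to a generator $e_L$ of $\lambda_x=\det L_x$, because $L$ and $\Omega$ are mutual annihilators and the integral model pairing is perfect. Therefore the coordinates of $\bigwedge^2\beta(\omega_1\wedge\omega_2)$ are, up to sign, $\det\alpha(x)^{-1}\ell_a(e_L)$. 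Their maximum norm is $|\det\alpha(x)|^{-1}H_{\mathrm{Pl}}(x)$.

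\emph{Step 3: conclude.} By \cref{prop:detnorm}, $|\det\alpha(x)|=\eta_h$, which gives \eqref{eq:complement-bound}. Stability under scalar extension holds because every quantity involved is a norm of an evaluation at a point, and all such norms are unchanged by complete extension.

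The main obstacle is Step 1, specifically integrality of the factorization. One has to check that the quasi-logarithm of an integral Tate vector, read in $D_{\mathcal H,x}$ via the crystalline comparison (isogeny invariance of the universal cover composed with $r_\rho$), equals the integral Hodge--Tate image landing in $\Omega_x\otimes\mathcal O_C$ on the integral model, with no extra denominators. I would first try to obtain this from \cite[Proposition~5.1.6]{SW13} and the construction of $q_T$ through $E\mathcal H$ over $\mathcal O_{C}$: for $v\in T_p\mathcal H$, $s_{\mathcal H}(v)$ lies in $\omega_{\mathcal H^\vee}$-part of $E\mathcal H(\mathcal O_C)$ integrally. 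The norm comparison in Step 2 also requires care that the model metric on $\lambda$ agrees with the one induced from $\bigwedge^2\Omega$ through the perfect integral pairing. This follows from duality of the integral Hodge sequence (2.1).
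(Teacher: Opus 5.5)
Your proposal is correct and follows essentially the same route as the paper. You factor $Q=(\beta\circ\iota_\Omega)\,A_{\HT}$ through the two-dimensional space $\Omega$, using \cite[Proposition~5.1.6]{SW13} together with integrality of the Hodge--Tate map, so the $2\times2$ minors of $A_{\HT}$ have norm at most $1$. You then identify the minors of $\beta\circ\iota_\Omega$ with $\pm\ell_a/\det\alpha$ by Jacobi's complementary-minor identity (your Hodge-star formulation is just the coordinate-free version of the paper's integral basis of $D$ adapted to $L$) and conclude with \cref{prop:detnorm}.
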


\begin{proof}
Work over the valuation ring of the geometric point.  Choose an integral basis $e_1,\ldots,e_n$ of $D$ whose first $n-2$ vectors form a basis of $L$.  Let $P$ be the matrix of the crystalline framing $\alpha$ in this basis and the fixed reference basis of $N$.  In the dual bases, $r_\rho=\alpha^\vee$ has matrix $P^T$, so $\beta=r_\rho^{-1}:D_{\mathcal H,\eta}\to N^\vee\otimes_K\mathcal O_X$ has matrix $P^{-T}:=(P^T)^{-1}$.  The last two vectors of the dual basis form a basis of $\Omega$.  Their images under $\beta$ form the $n\times2$ matrix
\[
J=(P^{-T})_{\{1,\ldots,n\},\{n-1,n\}}.
\tag{6.2}
\]
For every two-element row set $I$, Jacobi's identity gives
\[
\det J_I=\pm\frac{\det P_{I^c,\{1,\ldots,n-2\}}}{\det P}.
\tag{6.3}
\]
The numerators are the Pl\"ucker coordinates of the $(n-2)$-plane $\alpha(L)$.  By \cref{prop:detnorm},
\[
\max_{|I|=2}|\det J_I|=\eta_h^{-1}H_{\mathrm{Pl}}(x).
\tag{6.4}\label{eq:Jplucker}
\]

Let $C'$ be the complete algebraically closed field of the geometric point.  The integral Hodge--Tate map $\operatorname{HT}_{\mathcal H}:T_p\mathcal H\otimes_{\Zp}\mathcal O_{C'}\to\Omega=\omega_{\mathcal H^\vee}$ has a matrix $A_{\HT}\in\operatorname{Mat}_{2\times n}(\mathcal O_{C'})$ in the Tate frame $\gamma$ and the chosen basis of $\Omega$.  Its integrality follows from Cartier duality: a Tate vector induces a morphism $\mathcal H^\vee\to\mu_{p^\infty}$, and pulling back the invariant differential of $\mu_{p^\infty}$ gives an integral differential.  Consequently the $2\times2$ minors of $A_{\HT}$ have norm at most $1$.

Let $\iota_\Omega:\Omega\hookrightarrow D_{\mathcal H}$ be the Hodge inclusion, let $e_i^0$ be the standard basis of $\Zp^n$, and put $s_i=\widetilde\rho^{-1}(\gamma(e_i^0))$.  By \cite[Proposition~5.1.6(ii)--(iii) and the following Cartier-dual construction]{SW13}, the restriction of the quasi-logarithm to Tate vectors agrees, after extension of scalars to $C'$, with $\iota_\Omega\circ\operatorname{HT}_{\mathcal H}$.  Functoriality with respect to $\rho$ gives
\[
q_{\mathbb H}(s_i)
=\beta\bigl(q_{\mathcal H}(\gamma(e_i^0))\bigr)
=\beta\bigl(\iota_\Omega(\operatorname{HT}_{\mathcal H}(\gamma(e_i^0)))\bigr).
\]
Since $J$ represents $\beta\circ\iota_\Omega$, the preceding identity reads
\[
Q=JA_{\HT}.
\tag{6.5}
\]
Because the intermediate dimension is exactly $2$, each $2\times2$ minor factors as
\[
\det Q_{I,J'}=\det J_I\,\det(A_{\HT})_{\{1,2\},J'}.
\]
Taking absolute values and maxima and using \eqref{eq:Jplucker} proves \eqref{eq:complement-bound}.
\end{proof}

\subsection{Compact images at finite level}

\begin{lemma}\label{lem:compactimage}
Let $Y$ be a quasi-compact analytic adic space and let $T$ be a taut analytic adic space locally of finite type over a complete nontrivially valued field.  For an adic morphism $f:Y\to T$, its image in the associated Hausdorff Berkovich space $T_{\Berk}$ is compact.  If $Y$ is affinoid perfectoid, this image is also the image of its rank-one geometric points.
\end{lemma}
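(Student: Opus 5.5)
The first assertion will follow by factoring $f$ through the maximal Hausdorff quotient. For an analytic adic space $T$ that is taut and locally of finite type over a complete nontrivially valued field, Huber's comparison identifies $T_{\Berk}$ with the maximal Hausdorff quotient of $|T|$. Concretely, $T_{\Berk}$ is the set of rank-one points, and the map $\mathrm{sep}_T:|T|\to T_{\Berk}$ sends each point to its unique maximal generalization. This map is continuous, and tautness is what makes the quotient Hausdorff. Next, $|Y|$ is quasi-compact and $f$ is continuous, so $f(|Y|)$ is quasi-compact in $|T|$. Therefore $\mathrm{sep}_T(f(|Y|))$ is a quasi-compact subset of a Hausdorff space, hence compact. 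I take ``image in $T_{\Berk}$'' to mean exactly $\mathrm{sep}_T(f(|Y|))$.

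For the second assertion, I will show that this image equals the set of images of rank-one geometric points. Let $y\in Y$. The analytic adic space $Y$ has a maximal generalization $\tilde y$ of $y$, and $\tilde y$ has rank one. Because $f$ is continuous and adic, it sends generalizations to generalizations and preserves rank-one points: the rank-one valuation on $\widehat{\kappa(\tilde y)}$ restricts along $\kappa(f(\tilde y))\to\kappa(\tilde y)$ to a rank-one valuation, which is nontrivial since $f$ is adic over the nontrivially valued base. Hence $f(\tilde y)$ is a rank-one generalization of $f(y)$, and so $\mathrm{sep}_T(f(y))=f(\tilde y)$.

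It remains to pass from rank-one points to geometric points. Let $\widehat{\kappa(\tilde y)}$ be the completed residue field and $C_y$ a completed algebraic closure of it. The point $\tilde y$ then comes from a morphism $\Spa(C_y,\mathcal O_{C_y})\to Y$. Composing with $f$ gives the geometric point of $T$ whose image in $T_{\Berk}$ is $f(\tilde y)$. So every element of $\mathrm{sep}_T(f(|Y|))$ is the image of a rank-one geometric point of $Y$. The reverse inclusion is immediate, since the image of such a point is already a rank-one point of $T$.

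The main obstacle is justifying that the composite $|Y|\to|T|\to T_{\Berk}$ is the correct notion of image when $Y$ is perfectoid rather than of finite type. The maximal-generalization argument only uses that $Y$ is an analytic adic space, so I would check carefully that rank-one maximal generalizations exist in that generality; this is standard for analytic Huber pairs. I would also make sure tautness is invoked to get Hausdorffness of $T_{\Berk}$ and continuity of $\mathrm{sep}_T$, citing Huber's book, \S8.3.
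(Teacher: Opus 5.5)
Your proposal is correct and follows essentially the same route as the paper. Compactness comes from continuity of $\operatorname{sep}_T:|T|\to T_{\Berk}$, quasi-compactness of $|Y|$, and Hausdorffness of $T_{\Berk}$ for taut $T$; the second assertion comes from the fact that $f$ carries the rank-one maximal generization of a point to that of its image (your explicit rank and nontriviality check fills in the paper's one-line claim), followed by passage to a completed algebraic closure of the residue field.
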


\begin{proof}
The separation map $\operatorname{sep}_T:|T|\to T_{\Berk}$ is continuous.  Hence $\operatorname{sep}_T\circ f$ has compact image because $|Y|$ is quasi-compact and $T_{\Berk}$ is Hausdorff; compare \cite[Chapter~8]{Hub96} and \cite[Definition~5.4, Remark~5.6, Theorem~7.9]{Hen16}.

Every point of an analytic adic space has a unique maximal generization, of rank one, and analytic morphisms preserve maximal generizations.  Passing from the completed residue field of such a point to a completed algebraic closure produces a geometric point with the same image under separation.  This proves the second assertion.
\end{proof}

\begin{proposition}\label{prop:Hodgecompact}
For every $R>0$, the Hodge sublevel $\{x\in M_{U_0,C}^{(h)}:H_{\mathrm{Pl}}(x)\le R\}$ is compact in the Berkovich topology.
\end{proposition}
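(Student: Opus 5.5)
The plan is to exhibit the sublevel as a closed subset of a compact image. Put $S_R:=\{x\in M_{U_0,C}^{(h)}:H_{\mathrm{Pl}}(x)\le R\}$.

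First, bound the quasi-logarithmic minors on $S_R$. Let $x\in S_R$ be a geometric point and let $\gamma$ be an integral Tate frame above it. \Cref{lem:complementary} gives
\[
\Delta_2(\Lambda)\le p^{h}H_{\mathrm{Pl}}(x)\le p^hR=:R',
\]
where $\Lambda$ is the transported Tate lattice. \Cref{lem:nh} gives $v_pD_{\FF}(\Lambda)=\nu_h$. Applying \cref{thm:uniformlattice} with $\nu=\nu_h$ and $R'$ then yields an integer $N$, independent of $x$ and of the field $C'$, such that $\Lambda\subset p^{-N}B(C')$. Any $\Zp$-basis of $\Lambda$ is therefore a point of
\[
Y_{N,\nu_h}=(p^{-N}B)^n\cap Z\cap\{|D_{\FF}|=p^{-\nu_h}\}.
\]
By the remark after (5.8), $x$ is the image of this point under the actual adic morphism $f_{N,\nu_h}:Y_{N,\nu_h}\to M_{U_0,C}$.

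Second, pass from geometric points to all Berkovich points. The space $Y_{N,\nu_h}$ is affinoid perfectoid, hence quasi-compact, and $M_{U_0,C}$ is taut: it is partially proper and locally of finite type, being a Rapoport--Zink generic fibre. So \cref{lem:compactimage} shows that $W:=\operatorname{sep}(f_{N,\nu_h}(Y_{N,\nu_h}))$ is compact in the Berkovich space, and that $W$ equals the image of the rank-one geometric points. To treat an arbitrary Berkovich point $x\in S_R$, I would pass to a completed algebraic closure $C'$ of $\widehat{\kappa(x)}$. This gives a geometric point over $C'$ with the same value of $H_{\mathrm{Pl}}$, since $H_{\mathrm{Pl}}$ is defined through model norms and these are stable under extension. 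I would then lift it to an integral Tate frame over $C'$, using that $\M_\infty\to M_{U_0}$ is a pro-étale torsor, which is surjective on geometric points. The first step applies over $C'$ because all the estimates are uniform in $C'$. The resulting point of $Y_{N,\nu_h}(C')$ maps to $x$. Hence
\[
S_R\subset W\cap M^{(h)}_{U_0,C}.
\]

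Third, close up. The component $M_{U_0,C}^{(h)}$ is open and closed in $M_{U_0,C}$, since the height is locally constant. The function $H_{\mathrm{Pl}}$ is continuous on the Berkovich space, being a maximum of model norms of sections. So $S_R=W\cap M^{(h)}_{U_0,C}\cap\{H_{\mathrm{Pl}}\le R\}$ is a closed subset of the compact Hausdorff set $W$, and is therefore compact.

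The main obstacle is the second step: making sure that every Berkovich point, including non-rigid ones, really lies in the image of $f_{N,\nu_h}$. Two things have to hold. The Tate frame must lift over an arbitrary algebraically closed $C'$ and not only over $C$, and the uniform bound of \cref{thm:uniformlattice} must apply to that lift. A further point needs checking: $W$ should sit inside $M^{(h)}$ or at least meet it in a closed set. This follows because the height component is clopen in the target.
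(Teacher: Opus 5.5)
Your proposal is correct and follows the paper's proof essentially step by step: pass to a completed algebraic closure, lift to an integral Tate frame, combine \cref{lem:complementary}, \cref{lem:nh} and \cref{thm:uniformlattice} to land in $Y_{N,\nu_h}$, then use tautness, \cref{lem:compactimage} and continuity of $H_{\mathrm{Pl}}$. The only cosmetic difference is the order of steps. The paper first restricts to the clopen, hence quasi-compact, preimage $Y^{(h)}_{N,\nu_h}=f_{N,\nu_h}^{-1}(M^{(h)}_{U_0,C})$ and takes its image in the taut space $M^{(h)}_{U_0,C}$. You instead take the image of all of $Y_{N,\nu_h}$ in $M_{U_0,C}$ and intersect with the clopen height component afterwards; both are equally valid.
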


\begin{proof}
Let $x$ be a Berkovich point of the sublevel and choose a completed algebraic closure $C'=\widehat{\overline{\widehat{\kappa(x)}}}$ of its completed residue field.  The resulting geometric point $x':\Spa(C',\mathcal O_{C'})\longrightarrow M_{U_0,C}^{(h)}$ has the same Berkovich image as $x$, and model norms are preserved by scalar extension.  Thus $H_{\mathrm{Pl}}(x')\le R$.

Choose an integral Tate frame above $x'$.  By \cref{lem:complementary}, $\Delta_2\le\eta_h^{-1}R$, while \cref{lem:nh} fixes the determinant valuation to $\nu_h$.  Theorem~\ref{thm:uniformlattice} gives an integer $N$ depending only on $R$ and $h$, so the frame defines a point of $Y_{N,\nu_h}$ mapping to $x'$.

Let $Y_{N,\nu_h}^{(h)}:=f_{N,\nu_h}^{-1}(M_{U_0,C}^{(h)})$. Height is locally constant, so this is open and closed in $Y_{N,\nu_h}$ and hence quasi-compact.  The preceding argument shows that its finite-level image contains every Berkovich point of the Hodge sublevel.

The Rapoport--Zink generic fibre is partially proper; this is used explicitly in \cite[Lemma~6.1.4]{SW13}.  Thus $M_{U_0,C}^{(h)}$ is taut by \cite[Remark~A.4]{Zav25}.  It is locally of finite type over $\Spa(C,\mathcal O_C)$, so \cite[Lemma~A.8]{Zav25} shows that its associated Berkovich space is without boundary over $C$.  By \cref{lem:compactimage}, the image of $Y_{N,\nu_h}^{(h)}$ is compact.  Since $H_{\mathrm{Pl}}$ is continuous, the sublevel is closed in this compact image and hence compact.
\end{proof}

\subsection{The finite function map}

\begin{lemma}\label{lem:finitecriterion}
Let $F$ be a complete nontrivially valued field and let $f:U\to V$ be a morphism of separated strictly $F$-analytic Berkovich spaces.  Suppose that
\begin{enumerate}[label=\textup{(\roman*)}]
\item the inverse image of every compact subset of $V$ is compact;
\item $f$ is without boundary;
\item every geometric fibre has dimension zero.
\end{enumerate}
Then $f$ is finite.  If $U$ is without boundary over $F$, condition \textup{(ii)} is automatic.
\end{lemma}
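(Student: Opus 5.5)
The plan is to reduce to Berkovich's characterization of finite morphisms: a morphism of $F$-analytic spaces is finite exactly when it is proper (topologically proper and without boundary) and has finite fibres. Conditions (i) and (ii) together say precisely that $f$ is proper in Berkovich's sense \cite[\S1.5]{Ber90}. Since $f$ is then compact, it is in particular closed, and its fibres $f^{-1}(y)$ are compact analytic spaces over $\mathcal H(y)$. So finiteness reduces to showing that every fibre is a finite set.

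For the fibres, fix $y\in V$ and consider $f^{-1}(y)$. This is an $\mathcal H(y)$-analytic space. By (iii), after base change to a completed algebraic closure of $\mathcal H(y)$ it has dimension zero. A zero-dimensional strictly analytic space is discrete; here I would use that dimension is local and that the unit-ball nullstellensatz makes zero-dimensional affinoid algebras finite-dimensional. Because $f^{-1}(y)$ is compact, it is therefore a finite set of points, each with a finite local ring. With topological properness and the absence of boundary, Berkovich's result \cite[Corollary~3.3.8]{Ber90} then shows that a proper morphism with finite fibres is finite. Concretely, each $y$ has an affinoid neighbourhood whose preimage is affinoid and finite over it, and these local statements glue.

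For the last assertion, suppose $U$ is without boundary over $F$, so that $\operatorname{Int}(U/F)=U$. The transitivity formula for relative interiors, $\operatorname{Int}(U/F)\subset\operatorname{Int}(U/V)$, i.e.\ $\operatorname{Int}(U/F)=\operatorname{Int}(U/V)\cap f^{-1}\operatorname{Int}(V/F)$ \cite[Proposition~3.1.3]{Ber90}, then gives $\operatorname{Int}(U/V)=U$. This is condition (ii).

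The main obstacle is the precise citation, specifically matching the separatedness and strictness hypotheses and passing from geometric fibres to fibres over $\mathcal H(y)$. If the direct reference is not available in this generality, I would instead argue locally. Over an affinoid domain $W\subset V$, the preimage is compact and without boundary over $W$, so by \cite[Proposition~3.1.4]{Ber90}-type results it is a finite union of affinoids. Kiehl's theorem would then give coherence of $f_*\mathcal O_U$, and finiteness of fibres makes it finite over $W$.
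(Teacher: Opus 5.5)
Your proposal is correct. For the main assertion it is essentially the paper's argument: (i) and (ii) together give properness in Berkovich's sense, and properness combined with the fibre condition gives finiteness. The difference there is only in packaging. You first use compactness of $f^{-1}(y)$ and zero-dimensionality to show that the fibres are finite sets, and then invoke ``proper with finite fibres implies finite''. The paper instead passes through quasi-finiteness: a boundaryless morphism with zero-dimensional fibres is quasi-finite, and a proper quasi-finite morphism is finite.

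For the last assertion your route genuinely differs. You use the transitivity formula
\[
\Int\bigl(U/\mathcal M(F)\bigr)=\Int(U/V)\cap f^{-1}\Int\bigl(V/\mathcal M(F)\bigr).
\]
Only the inclusion $\Int(U/\mathcal M(F))\subset\Int(U/V)$ is needed, and it gives $U=\Int(U/V)$ in one line. This argument does not use separatedness of $V$. The paper instead factors $f$ through its graph $U\to U\times_FV$, which is a closed immersion because $V$ is separated. It then uses that boundarylessness is preserved under base change (applied to $U\to\mathcal M(F)$) and under composition.

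One bookkeeping remark: the 1990 statements you cite are formulated for good spaces. For general separated strictly analytic spaces, the corresponding facts on relative interiors, properness and proper quasi-finite maps are in \cite{Ber93}, which is what the paper relies on.
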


\begin{proof}
Condition \textup{(i)} makes $f$ topologically proper.  Together with the boundaryless condition this gives properness in the sense of Berkovich; see \cite[Lemma~3.9 and Remark~3.10]{CT21} and \cite[\S1.5]{Ber93}.  A boundaryless morphism with zero-dimensional fibres is quasi-finite, and a proper quasi-finite analytic morphism is finite.

For the last assertion, factor $f$ as its graph followed by projection:
\[
U\xrightarrow{\Gamma_f}U\times_FV\xrightarrow{\pr_2}V.
\]
Since $V$ is separated, the graph is a closed immersion and hence without boundary.  The projection is the base change of $U\to\M(F)$, which is without boundary by hypothesis.  The property is stable under base change and composition.
\end{proof}

\begin{theorem}\label{thm:functionfinite}
On every fixed-height component $X=M_{U_0}^{(h)}$, the function map $\Psi:X\longrightarrow(\A_K^d)^{\an}$ of \eqref{eq:Psi} is finite.
\end{theorem}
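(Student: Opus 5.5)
The plan is to check the three hypotheses of \cref{lem:finitecriterion} for $\Psi_C$, the base change of $\Psi$ to $C$, and then descend finiteness to $K$. Throughout, $X_C$ denotes $M_{U_0,C}^{(h)}$ viewed as a separated strictly $C$-analytic Berkovich space; it is separated because Rapoport--Zink generic fibres are.

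\emph{Topological properness.} I would start with a compact set $\mathcal K\subset(\A^d_C)^{\an}$. It lies in some closed polydisc $\{\max|T_{iab}|\le\rho\}$. By the norm identity \eqref{eq:functionheight}, any $x\in\Psi_C^{-1}(\mathcal K)$ satisfies $H_{\mathrm{Pl}}(x)^w\le\rho$. Hence $\Psi_C^{-1}(\mathcal K)$ sits inside the Hodge sublevel $\{H_{\mathrm{Pl}}\le\rho^{1/w}\}$, which is compact by \cref{prop:Hodgecompact}. Since $\Psi_C$ is continuous, $\Psi_C^{-1}(\mathcal K)$ is closed, so it is a closed subset of a compact Hausdorff set and therefore compact. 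This gives condition (i).

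\emph{No boundary and zero-dimensional fibres.} For condition (ii), the proof of \cref{prop:Hodgecompact} already shows that $X_C$ is partially proper, hence taut, and so without boundary over $C$ by \cite[Lemma~A.8]{Zav25}. The final clause of \cref{lem:finitecriterion} then gives (ii). For condition (iii), \cref{lem:functionmap} says the geometric fibres are discrete. A discrete analytic subspace has dimension zero, and this persists after complete algebraically closed extension because the argument of \cref{lem:functionmap} only uses the \'etaleness of $\pi$, which is stable under base change. With (i)--(iii) in hand, \cref{lem:finitecriterion} shows that $\Psi_C$ is finite.

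\emph{Descent, the expected main obstacle.} It remains to deduce finiteness of $\Psi$ over $K$ from finiteness over $C$. My first attempt would be to establish properness and quasi-finiteness over $K$ directly and avoid descent of finiteness altogether. Topological properness descends because the projection $X_C\to X$ is surjective and continuous. Then every compact set downstairs pulls back to a compact set upstairs, and its image under the projection is compact and contains the preimage in $X$. Boundarylessness of $X$ over $K$ follows in the same way from partial properness. Fibres over $K$ are images of fibres over $C$, so they are also zero-dimensional. Having done this, I would apply \cref{lem:finitecriterion} directly with $F=K$, which also avoids the separate descent results \cite{CT21,Zav25}. The point I would check carefully is that the Berkovich base-change map is surjective and compatible with the relevant preimages; if that argument is delicate, I would instead cite the descent of properness and finiteness along $C/K$ from \cite{CT21,Zav25}.
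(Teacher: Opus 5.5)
Your treatment over $C$ is the same as the paper's: condition (i) via \eqref{eq:functionheight} and \cref{prop:Hodgecompact}, (ii) via partial properness and \cite[Lemma~A.8]{Zav25}, and (iii) via \cref{lem:functionmap}.

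The genuine difference is the last step. The paper descends finiteness from $C$ to $K$ by citing \cite[Theorem~11.5]{CT21}. Your primary route instead pushes only compactness down along the continuous surjection $X_C\to X$ and then applies \cref{lem:finitecriterion} directly with $F=K$. This works, and the step you flagged is fine:
\begin{enumerate}[label=\textup{(\arabic*)}]
\item A compact $\mathcal K\subset(\A^d_K)^{\an}$ lies in a closed polydisc, so its preimage $\mathcal K_C\subset(\A^d_C)^{\an}$ is closed in a compact polydisc and hence compact.
\item Commutativity of the base-change square and surjectivity of Berkovich ground-field extension give $\Psi^{-1}(\mathcal K)=\pr(\Psi_C^{-1}(\mathcal K_C))$, which is therefore compact. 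Equivalently, $\Psi^{-1}(\mathcal K)$ is a closed subset of the $K$-Hodge sublevel, and that sublevel is the image of the $C$-sublevel because $H_{\mathrm{Pl}}$ on $X_C$ is pulled back from $X$.
\item Boundarylessness over $K$ follows from partial properness exactly as over $C$.
\item For (iii) no transfer is needed, since \cref{lem:functionmap} is already stated for $\Psi$ over $K$. If you do argue by transfer, the fibre of $\Psi_C$ over a point $y'$ above $y$ is the base change of $\Psi^{-1}(y)$ to $\widehat{\kappa(y')}$, so the dimensions agree.
\end{enumerate}
Your route avoids the nontrivial descent theorem and needs only surjectivity of ground-field extension. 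It also makes the $C$-level application of the finiteness criterion redundant: only the compactness of \cref{prop:Hodgecompact} is genuinely needed over $C$. The paper's route is shorter given the citation, and your stated fallback is exactly that route.
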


\begin{proof}
First base change to $C$.  Let $K_0\subset((\A_C^d)^{\an})_{\Berk}$ be compact.  The coordinate functions are uniformly bounded on $K_0$.  By \eqref{eq:functionheight}, $\Psi_C^{-1}(K_0)$ is contained in a Hodge sublevel, hence in a compact set by \cref{prop:Hodgecompact}.  Since $K_0$ is closed in the Hausdorff Berkovich affine space, $\Psi_C^{-1}(K_0)$ is compact.

$X_C$ is partially proper over $C$, hence taut by \cite[Remark~A.4]{Zav25}; since it is locally of finite type, its associated Berkovich space is without boundary over $C$ by \cite[Lemma~A.8]{Zav25}.  The target is separated.  By \cref{lem:functionmap}, all geometric fibres are zero-dimensional.  Lemma~\ref{lem:finitecriterion} therefore shows that $\Psi_C$ is finite.

All coordinate functions $F_{iab}$ are defined over $K$.  Finiteness descends under arbitrary complete extensions of the ground field by \cite[Theorem~11.5]{CT21}.  Hence $\Psi$ is finite over $K$.
\end{proof}

\begin{proof}[Proof of \cref{thm:main}]
First work on a fixed-height hyperspecial component $X=M_{U_0}^{(h)}$.  For $m\ge0$ put
\[
X_m:=\left\{x\in X:\max_{i,a,b}|F_{iab}(x)|\le p^m\right\}.
\tag{6.6}
\]
By \cref{thm:functionfinite}, $X_m$ is the inverse image of a closed polydisc under a finite morphism and is therefore affinoid.  The domains cover $X$ and satisfy $X_m\subset\Int(X_{m+1})$. They form an admissible covering: on a quasi-compact analytic domain the finitely many $F_{iab}$ are uniformly bounded, so the domain lies in some $X_m$.

Inside $X_{m+1}$, the domain $X_m$ is given by the Weierstrass inequalities $|p^mF_{iab}|\le1$. Its affinoid algebra is the corresponding completed Weierstrass algebra.  Polynomials in the auxiliary variables are dense, and the auxiliary variables equal the functions $p^mF_{iab}$.  Thus $\mathcal O(X_{m+1})\longrightarrow\mathcal O(X_m)$ has dense image.  This gives the desired Stein exhaustion of each height component.

Enumerate the height components as $M_{U_0}^{(h_j)}$, $j\ge0$, and denote their preceding exhaustions by $X_{j,m}$, $m\ge0$.  Define the diagonal sequence
\[
E_m:=\coprod_{j=0}^{m}X_{j,m}\qquad(m\ge0).
\]
Each $E_m$ is affinoid, with $\mathcal O(E_m)=\prod_{j=0}^{m}\mathcal O(X_{j,m})$, and $E_m\subset\Int(E_{m+1})$.  Every quasi-compact analytic domain meets only finitely many height components.  Its intersection with each such component is quasi-compact and lies in some $X_{j,m}$, so the entire domain lies in $E_m$ for sufficiently large $m$.  Thus the $E_m$ form an admissible exhaustion of $M_{U_0}$.

To describe $E_m$ as a Weierstrass domain of $E_{m+1}$, extend each defining function $p^mF_{iab}$ on an old component $X_{j,m+1}$, $j\le m$, by zero to the other components.  Impose their Weierstrass inequalities together with $|p^{-1}e_{m+1}|\le1$, where $e_{m+1}$ is the idempotent equal to $1$ on $X_{m+1,m+1}$ and $0$ elsewhere.  The last inequality excludes the new component, and the remaining inequalities cut out $X_{j,m}$ in every old component.  The restriction map $\mathcal O(E_{m+1})\to\mathcal O(E_m)$ is the finite product of the dense component restriction maps, with the new factor discarded, and therefore has dense image.

We now pass to arbitrary finite level in the original local Shimura tower, whose hyperspecial level is the working space of \cref{subsec:framing}.  Let $U\subset\GL_n(\Qp)$ be compact open.  Every compact subgroup stabilizes a $\Zp$-lattice, so choose $g\in\GL_n(\Qp)$ with $U':=g^{-1}Ug\subset U_0$.  The standard right Hecke isomorphism $M_U\xrightarrow{\sim}M_{U'}$, induced by $x\mapsto xg$, sends height $h$ to $h-v_p(\det g)$: the corresponding quasi-isogeny has height $-v_p(\det g)$ in our convention $\rho:\mathbb H\dashrightarrow\mathcal H$.  Thus it only translates the height labels by a fixed integer.

Since $U'$ is open, it has finite index in $U_0$, and the transition morphism $M_{U'}\longrightarrow M_{U_0}$ in the original tower is finite \'etale.  The inverse images of the $E_m$ are affinoid, form an admissible exhaustion, and satisfy the same interior inclusions.  Pulling back the defining inequalities makes each stage a Weierstrass domain in the next, so the restriction maps have dense image by the same polynomial approximation argument.  Transporting this exhaustion through the Hecke isomorphism gives the result for $M_U$.  The same construction applied to each height component proves the stated componentwise exhaustion at every finite level.
\end{proof}

\begin{appendices}

\section{An integral PEL realization of the formal moduli space}\label{app:PEL}
We construct the integral morphism used in \cref{prop:generators}.  Throughout this appendix, as in the body of the paper, 
$k=\overline{\mathbf F}_p$, and $\mathfrak M$ denotes the formal Rapoport--Zink space for the framing group $\mathbb H$ of height $n$ and dimension $n-2$ fixed in \cref{subsec:framing}.  The construction includes $p=2$ and is uniform in quasi-isogeny height.  The assumption that $k$ is algebraically closed is used to realize the framing group by a global PEL point.

\begin{theorem}\label{thm:PEL}
Let $p$ be any prime and $k=\overline{\mathbf F}_p$.  There exist a unitary PEL datum of signature $(n-2,2)$ which is split at $p$, a neat prime-to-$p$ level, and a smooth integral PEL moduli scheme $\mathscr S/W(k)$ with universal abelian scheme $\mathcal A$, together with a morphism of formal schemes
\[
\kappa:\mathfrak M\longrightarrow\widehat{\mathscr S}.
\tag{A.1}
\]
Here $\widehat{\mathscr S}$ denotes the $p$-adic completion of $\mathscr S$.
For every $p$-nilpotent $W(k)$-scheme $S$ and every $(\mathcal H_S,\rho)\in\mathfrak M(S)$, the associated abelian scheme $A_S/S$ satisfies
\[
A_S[p^\infty]\simeq\mathcal H_S\times\mathcal H_S^\vee
\tag{A.2}
\]
compatibly with the split endomorphism action and the standard hyperbolic principal quasi-polarization.  The polarization of $A_S$ has fixed degree prime to $p$, the prime-to-$p$ level is integral, and all constructions commute with arbitrary base change and with isomorphisms in $\mathfrak M(S)$.
\end{theorem}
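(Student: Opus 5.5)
The plan is the standard ``unitary trick'' for realizing an EL Rapoport--Zink space inside a PEL Shimura variety. First I would choose an imaginary quadratic field $E/\mathbf Q$ in which $p$ splits, $p=v\bar v$. Then I would take a hermitian space $W$ of dimension $n$ over $E$ with signature $(n-2,2)$ at infinity; such a space exists for every $n$ after adjusting local invariants at an auxiliary finite prime. This gives a PEL datum $(B=E,\ast,V=W,\langle\,,\rangle,\mathcal O_E,\Lambda)$. Because $p$ splits, $E\otimes\Qp=\Qp\times\Qp$, so the unitary group at $p$ is $\Qp^\times\times\GL_n(\Qp)$. I would choose a self-dual $\mathcal O_E\otimes\Zp$-lattice $\Lambda_p=\Lambda_v\oplus\Lambda_{\bar v}$, with $\Lambda_{\bar v}=\Lambda_v^\vee$ under the pairing. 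This makes the datum unramified and hyperspecial at $p$, including $p=2$, since the splitting removes all hermitian subtleties at $p$ and only the $\GL_n$ factor remains. With a neat prime-to-$p$ level $K^p$, Kottwitz's moduli problem (as in \cite{Lan21}) gives a smooth scheme $\mathscr S/\mathcal O_{E,(v)}\to W(k)$. For any object of $\mathscr S$, its $p$-divisible group decomposes as $A[v^\infty]\times A[\bar v^\infty]$, and the polarization identifies $A[\bar v^\infty]\simeq A[v^\infty]^\vee$. The signature condition makes $A[v^\infty]$ of height $n$ and dimension $n-2$ (or $2$, depending on the convention, which I would fix to match $\mathbb H$).

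Second, I would realize the framing group by a point. I would show there is $x_0\in\mathscr S(k)$ with $A_{x_0}[v^\infty]\simeq\mathbb H$. The basic locus of this unitary Shimura variety is nonempty; for instance, a supersingular point exists by Honda--Tate or by nonemptiness of basic loci (Viehmann--Wedhorn \cite{VW13}, which treats all PEL data of type A with unramified $p$). At such a point the isocrystal of $A[v^\infty]$ is isoclinic of slope $2/n$, so it is isogenous to $\mathbb H$ over $k=\overline{\mathbf F}_p$. I would fix a quasi-isogeny $\rho_0:\mathbb H\dashrightarrow A_{x_0}[v^\infty]$. After modifying $x_0$ by a prime-to-$p$ isogeny and changing $\rho_0$, I would adjust so that $\rho_0$ is an isomorphism. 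This uses Dieudonn\'e lattice modification: a $p$-isogeny on $A[v^\infty]$ with dual isogeny on the $\bar v$-part preserves principality of the polarization at $p$. This step requires $k$ algebraically closed, as the appendix notes.

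Third, I would construct $\kappa$ via the Rapoport--Zink uniformization map (RZ, Chapter 6). Given $(\mathcal H_S,\rho)\in\mathfrak M(S)$, define $A_S$ as the unique abelian scheme quasi-isogenous to $A_{x_0}\times_k\bar S$ whose $p$-divisible group is $\mathcal H_S\times\mathcal H_S^\vee$. Concretely, the quasi-isogeny $\rho\times(\rho^\vee)^{-1}$ applied to $A_{x_0}[p^\infty]$ gives an abelian scheme over $\bar S$. Serre--Tate and Grothendieck--Messing then lift it over the $p$-nilpotent thickening $S$, together with the $\mathcal O_E$-action. The polarization transports because the product $\mathcal H\times\mathcal H^\vee$ carries the standard hyperbolic principal form. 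The quasi-isogeny is an isomorphism away from $p$, so the prime-to-$p$ level structure transports unchanged and the polarization degree stays prime to $p$. Compatibility with base change and with isomorphisms in $\mathfrak M(S)$ is built into the construction.

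I expect the main obstacle to be the signature/Kottwitz condition. One must check that $\operatorname{Lie}(\mathcal H_S\times\mathcal H_S^\vee)$ satisfies the determinant condition for signature $(n-2,2)$ over arbitrary $p$-nilpotent $S$, including $p=2$. Because $p$ splits in $E$, the condition separates into rank statements on the two factors. Those follow from $\dim\mathcal H=n-2$ and $\dim\mathcal H^\vee=2$, so the obstacle should be a bookkeeping matter rather than a genuine obstruction. I would verify it directly in the split setting rather than cite results stated for $p>2$.
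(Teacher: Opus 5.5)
Your proposal is correct. Its core is the paper's construction: double $\rho$ to $\widetilde\rho=\rho\times(\rho^\vee)^{-1}$, clear denominators, take the quotient abelian scheme over $\bar S$, transport polarization and prime-to-$p$ level through the quasi-isogeny, lift by Serre--Tate, and check the determinant condition on the two idempotent factors.

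The genuine difference is how you produce the framing point.
\begin{itemize}
\item The paper uses nonemptiness of central leaves (Viehmann--Wedhorn, Theorem~10.2) to get a point with $A_0[p^\infty]\simeq\mathbb H\times\mathbb H^\vee$ outright. It then passes to an integral representative via Lan's Proposition~1.4.3.4, and must remove a similitude scalar $u\in\Zp^\times$ using $\diag(u^{-1},1)$.
\item You need only a basic point plus Dieudonn\'e--Manin over $k$, which is a weaker input. Because your identification $A[\bar v^\infty]\simeq A[v^\infty]^\vee$ is defined by the polarization itself, the doubled quasi-isogeny preserves the polarization exactly, and no scalar correction arises.
\end{itemize}
Your adjustment step is just the case $S=\Spec k$ of your third step, so it can be skipped. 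You can run the construction of $\kappa$ directly from $A_{x_0}$ with the composite quasi-isogeny, as in Rapoport--Zink uniformization from any base point of the isogeny class. The prime-to-$p$ isogeny changes nothing at $p$.

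What your sketch defers, rather than avoids, is the content of \cref{lem:qe} and \cref{lem:denominator}. Rapoport--Zink work with the $\mathbf Z_{(p)}$-isogeny formulation. Landing in Lan's integral scheme requires three further things:
\begin{itemize}
\item showing that $q_e$ is an honest polarization of degree prime to $p$ (integrality by factoring through $[p^m]$, ampleness by descent along $f_e$, with no inversion of $2$);
\item normalizing the level as $p^{-e}f_e\eta_0$ so that it is independent of the clearing exponent $e$;
\item gluing the local constructions.
\end{itemize}
That, not the determinant condition, is where real care is needed uniformly in $p$.

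Three smaller corrections.
\begin{itemize}
\item With $p$ split and $n$ odd there are no supersingular points. The $v$-part has height $n$ and dimension $n-2$, so its slopes average to $2/n$ or $(n-2)/n$ depending on convention, never $1/2$. The Honda--Tate ``supersingular point'' alternative is therefore wrong, and you must rely on nonemptiness of the basic locus.
\item No auxiliary local adjustment is needed for the hermitian space: a diagonal form of signature $(n-2,2)$ already works. The paper's explicit $E=\mathbf Q(\sqrt{1-4p})$ also makes Lan's good-prime condition checkable for every $p$, including integral alternation of $\psi$ at $2$.
\item Grothendieck--Messing is unnecessary, since $\mathcal H_S$ is already given over $S$ and Serre--Tate alone lifts the abelian scheme.
\end{itemize}
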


\subsection{A global datum with good reduction at $p$}
Put
\[
\vartheta=\sqrt{1-4p},\qquad E=\mathbf Q(\vartheta),\qquad \theta=\frac{1+\vartheta}{2},\qquad \mathcal O=\mathcal O_E.
\]
The element $\theta$ satisfies $T^2-T+p=0$, whose reduction modulo $p$ is $T(T-1)$ with distinct roots, also for $p=2$.  Hence $p$ splits in $E$.  Moreover,
\[
\disc(\mathbf Z[\theta])=1-4p=[\mathcal O:\mathbf Z[\theta]]^2\disc(\mathcal O),
\]
so both the index and the discriminant of $\mathcal O$ are $p$-adic units.

Let $V_{\mathrm{PEL}}=E^n$ and equip it with the Hermitian form
\[
h_E(x,y)=\sum_{i=1}^{n-2}x_i\bar y_i-x_{n-1}\bar y_{n-1}-x_n\bar y_n.
\]
Define $\psi(x,y)=\Tr_{E/\mathbf Q}(\vartheta h_E(x,y)).$ Because $h_E(x,x)\in\mathbf Q$ and $\Tr_{E/\mathbf Q}(\vartheta)=0$, the form $\psi$ is alternating.  Take the lattice $\Lambda_{\mathrm{PEL}}=\mathcal O^n$.  Since $p$ is split and prime to $\disc(\mathcal O)$ and $\vartheta$ is a $p$-adic unit, the trace pairing is perfect at $p$; hence $\psi$ is perfect on $\Lambda_{\mathrm{PEL}}\otimes\Zp$.

At the complex embedding for which $\vartheta$ is positive imaginary, choose the complex structure acting by $i$ on the first $n-2$ coordinates and by $-i$ on the last two.  Then $\psi(x,Jx)>0$ for $x\ne0$, and the resulting PEL datum has signature $(n-2,2)$.  Since $p$ splits in $E$, $G_{\Qp}\simeq\mathrm{GL}_{n,\Qp}\times\Gm.$ The reflex field is $E$.  Choose the place above $p$ corresponding to an embedding $\tau:\mathcal O_E\hookrightarrow\Zp$, and put $\bar\tau(a):=\tau(\bar a)$.  The determinant condition is
\[
\det(T-\iota(a)\mid\Lie A)
=(T-\tau(a))^{n-2}(T-\bar\tau(a))^2
\qquad(a\in\mathcal O_E).
\tag{A.3}\label{eq:detcondition}
\]

Write $\operatorname{Disc}=\disc(\mathcal O_E)$ and let $\Lambda_{\mathrm{PEL}}^\#$ be the dual lattice for $\psi$.  The datum is of type A, so $I_{\mathrm{bad}}=1$ by \cite[Definition~1.2.1.18]{Lan21}.  The discriminant computation above gives $p\nmid\operatorname{Disc}$, and the perfectness of $\psi$ at $p$ gives $p\nmid[\Lambda_{\mathrm{PEL}}^\#:\Lambda_{\mathrm{PEL}}]$.  Choose $N_0\ge3$ prime to $p\operatorname{Disc}[\Lambda_{\mathrm{PEL}}^\#:\Lambda_{\mathrm{PEL}}]$ and a compact open prime-to-$p$ level contained in principal level $N_0$.  It is neat by \cite[Remark~1.4.1.9]{Lan21}.  Thus
\[
p\nmid N_0I_{\mathrm{bad}}\operatorname{Disc}[\Lambda_{\mathrm{PEL}}^\#:\Lambda_{\mathrm{PEL}}],
\]
which is precisely the good-prime condition of \cite[Definition~1.4.1.1]{Lan21}.  For $p=2$, the construction gives $E=\mathbf Q(\sqrt{-7})$, $\operatorname{Disc}=-7$, an odd lattice index, and odd $N_0$.  Moreover $\psi(x,x)=0$ holds integrally, so its reduction at $2$ is alternating.

By \cite[Theorem~1.4.1.11 and Corollary~7.2.3.10]{Lan21}, the resulting integral PEL moduli problem of neat level is represented by a smooth quasi-projective scheme with a universal abelian scheme.  Base change along $\mathcal O_E\otimes\mathbf Z_{(p)}\xrightarrow{\tau}\Zp\longrightarrow W(k)$ gives $\mathscr S/W(k)$.

\subsection{An exactly polarized framing point}
Set $\mathbb X:=\mathbb H\times\mathbb H^\vee.$ The two idempotents of $\mathcal O_E\otimes\Zp\simeq\Zp\times\Zp$ act on the two factors.  Give $\mathbb X$ the standard hyperbolic principal quasi-polarization, characterized under the bidual identification by the evaluation pairing between $\mathbb H$ and $\mathbb H^\vee$.  In the usual matrix convention it is
\[
\lambda_{\mathbb X}=\begin{pmatrix}0&1\\-1&0\end{pmatrix}.
\]
The two Lie ranks are $n-2$ and $2$, so the determinant condition is exactly \eqref{eq:detcondition}.

Thus $(\mathbb X,\iota,\lambda_{\mathbb X})$ satisfies \cite[Definition~1.11]{VW13} for the PEL datum just constructed.  The reflex place is split, so its residue field is $\mathbf F_p$.  Since $k=\overline{\mathbf F}_p$, the algebraic-closedness hypothesis of \cite[Theorem~10.2]{VW13} is satisfied.  That theorem gives a point over $k$ whose $p$-divisible group, with its endomorphism action and polarization class, is isomorphic to $\mathbb X$.  The datum has type A, with local type $(AL)$, and hence has no excluded type D factor.  The theorem applies to every prime; the type $(AL)$ deformation argument at $2$ is also discussed in \cite[Remark~9.4]{VW13}.

Applying \cite[Proposition~1.4.3.4]{Lan21} over $\Spec k$ gives an integral representative $(A_0,\lambda_0,\iota_0,\eta_0)\in\mathscr S(k)$, with an actual $\mathcal O_E$-action, a polarization of degree $d_0$ prime to $p$, and integral prime-to-$p$ level.  This comparison uses a prime-to-$p$ quasi-isogeny and therefore preserves the isomorphism class of the $p$-divisible group with its polarization class.

Initially one obtains an $\mathcal O_E$-linear isomorphism $\zeta_0:\mathbb X\xrightarrow{\sim}A_0[p^\infty]$ with $\zeta_0^*\lambda_0[p^\infty]=u\lambda_{\mathbb X},\; u\in\Zp^\times,$ because the local PEL polarization is recorded up to $\Zp^\times$-similitude.  Let $c=\diag(u^{-1},1)$ on $\mathbb H\times\mathbb H^\vee$.  For the hyperbolic form, a diagonal automorphism $\diag(a,b)$ has multiplier $ab$, so $c^*\lambda_{\mathbb X}=u^{-1}\lambda_{\mathbb X}.$ Replacing $\zeta_0$ by $\zeta=\zeta_0c$ gives the exact identity
\[
\zeta^*\lambda_0[p^\infty]=\lambda_{\mathbb X}.
\tag{A.4}\label{eq:exactpolar}
\]

\subsection{Clearing denominators and taking the quotient}
Let $S$ be $p$-nilpotent over $W(k)$, put $\bar S=S\times_{W(k)}k$, and take $(\mathcal H_S,\rho)\in\mathfrak M(S)$.  Set $\mathbb X_S=\mathcal H_S\times\mathcal H_S^\vee.$ The doubled quasi-isogeny
\[
\widetilde\rho
=\rho\times(\rho^\vee)^{-1}:
\mathbb X_{\bar S}\dashrightarrow\mathbb X_S|_{\bar S}
\]
has height zero and preserves the hyperbolic quasi-polarization.

Zariski locally on $\bar S$, choose $e\ge0$ so that
\[
a_e=p^e\widetilde\rho\,\zeta^{-1}:
A_{0,\bar S}[p^\infty]\longrightarrow\mathbb X_S|_{\bar S}
\]
is an isogeny.  Let $K_e=\ker(a_e)$.  It is finite locally free and stable under $\mathcal O_E$.  Form the quotient
\[
f_e:A_{0,\bar S}\longrightarrow A_e:=A_{0,\bar S}/K_e.
\tag{A.5}
\]
By the finite-flat quotient theorem for abelian schemes, applied after noetherian approximation, $A_e$ is an abelian scheme; see the quotient construction in \cite[proof of Theorem~1.2.1]{Kat81}.  The isogeny $a_e$ factors uniquely through an isomorphism $\epsilon_e:A_e[p^\infty]\xrightarrow{\sim}\mathbb X_S|_{\bar S}.$ The $\mathcal O_E$-action descends through $f_e$.

\begin{lemma}\label{lem:qe}
There is a unique polarization $q_e:A_e\longrightarrow A_e^\vee$ satisfying
\[
f_e^\vee q_e f_e=p^{2e}\lambda_0.
\tag{A.6}\label{eq:qe}
\]
It has degree $d_0$, is compatible with the descended $\mathcal O_E$-action under Rosati, and its $p$-divisible realization corresponds under $\epsilon_e$ to the standard hyperbolic principal quasi-polarization of $\mathbb X_S$.
\end{lemma}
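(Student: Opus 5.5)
The natural approach is the standard descent of a polarization through an isogeny: a homomorphism $\mu:A_{0,\bar S}\to A_{0,\bar S}^\vee$ descends along $f_e$ exactly when $\ker f_e=K_e$ is contained in $\ker\mu$ and is isotropic for the Weil pairing $e_\mu$ on $\ker\mu$. We take $\mu=p^{2e}\lambda_0$, whose kernel contains $A_0[p^{2e}]$. The proof then has four steps.

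\textbf{Step 1: $K_e\subset\ker\mu$.} Since $a_e=p^e\widetilde\rho\,\zeta^{-1}$ and $\widetilde\rho$ is a height-zero quasi-isogeny, we have $K_e\subset A_0[p^{2e}]$. To see this, write $\widetilde\rho=p^{-e}a_e\zeta$. Then $p^{e}\widetilde\rho^{-1}$ is an isogeny as well (possibly after enlarging $e$ Zariski locally, which is harmless). Since $\widetilde\rho^{-1}a_e=p^e\zeta^{-1}$, the kernel of $a_e$ is killed by $p^{2e}$. So $K_e\subset\ker(p^{2e}\lambda_0)$.

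\textbf{Step 2: isotropy of $K_e$.} This is the key point. We transport the question through $\zeta$ using \eqref{eq:exactpolar}. On $\mathbb X$ the pairing is the hyperbolic one, and $\widetilde\rho$ preserves the hyperbolic quasi-polarization, i.e.\ $\widetilde\rho^\vee\lambda_{\mathbb X_S}\widetilde\rho=\lambda_{\mathbb X}$. It follows that
\[
a_e^\vee\lambda_{\mathbb X_S}a_e=p^{2e}(\zeta^{-1})^\vee\lambda_{\mathbb X}\zeta^{-1}=p^{2e}\lambda_0[p^\infty].
\]
Here we use the convention $\zeta^*\lambda_0[p^\infty]=\zeta^\vee\lambda_0\zeta=\lambda_{\mathbb X}$. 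This identity says that $p^{2e}\lambda_0[p^\infty]$ factors through $a_e$ via the principal $\lambda_{\mathbb X_S}$. Equivalently, $K_e=\ker a_e$ is the kernel of a factorization of the form $a_e^\vee\circ(\text{iso})\circ a_e$, and the standard criterion (Mumford, or the $p$-divisible-group version) then gives a unique $q_e[p^\infty]$ on $A_e[p^\infty]$ with $\epsilon_e^*\lambda_{\mathbb X_S}=q_e[p^\infty]$. The prime-to-$p$ part is untouched because $K_e$ is $p$-primary. Combining the two parts, via the decomposition of $\Hom(A_e,A_e^\vee)\otimes\mathbf Z_{(p)}$ into $\ell$-adic pieces, or more directly by applying the descent lemma for isogenies of abelian schemes (\cite{Kat81}, or Mumford's Abelian Varieties \S23), produces $q_e$ with \eqref{eq:qe}. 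Uniqueness follows because $f_e$ is an epimorphism and $f_e^\vee$ is injective on $\Hom$.

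\textbf{Step 3: degree.} Take degrees in \eqref{eq:qe}: $\deg(f_e)^2\deg(q_e)=p^{4eg}d_0$, where $2g=\dim A_0$, and $\deg f_e=|K_e|$. At the $p$-part, $q_e[p^\infty]$ is principal because $\lambda_{\mathbb X_S}$ is. The prime-to-$p$ part of $\deg q_e$ equals that of $\lambda_0$, namely $d_0$. Hence $\deg q_e=d_0$.

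\textbf{Step 4: Rosati compatibility and positivity.} The descended action satisfies $\iota_e(a)f_e=f_e\iota_0(a)$. We then compute $f_e^\vee\,\iota_e(\bar a)^\vee q_e\,f_e=p^{2e}\iota_0(\bar a)^\vee\lambda_0$, and by the Rosati condition for $\lambda_0$ this equals $p^{2e}\lambda_0\iota_0(a)=f_e^\vee q_e\iota_e(a)f_e$. Cancelling $f_e$ on the right and $f_e^\vee$ on the left gives $q_e\iota_e(a)=\iota_e(\bar a)^\vee q_e$. Positivity (that $q_e$ is a polarization and not merely symmetric) follows because $f_e^*q_e$ is the polarization $p^{2e}\lambda_0$ and pullback along isogenies detects ampleness. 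Fibrewise symmetry descends in the same way.

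The main obstacle is Step 2, namely the descent criterion over a general, possibly non-noetherian, base $\bar S$. I would handle this by noetherian approximation as in the construction of $A_e$, and by phrasing isotropy via the factorization identity rather than via Weil pairings pointwise.
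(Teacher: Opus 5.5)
Your argument is correct in substance, but it is organized as a Mumford-style descent (kernel containment plus isotropy), and the paper checks neither condition. The paper defines $q_e=(f_e^\vee)^{-1}p^{2e}\lambda_0f_e^{-1}$ directly as a quasi-homomorphism in the category of abelian schemes up to $p$-isogeny. By (A.4), its $p$-divisible realization is $\epsilon_e^\vee\lambda_{\mathbb X_S}\epsilon_e$; this is exactly the content of your identity $a_e^\vee\lambda_{\mathbb X_S}a_e=p^{2e}\lambda_0[p^\infty]$. The paper then proves integrality by noting that $p^mq_e$ kills $A_e[p^m]$, so it factors through $[p^m]$. That route needs no Weil pairing, no separate Step 1, and no noetherian approximation, only the universal property of an fppf quotient.

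Your route can be made equally clean, but not through the first option you offer. The morphism $f_e$ is not invertible in $\Hom(A_e,A_e^\vee)\otimes\mathbf Z_{(p)}$; the right localization is $\mathbf Z[1/p]$. Also, local $\ell$-adic and $p$-adic pieces do not by themselves assemble into a homomorphism of abelian schemes. You first need a quasi-homomorphism, and then integrality becomes a $p$-adic question.

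The workable form of your ``descent lemma'' is a double factorization:
\begin{itemize}
\item Your identity already shows that $p^{2e}\lambda_0$ kills $K_e$, so Step 1 is redundant. This gives $p^{2e}\lambda_0=gf_e$.
\item Cancelling the fppf epimorphism $f_e[p^\infty]$ in the identity gives $g[p^\infty]=f_e[p^\infty]^\vee\epsilon_e^\vee\lambda_{\mathbb X_S}\epsilon_e$.
\item Hence $g^\vee$ also kills $K_e$, so $g^\vee=hf_e$, and $q_e:=h^\vee$ satisfies (A.6).
\end{itemize}
This is what ``isotropy'' amounts to here. It works over any base, so the obstacle you flag in your last paragraph does not actually arise.

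Two smaller points. In Step 1 no enlargement of $e$ is needed, and it would not be legitimate anyway, since the lemma concerns the given $e$. Because $\widetilde\rho$ preserves the principal hyperbolic form, $p^e\widetilde\rho^{-1}=\lambda_{\mathbb X}^{-1}(p^e\widetilde\rho)^\vee\lambda_{\mathbb X_S}$ is automatically an isogeny.

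In Step 3 the exponent is wrong. Here $\dim A_0=n$, so $\deg(p^{2e}\lambda_0)=p^{4ne}d_0$, not $p^{4eg}d_0$ with $2g=\dim A_0$. Your conclusion does not depend on this. You split $\deg q_e$ into a trivial $p$-part, since $q_e[p^\infty]$ is an isomorphism, and a prime-to-$p$ part equal to $d_0$; that argument is valid. The paper instead computes $\deg f_e=p^{2ne}$ from $\htg\widetilde\rho=0$.

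Your Rosati, symmetry, positivity and uniqueness arguments match the paper's.
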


\begin{proof}
In the category of abelian schemes up to $p$-isogeny define $q_e=(f_e^\vee)^{-1}p^{2e}\lambda_0f_e^{-1}.$ By \eqref{eq:exactpolar} and the definition of $a_e$, its rational $p$-divisible realization is exactly the integral hyperbolic principal quasi-polarization transported through $\epsilon_e$.

Choose $m\ge0$ such that $u_e=p^mq_e$ is an actual abelian-scheme homomorphism.  On $p$-divisible groups, $u_e$ equals $p^m$ times an integral homomorphism, and therefore kills $A_e[p^m]$.  Since $[p^m]:A_e\to A_e$ is the quotient by $A_e[p^m]$, the universal property of the quotient gives a unique actual homomorphism $q_e'$ with $u_e=q_e'\circ[p^m].$ In $\Hom(A_e,A_e^\vee)[1/p]$ one has $q_e'=q_e$.  Thus $q_e$ is integral.

Symmetry and Rosati compatibility follow from \eqref{eq:qe}, using that homomorphism groups of abelian schemes have no $p$-torsion.  For a symmetric homomorphism $q:A\to A^\vee$, set $\mathcal L(q):=(\id_A,q)^*\mathcal P_A$, where $\mathcal P_A$ is the Poincar\'e bundle.  Then $\lambda_{\mathcal L(q)}=q+q^\vee=2q$; see \cite[proof of Proposition~1.3.2.14]{Lan21}.  By \cite[Proposition~1.3.2.15]{Lan21}, $q$ is a polarization if and only if $\mathcal L(q)$ is relatively ample.  Poincar\'e functoriality \cite[Lemma~1.3.2.10]{Lan21} and \eqref{eq:qe} give
\[
f_e^*\mathcal L(q_e)
\simeq\mathcal L(f_e^\vee q_e f_e)
=\mathcal L(p^{2e}\lambda_0)
\simeq\mathcal L(\lambda_0)^{\otimes p^{2e}}.
\]
Since $\lambda_0$ is a polarization, this line bundle is relatively ample.  Relative ampleness descends along the finite locally free surjection $f_e$; see \cite[proof of Corollary~1.3.2.21]{Lan21}.  The criterion therefore shows that $q_e$ is a polarization.  It applies over arbitrary bases by the approximation argument in \cite[proof of Proposition~1.3.2.15]{Lan21} and requires no~inversion~of~$2$.

The doubled group has height $2n$ and $\widetilde\rho$ has height zero, hence $\deg f_e=p^{2ne}$.  Since $\dim A_0=n$,
\[
(\deg f_e)^2\deg q_e=\deg(p^{2e}\lambda_0)=p^{4ne}d_0,
\]
and therefore $\deg q_e=d_0$.  Uniqueness follows by cancelling $f_e$ in the $p$-isogeny category and then using torsion-freeness.
\end{proof}

Transport the chosen prime-to-$p$ level by
\[
\eta_e=p^{-e}f_e\eta_0.
\tag{A.7}
\]
The scalar $p^{-e}$ is a unit on every prime-to-$p$ Tate module, so $\eta_e$ is again integral.  Equation \eqref{eq:qe} shows that its polarization multiplier is $1$.  Consequently $(A_e,q_e,\iota_e,\eta_e)$ is an object of the fixed integral PEL moduli problem.

\subsection{Independence of the denominator and descent}

\begin{lemma}\label{lem:denominator}
The locally constructed polarized PEL objects, together with the identifications $\epsilon_e$, are canonically independent of the choice of clearing exponent $e$ and glue over $\bar S$.  The gluing is compatible with arbitrary base change.
\end{lemma}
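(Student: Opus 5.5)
The plan is to compare the constructions for two exponents $e$ and $e'=e+1$ directly, and then deduce the general case by induction. For $e'=e+1$ one has $a_{e+1}=p\,a_e$. Hence $K_{e+1}=\ker(p a_e)=a_e^{-1}(\mathbb X_S[p]|_{\bar S})$ contains $K_e$, and $K_{e+1}/K_e\simeq \mathbb X_S[p]|_{\bar S}$ via $a_e$. Equivalently, $K_{e+1}=f_e^{-1}(A_e[p])$.

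The universal property of quotients then gives a canonical isomorphism
\[
g_{e}:A_{e+1}=A_{0,\bar S}/K_{e+1}\xrightarrow{\sim}A_e/A_e[p]\xrightarrow[\sim]{[p]^{-1}} A_e .
\]
It is characterized by $g_e\circ f_{e+1}=f_e$. The second arrow inverts the isomorphism $A_e/A_e[p]\to A_e$ induced by $[p]$. Since $f_e$ and $f_{e+1}$ are $\mathcal O_E$-linear and $K_e,K_{e+1}$ are $\mathcal O_E$-stable, $g_e$ is $\mathcal O_E$-linear.

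Next we check that $g_e$ respects the remaining data, one item at a time.

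For $\epsilon$: we have $\epsilon_{e+1}\circ f_{e+1}[p^\infty]=a_{e+1}=p\,a_e=p\,\epsilon_e f_e[p^\infty]$. Rewriting $f_e$ through $g_e$ and $[p]$ shows $\epsilon_e\circ g_e=\epsilon_{e+1}$. This bookkeeping of which factor of $p$ sits where needs care, and it fixes the normalization of $g_e$.

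For the polarization: $f_{e+1}^\vee\,(g_e^\vee q_e g_e)\,f_{e+1}$ must equal $p^{2e+2}\lambda_0$. This follows from \eqref{eq:qe} for $q_e$ together with the factor $[p]$, since $[p]^\vee q_e[p]=p^2q_e$. The uniqueness in \cref{lem:qe} then gives $g_e^\vee q_e g_e=q_{e+1}$.

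For the level structure: the factor $p^{-1}$ in \eqref{eq:qe}-compatible transport \eqref{(A.7)} matches the $[p]$ in $g_e$, so $g_e\eta_{e+1}=\eta_e$ on prime-to-$p$ Tate modules.

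Composing the $g_e$ gives canonical isomorphisms $g_{e,e'}$ for all $e\le e'$. These satisfy the cocycle condition $g_{e,e'}g_{e',e''}=g_{e,e''}$ by uniqueness in the universal property of quotients. Since $\mathcal O_E$-linear isomorphisms of the objects are determined on $p$-divisible groups up to rigidity, the isomorphisms are in fact unique: the identifications $\epsilon$ pin them down, as $\Hom$ of abelian schemes injects into $\Hom$ of $p$-divisible groups over the connected components of $\bar S$ by rigidity.

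For gluing, cover $\bar S$ Zariski locally by opens $W_\alpha$ with admissible exponents $e_\alpha$. On $W_\alpha\cap W_\beta$, use $\max(e_\alpha,e_\beta)$ and the isomorphisms $g$ to identify the two objects. The cocycle condition on triple overlaps follows from the uniqueness just noted. Zariski descent for abelian schemes with polarization, action, level and $\epsilon$ (all of it sheaf data, with abelian schemes forming a Zariski stack) then produces the glued object over $\bar S$.

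Compatibility with base change $\bar S'\to\bar S$ also follows. Kernels of isogenies of finite locally free group schemes and quotients by them commute with base change, so $(A_e)_{\bar S'}$ is the quotient for the pulled-back data with the same $e$. Each $g_e$ is characterized by a universal property and therefore pulls back to the corresponding $g_e$.

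The main obstacle is the $\epsilon$/$\eta$ normalization in the step $e\to e+1$, namely making sure the same $[p]$ appears consistently. I would settle it by working in the $p$-isogeny category, where all maps are determined by $f_e=p^{-e}$-scaled versions of $\widetilde\rho\zeta^{-1}$ and the claims become identities.
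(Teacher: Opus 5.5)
The student proposal takes the same route as the paper, but the normalization it states for $g_e$ is false. This is exactly the step the proposal itself flags as the main obstacle.

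Your route is the paper's. You identify $K_{e+1}/K_e$ with $A_e[p]$ via $\epsilon_e$ and get an isomorphism $A_{e+1}\xrightarrow{\sim}A_e$ from the quotient construction. You then check $q$, $\eta$, $\epsilon$ by cancelling the epimorphism $f_{e+1}$, obtain the cocycle condition from uniqueness, and glue Zariski locally. The paper does this directly for arbitrary $d\ge e$ rather than in unit steps, which is immaterial.

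\textbf{The error.} No isomorphism can satisfy $g_e\circ f_{e+1}=f_e$:
\begin{itemize}
\item The left side kills $K_{e+1}$, whereas $\ker f_e=K_e$ and $K_{e+1}/K_e\simeq\mathbb X_S[p]|_{\bar S}$ is nonzero.
\item Equivalently, $\deg f_{e+1}=p^{2n(e+1)}\ne p^{2ne}=\deg f_e$.
\end{itemize}
Your description of the second arrow is also directionally inconsistent: the inverse of an isomorphism $A_e/A_e[p]\to A_e$ goes from $A_e$ to $A_e/A_e[p]$.

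\textbf{The fix.} The correct map is $A_{e+1}\simeq A_e/A_e[p]\xrightarrow{\sim}A_e$, with the second arrow \emph{induced by} $[p]$. It is characterized by $g_e\circ f_{e+1}=[p]\circ f_e$, which is the paper's identity (A.8) with $d=e+1$. In the isogeny category this reads $g_e\circ(p^{-e-1}f_{e+1})=p^{-e}f_e$. The $e$-independent object is therefore $p^{-e}f_e$, which corresponds under $\epsilon_e$ to $\widetilde\rho\zeta^{-1}$, because $\epsilon_e\circ f_e[p^\infty]=a_e=p^e\widetilde\rho\zeta^{-1}$. So your closing remark has the scaling backwards: $f_e$ is a $p^{e}$-multiple, not a $p^{-e}$-multiple.

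\textbf{Why this matters for your checks.} With the characterization as you wrote it, each compatibility check fails:
\begin{itemize}
\item One gets $\epsilon_eg_ef_{e+1}=a_e$ while $\epsilon_{e+1}f_{e+1}=a_{e+1}=pa_e$. This forces $\epsilon_{e+1}=[p]\circ\epsilon_e\circ g_e$, which is not an isomorphism.
\item The polarization check produces $p^{2e}\lambda_0$ instead of $p^{2e+2}\lambda_0$.
\item The level check gives $g_e\eta_{e+1}=p^{-1}\eta_e$.
\end{itemize}
Your computations in fact silently use the corrected identity, since each inserts an extra $[p]$. With it they become exactly the paper's:
\begin{itemize}
\item $\epsilon_eg_ef_{e+1}=[p]a_e=\epsilon_{e+1}f_{e+1}$.
\item $f_{e+1}^\vee g_e^\vee q_eg_ef_{e+1}=f_e^\vee[p]^\vee q_e[p]f_e=p^{2e+2}\lambda_0$, followed by the uniqueness in \cref{lem:qe}.
\item $g_e\eta_{e+1}=p^{-e-1}[p]f_e\eta_0=\eta_e$.
\end{itemize}
Once $g_e$ is renormalized this way, the remaining steps go through as in the paper. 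These are the cocycle condition from uniqueness, Zariski gluing, and base change of kernels and quotients. For the cocycle, your rigidity remark is an acceptable alternative to cancelling the epimorphism $f_{e''}$ in $g_{e,e'}g_{e',e''}f_{e''}=[p^{e''-e}]f_e=g_{e,e''}f_{e''}$.
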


\begin{proof}
Suppose $d\ge e$ are two allowed exponents.  Then $a_d=[p^{d-e}]a_e$, and under $\epsilon_e$ one has $K_d/K_e=A_e[p^{d-e}].$ Hence multiplication by $p^{d-e}$ induces a unique isomorphism $j_{d,e}:A_d\xrightarrow{\sim}A_e$ satisfying
\[
j_{d,e}f_d=[p^{d-e}]f_e.
\tag{A.8}
\]
It is $\mathcal O_E$-linear.  Pulling back \eqref{eq:qe} gives
\[
f_d^*j_{d,e}^*q_e=[p^{d-e}]^*f_e^*q_e=p^{2d}\lambda_0=f_d^*q_d,
\]
so $j_{d,e}^*q_e=q_d$.  Similarly $j_{d,e}\eta_d=\eta_e$.  The $p$-divisible-group identifications are compatible as well: using $\epsilon_e\circ f_e[p^\infty]=a_e$, we obtain
\[
(\epsilon_e\circ j_{d,e}[p^\infty])\circ f_d[p^\infty]
=[p^{d-e}]\circ a_e
=a_d
=\epsilon_d\circ f_d[p^\infty].
\]
Since the isogeny $f_d[p^\infty]$ is an epimorphism of fppf sheaves, cancellation gives $\epsilon_e\circ j_{d,e}[p^\infty]=\epsilon_d$.

For $r\ge d\ge e$, uniqueness gives the cocycle identity $j_{d,e}j_{r,d}=j_{r,e}.$ On overlaps choose a common larger exponent and use the resulting isomorphisms.  The cocycle condition gives canonical Zariski descent, including the identifications $\epsilon_e$.  Since schemes, morphisms, finite \'etale level structures, smoothness, properness, and group-scheme operations descend effectively, we obtain a global object $(\bar A,\bar q,\bar\iota,\bar\eta)$ over $\bar S$ together with
\[
\bar\epsilon:\bar A[p^\infty]\xrightarrow{\sim}\mathbb X_S|_{\bar S}.
\tag{A.9}
\]
All quotient constructions commute with base change, so the descent is functorial.
\end{proof}

\subsection{Serre--Tate lifting and the formal morphism}
The closed immersion $\bar S\hookrightarrow S$ is defined by the nilpotent ideal $p\mathcal O_S$.  Applying the Serre--Tate equivalence \cite[Theorem~1.2.1]{Kat81} on affine opens and gluing by uniqueness lifts $(\bar A,\bar\epsilon)$ to an abelian scheme $A/S$ together with
\[
\epsilon:A[p^\infty]\xrightarrow{\sim}\mathbb X_S.
\tag{A.10}
\]
The $\mathcal O_E$-action lifts uniquely by full faithfulness.  The polarization $\bar q$ and its prescribed $p$-divisible realization lift uniquely to a homomorphism $q:A\to A^\vee$.  Symmetry and Rosati compatibility lift as equalities of homomorphisms, and fibrewise positivity shows that $q$ is a polarization of degree $d_0$.  Prime-to-$p$ torsion is finite \'etale and unchanged by nilpotent thickening, so the integral level structure lifts uniquely as well.

Finally, $\Lie A\simeq\Lie\mathcal H_S\oplus\Lie\mathcal H_S^\vee$ has ranks $n-2$ and $2$ under the two idempotents, and $a\in\mathcal O_E$ acts by the corresponding scalars $\tau(a)$ and $\bar\tau(a)$.  Thus the determinant condition \eqref{eq:detcondition} holds over the possibly nonreduced test scheme.

The construction is functorial in $S$ and in morphisms of framed deformations.  We therefore obtain a natural transformation of represented functors, and formal Yoneda gives
\[
\kappa:\mathfrak M\longrightarrow\widehat{\mathscr S}.
\tag{A.11}
\]
This proves \cref{thm:PEL}.

\subsection{Integral Hodge generators}
Let $\omega_{\mathrm{ab}}=\det\omega_{\mathcal A/\mathscr S}.$ The integral $p$-divisible-group identification and the Hodge sequences give
\[
\kappa^*\omega_{\mathrm{ab}}
=\det(\omega_{\mathcal H}\oplus\omega_{\mathcal H^\vee})
=\lambda^2(\det D)^{-1}.
\tag{A.12}
\]
Indeed $\omega_{\mathcal H}=L$, $\omega_{\mathcal H^\vee}=(D/L)^\vee$, and $\lambda=\det L$.

By \cite[Theorem~7.2.4.1]{Lan21}, the chosen neat-level integral PEL scheme has a projective minimal compactification on which $\omega_{\mathrm{ab}}$ extends to an ample invertible sheaf.  Since the base is affine and noetherian, a sufficiently large fixed power is generated by finitely many integral global sections.  After base change to $W(k)$, restriction to the $p$-adic completion, and pullback by $\kappa$, we obtain finitely many integral generators
\[
\tau_i\in\Gamma\!\left(\mathfrak M,\lambda^{2a}(\det D)^{-a}\right)
\tag{A.13}
\]
for a single integer $a>0$, independent of height.

On a generic fibre of fixed height, let $\delta$ be the norm-one determinant trivialization of \cref{prop:detnorm}.  Then $\sigma_i:=\tau_i\delta^a\in\Gamma(X,\lambda^{2a}).$ The $\tau_i$ generate the integral line bundle, so in an integral local frame their coefficients generate the unit ideal.  The argument in the proof of \cref{prop:generators} gives
\[
\max_i\|\sigma_i(x)\|=1.
\]
Taking $w=2a$ proves \cref{prop:generators} for every prime, including $p=2$.

\end{appendices}

\end{document}